\numberwithin{equation}{section}
\newtheorem{prop}{Proposition}[section]
\newtheorem{lem}[prop]{Lemma}
\newtheorem{defn}[prop]{Definition}
\newtheorem{thm}[prop]{Theorem}
\newtheorem{rem}[prop]{Remark}
\newtheorem{cor}[prop]{Corollary}
\title{An investigation of stability on certain toric surfaces}
\author{Lars Martin Sektnan}
\address{D\'epartement de math\'ematiques\\
  Universit\'e du Qu\'ebec \`{a} Montr\'eal\\
  Case postale 8888, succursale centre-ville \\
  Montr\'eal (Qu\'ebec)\\
  H3C 3P8 \\
  Canada}
\email{lars.sektnan@cirget.ca}
\begin{document}
\maketitle
\begin{abstract}We investigate the relationship between stability and the existence of extremal K\"ahler metrics on certain toric surfaces. In particular, we consider how log stability depends on weights for toric surfaces whose moment polytope is a quadrilateral. We introduce a space of symplectic potentials for toric manifolds, which induces metrics with mixed Poincar\'e type and cone angle singularities. For quadrilaterals, we give a computable criterion for stability with $0$ weights along two of the edges of the quadrilateral. This in turn implies the existence of a definite log-stable region for generic quadrilaterals. This uses constructions due to Apostolov-Calderbank-Gauduchon and Legendre.

\end{abstract}
\section{Introduction}

The search for canonical metrics such as extremal K\"ahler metrics is a central topic in complex geometry. One of the key conjectures is the Yau-Tian-Donaldson conjecture relating the existence of extremal K\"ahler metrics in the first Chern class of a line bundle to algebro-geometric stability, the predominant stability notion being $K$-stability. There is also a version of this stability notion called relative log $K$-stability. Here one fixes a simple normal crossings divisor $D$ in a complex manifold and attaches non-negative weights to each irreducible component of $D$. For each choice of weights, one gets a different criterion for stability. 

In this article we study how relative log $K$-stability (with respect to toric degenerations) depends on weights, for certain toric surfaces. Toric varieties correspond to Delzant polytopes and the weights can be described by a measure on the boundary of the polytope. The stability condition we are considering therefore depends on the Delzant polytope and the boundary measure. However, this definition works equally well on any bounded convex polytope with such a boundary measure, regardless of whether it is Delzant or not, and we will work in this generality. Allowing any polytope, not just Delzant ones, features in Donaldson's continuity method for extremal metrics on toric varieties, see \cite{donaldson08}. There is also some geometric meaning for such polytopes, as they arise for toric Sasakian manifolds with irregular Reeb vector fields, see e.g. \cite{martellisparksyau06}, \cite{abreu10} and \cite{elegendre11b}.

Log stability is conjectured to be equivalent to the existence of an extremal metric with a mixture of singularities along the divisors corresponding to the facets of the polytope. For non-zero weights, the singularities are cone angle singularities with angle prescribed by the weight. The predominant behaviour along the edges with $0$ weight is expected to be Poincar\'e type singularities. However, we show that this is not the only behaviour one should expect for $0$ weights. 

A key to understanding the Yau-Tian-Donaldson conjecture is to understand what happens when an extremal metric does \textit{not} exist. For toric varieties, Donaldson conjectured in \cite[Conj. 7.2.3]{Don02} that there should be a splitting of the moment polytope into subpolytopes that each are semistable when attaching a $0$ measure to the sides that are not from the original moment polytope. In \cite{szekelyhidi08}, Sz\'ekelyhidi showed that such a splitting exists, under the assumption that the optimal destabilizer is a piecewise linear function. 

The subpolytopes in the splitting should come in two types. If the subpolytopes are in fact stable, they are conjectured to admit complete extremal K\"ahler metrics  on the complement of the divisors corresponding to the edges with vanishing boundary measure, whenever the subpolytopes are Delzant. If they are not stable, they are conjectured to be trapezia with no stable subpolytopes. Trapezia correspond to $\mathbb{CP}^1$-bundles over $\mathbb{CP}^1$, and this corresponds to the collapsing of an $S^1$ in the fibre over each point of this subpolytope, when trying to minimize the Calabi functional.

The work relates to several directions in K\"ahler geometry. Extremal K\"ahler metrics are solutions to a non-linear PDE, and explicit solutions are usually very difficult to find, even if one knows that such a metric exists. By using the constructions of Apostolov-Calderbank-Gauduchon and Legendre, we get explicit solutions to this PDE in \textit{ambitoric coordinates}. Also, the stability condition is often difficult to verify, and we find an easily computable criterion for the stability of a weighted quadrilateral with $2$ weights being $0$.

In general, the boundary measure attaches a non-negative weight to each facet of a polytope. For a quadrilateral $Q$ with weight vanishing along at least two edges, there is therefore a two parameter family of possible weights that we can attach to the remaining two edges, for each choice of edge pairs. Log stability on toric varieties is invariant under scaling of the weights, and so it therefore suffices to consider the weights $k_1, k_2$ such that $k_1 + k_2 = 1$. The main result of the article is the following theorem. See section \ref{genprops} for the conventions in the statement and the description of the numbers $r_0$ and $r_1$. 
\begin{thm}\label{mainquadthm} Let $E_i, E_j$ be two different edges of $Q$ that are not parallel. Then there exists \textnormal{explicit} numbers $0 \leq r_0 < r_1 \leq 1$ such that $(1-r) E_i + r E_j$ is
\begin{itemize} \item stable if $ r \in (r_0,r_1),$ 
\item not stable if $r \in [0,r_0)$ or $r \in (r_1,1]$.
\end{itemize}
Moreover, $(1-r) E_i + r E_j$ is
\begin{itemize} \item stable at $ r_0$ and $r_1$ if $E_i$ and $E_j$ are adjacent, unless $r_0=0$ or $r_1=1$, respectively,
\item not stable at $ r_0$ and $r_1$ if $E_i$ and $E_j$ are opposite.
\end{itemize}
If $E_i$ and $E_j$ are parallel, then $(1-r) E_i + r E_j$ is unstable for all $r \in [0,1]$.
\end{thm}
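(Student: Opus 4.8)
The plan is to reduce stability of $(Q,\sigma_r)$ --- where $\sigma_r$ is the boundary measure carrying weight $1-r$ on $E_i$, weight $r$ on $E_j$, and weight $0$ on the remaining two edges --- to a one-parameter family of one-dimensional inequalities, and then to locate the thresholds $r_0,r_1$ by an explicit finite optimization. Write $\mathcal{L}_r$ for the Donaldson--Futaki functional of $(Q,\sigma_r)$. By the reduction recalled in Section~\ref{genprops}, $(Q,\sigma_r)$ is stable precisely when $\mathcal{L}_r(f)>0$ for every non-affine convex $f$ on $Q$, and it suffices to test on the crease functions $f_\ell=\max(0,\ell)$ with $\{\ell=0\}$ a chord of $Q$.

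The first, structurally decisive, step is to observe that $r\mapsto\mathcal{L}_r(f)$ is \emph{affine} for each fixed $f$. Indeed $\sigma_r$ is affine in $r$, while the extremal affine function $A_{\sigma_r}$, determined by $\int_{\partial Q}\ell\,d\sigma_r=\int_Q A_{\sigma_r}\ell\,dx$ for all affine $\ell$, has coefficient vector $M^{-1}b_r$, where $M=\bigl(\int_Q e_\alpha e_\beta\,dx\bigr)$ is the (invertible, $r$-independent) Gram matrix of $1,x_1,x_2$ on $(Q,dx)$ and $b_r=\bigl(\int_{\partial Q}e_\beta\,d\sigma_r\bigr)$ is affine in $r$; hence $A_{\sigma_r}$, and therefore $\mathcal{L}_r(f)=\int_{\partial Q}f\,d\sigma_r-\int_Q A_{\sigma_r}f\,dx$, is affine in $r$. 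Writing $\alpha_f=\mathcal{L}_0(f)$ and $\beta_f=\mathcal{L}_1(f)$, we get $\mathcal{L}_r(f)=(1-r)\alpha_f+r\beta_f$, so for each non-affine $f$ the set $\{r\in[0,1]:\mathcal{L}_r(f)>0\}$ is a relatively open subinterval of $[0,1]$, which is empty exactly when $\alpha_f\le 0$ and $\beta_f\le 0$. Consequently the stable set $S=\{r:(Q,\sigma_r)\text{ stable}\}$, being an intersection of such subintervals over all non-affine $f$, is an interval, with endpoints $r_0=\max\{0,\sup_f\rho_f^-\}$ and $r_1=\min\{1,\inf_f\rho_f^+\}$ --- inclusion of the endpoints still to be determined --- where $\rho_f^-=-\alpha_f/(\beta_f-\alpha_f)$ runs over $f$ with $\alpha_f<0<\beta_f$ and $\rho_f^+=\alpha_f/(\alpha_f-\beta_f)$ over those with $\beta_f<0<\alpha_f$. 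This already isolates the dichotomy of the theorem: one must show that the existence of a non-affine convex $f$ with $\mathcal{L}_0(f)\le 0$ and $\mathcal{L}_1(f)\le 0$ (equivalently $S=\varnothing$) is equivalent to $E_i\parallel E_j$.

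Next I would make $\alpha_f,\beta_f$ computable. Parametrising chords by the pair of edges they meet --- the four ``corner-cutting'' types through pairs of adjacent edges and the two ``splitting'' types through pairs of opposite edges --- each $\mathcal{L}_0(f_\ell),\mathcal{L}_1(f_\ell)$ becomes an elementary rational expression in the two position parameters of $\ell$ (the Apostolov--Calderbank--Gauduchon and Legendre ambitoric coordinates on $Q$ give a convenient framework here, and also the explicit extremal metrics). The main computational step is then to show that the suprema and infima defining $r_0,r_1$ are attained, or approached, along a short list of \emph{extremal chords} --- chords through a vertex of $Q$ and chords parallel to an edge --- so that $r_0,r_1$ are extrema of explicit one-variable functions; carrying this out yields their closed forms, shows $0\le r_0<r_1\le 1$ when $E_i\not\parallel E_j$, and exhibits the doubly-destabilizing $f$ (hence $S=\varnothing$) when $E_i\parallel E_j$: it is the chord joining the two parallel weighted edges and running transverse to them, which collapses the $S^1$ in the ruling of the associated $\mathbb{P}^1$-bundle (as for the square of $\mathbb{P}^1\times\mathbb{P}^1$, where $\mathcal{L}_r$ vanishes on it for all $r$). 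I expect this reduction to finitely many critical chords, together with the verification that the two one-sided thresholds genuinely leave a gap, to be the main obstacle; the integrals themselves are routine.

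Finally I would settle the endpoint behaviour. Stability at $r_0$ holds iff $\sup_f\rho_f^-$ is not attained by a genuine non-affine crease function --- so that $\mathcal{L}_{r_0}(f)>0$ for all such $f$ --- and $r_0\neq 0$. When $E_i,E_j$ are adjacent, the explicit formulas show the extremizing chords for $\rho_f^-$ degenerate towards a vertex of $Q$, so the limiting ``destabilizer'' is affine (or identically zero) and the supremum is not attained: $(Q,\sigma_{r_0})$ is stable, and symmetrically $(Q,\sigma_{r_1})$ --- the only exceptions being $r_0=0$ (resp.\ $r_1=1$), where $\sigma_0$ (resp.\ $\sigma_1$) is supported on the single edge $E_i$ (resp.\ $E_j$) and is directly checked to be merely semistable, the chord through that edge giving $\mathcal{L}=0$ on a non-affine function. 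When $E_i,E_j$ are opposite, by contrast, the extremal chord for $\rho_f^-$ sits at an interior position --- a genuine chord joining the two remaining edges, each adjacent to both $E_i$ and $E_j$ --- so $\mathcal{L}_{r_0}(f_\ell)=0$ on a non-affine $f_\ell$ and $(Q,\sigma_{r_0})$ is semistable but not stable, and likewise at $r_1$. In the parallel case the doubly-destabilizing $f$ found above has $\mathcal{L}_r(f)\le 0$ for every $r\in[0,1]$ by affinity in $r$, so $(Q,\sigma_r)$ is never stable.
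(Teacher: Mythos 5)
Your first step --- the affinity of $r\mapsto\mathcal{L}_r(f)$ for fixed $f$, hence that the stable set is a subinterval of $[0,1]$ --- is correct and is exactly the content of Lemma \ref{setofstblwtslem}; your treatment of the parallel case via the doubly-destabilizing transverse chord also matches Sz\'ekelyhidi's example as used in the paper. But the remainder of the argument has two genuine gaps, and they sit precisely where the content of the theorem lies. First, you assume at the outset that it suffices to test stability on simple piecewise linear functions $\max(0,\ell)$. No such reduction is available a priori: a convex piecewise linear function has several creases, and positivity of $\mathcal{L}$ on single-crease functions is not known to imply positivity on all convex test functions for a general weighted polytope. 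In the paper this reduction is a \emph{consequence} of the ambitoric machinery: one first constructs a formal solution $H_{A,B}$ with $H^{ij}_{\textnormal{ }ij}$ equal to the associated affine function (which requires the preferred ambitoric coordinates of Lemma \ref{orthoglem}, themselves available only under a necessary condition for stability), and the integration-by-parts formula of Lemma \ref{intbypartslem} then expresses $\mathcal{L}(f)$ as $\int_Q H^{ij}f_{ij}\,d\lambda$, reducing stability to positive-definiteness of $H_{A,B}$, i.e.\ to positivity of two quartics $A$ and $B$ (Proposition \ref{stabandformsoleq}).

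Second, your proposed finite optimization over ``extremal chords --- chords through a vertex of $Q$ and chords parallel to an edge'' is not justified and would in general test the wrong family: the critical destabilizers in the paper are the ambitoric coordinate lines through the interior double zero of $A$ or $B$, which in negative ambitoric coordinates pass through the intersection point of a pair of opposite edges \emph{extended}, and in positive ambitoric coordinates are tangent lines to a conic; generically these are neither vertex chords nor parallel to an edge, so the suprema defining your $\rho_f^{\pm}$ need not be approached along your list. The identification of $r_0,r_1$, the non-emptiness of $(r_0,r_1)$ when $E_i\not\parallel E_j$ (Propositions \ref{adjsidesstab} and \ref{oppsidesstab}, resting on Lemma \ref{subtrilem} and an explicit positivity computation), and the closed-versus-open endpoint dichotomy all come out of a root-counting argument for the quartics: for adjacent edges $A$ has a double zero at one endpoint and a simple zero at the other, so the closed condition $A''\geq 0$ at the double zero (positive semi-definiteness of the Hessian of $\phi$ at a critical point, Lemma \ref{critpts0bdry}) already forces $A>0$ on the interior, whence stability persists at $r_0,r_1$; for opposite edges $A=\lambda(z-\alpha_0)^2(z-\alpha_\infty)^2$, so the strict open condition $\lambda>0$ is needed, and at the borderline $A\equiv 0$ and $\mathcal{L}$ vanishes on an entire one-parameter family of simple piecewise linear functions, not on a single interior chord as you suggest. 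You acknowledge that the reduction to finitely many critical chords is ``the main obstacle''; it is, and the proposal does not supply it.
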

The condition defining $r_0$ and $r_1$ can be computed easily from the data of the weighted quadrilateral, see \cite[Sect. 4.5]{sektnanthesis} for explicit formulae. Note that while the proof of this theorem uses the ambitoric coordinates of \cite{ACG15}, the condition for stability can be expressed without mention of the ambitoric structure.

Our results also give some indications about the metrics one should expect to arise in Donaldson's conjecture on the splitting of a polytope into semistable subpolytopes. When allowing $0$ boundary measure, we show that the set of stable weights along the boundary is not always open. This is because the criterion defining the numbers $r_0,r_1$ in \ref{mainquadthm} is a closed condition in the case of boundary measures with $0$ weight along two adjacent edges.

This non-openness is unexpected, since stability is an open condition when all boundary measures are positive. We relate this phenomenon to the singular behaviour the metrics have along the divisors corresponding to the edges with $0$ boundary measure, see corollary \ref{metricsdescription}. This shows that there are several distinct asymptotics occuring for extremal potentials corresponding to weighted polytopes with $0$ boundary measure along some edges. This further indicates that one may expect several types of singular behaviour for the metrics in Donaldson's conjecture. In corollary \ref{ssquads}, we show that strictly semistable quadrilaterals admit a splitting into two stable subpolytopes.

The organisation of the paper is as follows.   We begin in section \ref{background} by recalling some background relating to toric varieties, stability and Poincar\'e type metrics. 

In section \ref{genprops}, we start considering the special case of quadrilerals and state some conventions and notation we will be using. In section \ref{formalsols}, we recall the ambitoric construction of Apostolov-Calderbank-Gauduchon in \cite{ACG14} and \cite{ACG15}. Their construction is phrased for rational data, but we note that it can be applied for quadrilaterals of non-Delzant type, with arbitrary non-negative boundary measure. This is no different, in \cite{ACG15} it has simply been stressed what one has to check in the ambitoric setting to ensure that the data corresponds to the moment polytope of a toric orbifold surface.

The main body of work is in section \ref{stabinv}, which is devoted to proving theorem \ref{mainquadthm} using the ambitoric framework. We find a definite stable region that generically splits the region that is unstable into $4$ connected components. Moreover, in contrast to when the boundary measure is positive on all edges of the quadrilateral, we show that the stable region is in general \textit{not open}.

In section \ref{relstometssection}, we relate our findings of the previous section to the question of existence of extremal metrics on the corresponding orbifold surface, whenever the quadrilateral is Delzant, shedding more light on \cite[Rem. 4]{ACG15}. In particular, we describe how the predominant behaviour of our solutions are of mixed cone singularity and Poincar\'e type singularities, in a weak sense. We show that the non-openness of the stable region when allowing the boundary measure to vanish on some edges is related to the existence of an extremal metric with singularities along a divisor, but that this singular behaviour is neither conical nor of Poincar\'e type.

\subsection*{Acknowledgements:} This work was done as a part of the author's PhD thesis at Imperial College London. I would like to thank my supervisor Simon Donaldson for his encouragement and insight. I gratefully acknowledge the support from the Simons Center for Geometry and Physics, Stony Brook University at which some of the research for this paper was performed. I would also like to thank Vestislav Apostolov for helpful comments.

\section{Background}\label{background}
We begin by recalling some of the background relevant to the article. The classification of toric varieties is discussed in subsection \ref{torvars}. In \ref{wtdstab}, we consider log $K$-stability for toric varieties and state it in the more general context of weighted convex polytopes. We also prove some basic properties that we will make use of in the particular case of quadrilaterals. The metrics we will mostly be concerned with later are Poincar\'e type metrics, whose definition we recall in \ref{genptmets}, before considering how they can be described in the toric setting in \ref{toricpt}. 

\subsection{Toric varieties}\label{torvars}

Toric varieties are compactifications of the complex $n$-torus $T^n_{\mathbb{C}} = (\mathbb{C}^*)^n$ admitting a holomorphic action of this torus extending the action on itself. From the symplectic point of view, one instead considers the action of the compact group $T^n = (S^1)^n$ and the space as a fixed symplectic manifold. Compact toric varieties are classified in terms of certain polytopes, called \textit{Delzant polytopes}.

\begin{defn} A toric symplectic manifold of dimension $2n$ is a symplectic $2n$-dimensional manifold $(M, \omega)$ with a Hamiltonian action of the $n$-torus $T^n$.
\end{defn}

There is then a moment map $\mu : M \rightarrow (\mathfrak{t}^n)^*$ for the torus action. The image of the moment map $\mu$ is the convex hull of the fixed points of the action, provided $M$ is compact. Only a certain type of images appear. The following definitions will capture precisely the type of image occuring in the compact case. Recall that a half-space $H$ in a vector space $V$ is a set of the form $\{ x \in V : l(x) \geq 0 \}$ for some affine function $l : V \rightarrow \mathbb{R}$. Its boundary $\partial H$ is the set $\{ x \in V : l(x) = 0 \}$.

\begin{defn}\label{delzpoldefn} A convex polytope $\Delta$ in a finite dimensional vector space $V$ is a non-empty intersection $\cap_{i = 1}^k H_i$ of finitely many half-spaces $H_i$.  A \textnormal{face} of $\Delta$ is a non-trivial intersection
\begin{align*} F = \Delta \cap \partial H
\end{align*}
for some half-space $H$ such that $\Delta \subseteq H$. If $H$ is unique, then $F$ is called a \textnormal{facet}.

Let $\langle \cdot , \cdot \rangle$ denotes the contraction $V \times V^* \rightarrow \mathbb{R}$. Given a lattice $\Lambda$ in $V^*$, a polytope $\Delta$ is called \textnormal{Delzant}, with respect to this lattice, if it is bounded and can be represented as
\begin{align*} \Delta = \bigcap_{i=1}^k \{ x \in V : \langle x , u_i \rangle  \geq c_i \}
\end{align*}
where each $u_i \in \Lambda$ and each $c_j \in \mathbb{R}$, and moreover that each vertex is an intersection of exactly $n$ facets $F_i$ such that the $u_i$ form a basis of the lattice over $\mathbb{Z}$, where $n$ is the dimension of $V$.
\end{defn}

We make some remarks and mention some language we will use. We will call the $u_i$ appearing in the definition of a facet $F_i$ the \textit{conormal} to $F_i$. This is not unique, but we can fix it as follows. An element $u$ of the lattice $\Lambda$ is called \textit{primitive} if $\lambda u \in \Lambda$ for some $|\lambda | \leq 1$ implies that $\lambda = \pm 1$. So up to sign, there is a unique multiple $u$ of $u_i$ which is primitive. We can fix the sign of $u$ by requiring that $\Delta \subseteq \{ x : \langle x , u \rangle \geq c_i \}$. We then say that $u$ is \textit{inward-pointing}. 

The classification theorem for symplectic toric manifolds says that they are classified by Delzant polytopes.

\begin{thm}[\cite{delzant88}]\label{toricclassification} 

Let $(M, \omega)$ be a symplectic toric manifold and let $\mu$ be a moment map for the torus action. Then the image $\mu (M)$ of $M$ is a Delzant polytope in $(\mathfrak{t}^n)^*$ with respect to the integer lattice in $\mathfrak{t}^n = \mathbb{R}^n$.  Isomorphic symplectic toric manifolds give isomorphic Delzant polytopes and moreover, for each Delzant polytope $P$, there exists a toric symplectic manifold $(M_P, \omega_P)$ with a moment map whose image is $P$.
\end{thm}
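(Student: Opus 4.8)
The plan is to prove the three assertions in turn: that $\mu(M)$ is a Delzant polytope, that isomorphic toric manifolds yield isomorphic polytopes, and that every Delzant polytope is realised. For the first, the polytopal and convexity structure is already available: as recorded above, when $M$ is compact the image $\mu(M)$ is the convex hull of the images of the fixed points, and by the Atiyah--Guillemin--Sternberg convexity theorem this is a convex polytope. The content specific to the present theorem is the \emph{Delzant} condition of Definition \ref{delzpoldefn}, namely that at each vertex exactly $n$ facets meet with inward conormals forming a $\mathbb{Z}$-basis. For this I would fix a fixed point $p \in M$ mapping to a vertex and apply the equivariant Darboux / local normal form theorem: near $p$ the Hamiltonian $T^n$-action is equivariantly symplectomorphic to a linear action on $(\mathbb{C}^n, \omega_{\mathrm{std}})$ with isotropy weights $\alpha_1, \dots, \alpha_n$, the local moment map being $z \mapsto \mu(p) + \sum_j \tfrac{1}{2}|z_j|^2 \alpha_j$ up to sign. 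Effectiveness forces the weights to be linearly independent, and since $M$ is $2n$-dimensional the isotropy representation exhausts $T_pM$, so there are exactly $n$ of them; smoothness of $M$ (rather than an orbifold) forces the $\alpha_j$ to be a lattice basis. The local image is then the cone $\mu(p) + \mathrm{Cone}(\alpha_1, \dots, \alpha_n)$, i.e.\ a Delzant vertex germ, and running this at every vertex gives the claim.

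For the second assertion I would argue directly. If $\Phi : (M_1, \omega_1) \to (M_2, \omega_2)$ is an equivariant symplectomorphism intertwining the two actions through a Lie-group isomorphism $\rho : T^n \to T^n$, then $\mu_2 \circ \Phi$ is a moment map for the $M_1$-action pulled back through $\rho$, hence differs from $\mu_1$ only by the dual $\rho^*$ and an additive constant. The resulting affine lattice isomorphism of $(\mathfrak{t}^n)^*$ carries $\mu_1(M_1)$ onto $\mu_2(M_2)$, so the polytopes are isomorphic in the lattice-affine sense. This direction needs only naturality of moment maps and is routine.

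The substantive existence statement I would prove by the Delzant construction via symplectic reduction. Writing $P = \bigcap_{i=1}^k \{x : \langle x, u_i\rangle \ge c_i\}$ with each $u_i$ primitive, I would define the surjection $\pi : \mathbb{R}^k \to \mathbb{R}^n$, $e_i \mapsto u_i$, set $\mathfrak{n} = \ker \pi$, and let $N \subset T^k$ be the connected subtorus with Lie algebra $\mathfrak{n}$. One then reduces $(\mathbb{C}^k, \omega_{\mathrm{std}})$, carrying its standard Hamiltonian $T^k$-action with moment map $\mu_{T^k}(z) = (\tfrac{1}{2}|z_1|^2, \dots, \tfrac{1}{2}|z_k|^2)$, by $N$ at the level $c_{\mathfrak{n}} \in \mathfrak{n}^*$ obtained by restricting $c = (c_1, \dots, c_k)$ along the dual inclusion. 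Setting $M_P = \mu_N^{-1}(c_{\mathfrak{n}})/N$, the residual $T^n = T^k/N$ action is Hamiltonian and toric, and a diagram chase identifies its moment image with $P$. The crucial role of the Delzant hypothesis is that it is \emph{exactly} what makes $N$ act freely on the relevant level set, so that $M_P$ is a smooth manifold: over a vertex, the active facets have conormals forming a lattice basis, which forces the corresponding coordinate subtorus of $T^k$ to project isomorphically onto $T^n$ and hence to meet $N$ trivially.

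The main obstacle is precisely this freeness verification: showing that the unimodularity built into the Delzant condition translates into the $N$-action being free along the \emph{entire} level set, uniformly across all boundary strata of $P$. The linear-algebra bookkeeping — matching which coordinate hyperplanes $\{z_i = 0\}$ one lies on to which facets of $P$ are active, and checking that the stabiliser in $N$ is trivial in each stratum — is where the argument has genuine content. Once freeness is established, smoothness of $M_P$, the computation of its moment polytope, and (via a Moser-type argument) the independence of the construction from the auxiliary choices all follow in the standard way.
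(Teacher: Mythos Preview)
The paper does not prove this theorem: it is stated as background with the citation \cite{delzant88} and no argument is given. There is therefore nothing to compare your proposal against in the paper itself.

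That said, your outline is the standard proof of Delzant's theorem and is essentially correct. The three-part structure (local normal form at fixed points to extract the Delzant vertex condition; naturality of moment maps for the isomorphism statement; symplectic reduction of $\mathbb{C}^k$ by the kernel torus $N$ for existence) is exactly how the result is established in the literature, and you have correctly identified the freeness of the $N$-action on the level set as the place where the Delzant hypothesis does its work. If you were to flesh this out into a full proof, the only points I would flag as needing more care are: (i) in the first part, the passage from ``the vertex cones are Delzant'' to ``the global polytope is Delzant'' deserves a sentence, since you also want the facet conormals to be primitive globally, not just that some lattice basis appears at each vertex; and (ii) in the reduction step, one should also check that the level set $\mu_N^{-1}(c_{\mathfrak n})$ is compact (this uses boundedness of $P$) so that $M_P$ is a compact manifold. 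Neither of these is a genuine obstacle.
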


Note here that $\mu$ maps to $(\mathfrak{t}^n)^*$, so in terms of the definition \ref{delzpoldefn}, we have $V = (\mathfrak{t}^n)^*$, $V^* = \mathfrak{t}^n$ and $\Lambda = \mathbb{Z}^n = \text{ker} ( \text{exp} : \mathfrak{t}^n \rightarrow T^n )$. The theorem was extended to the orbifold case by Lerman-Tolman in \cite{lermantolman97}.

All compact toric symplectic manifolds are obtained as the symplectic reduction of a torus $T^d$ acting on $\mathbb{C}^d$, for some $d$. One then takes the quotient by a subtorus $N = T^{d-n}$ and is left with a quotient space $M_P$ on which an n-torus $T^n = T^d/N$ acts in a Hamiltonian fashion. This gives a construction of $(M_P, \omega_P)$, the toric manifold associated to a polytope $P$.

To construct the manifold above (ignoring the symplectic form), we could instead have started with a complex point of view, where we would have everything complexified. That is, we would work with the complexified groups $N_{\mathbb{C}} \cong T_{\mathbb{C}}^{d-n}, T^d_{\mathbb{C}}$ and $T^n_{\mathbb{C}}$ and taken a quotient $\mathbb{C}^d \sslash N_{\mathbb{C}}$, the GIT quotient. As a smooth manifold, these are diffeomorphic, but the symplectic quotient comes with a symplectic structure and the GIT quotient comes with a complex structure.

\begin{rem} The complex quotient $\mathbb{C}^d \sslash N_{\mathbb{C}}$ does not depend on which moment map we chose for the action on the resulting smooth manifold. That is, it does not depend on translations of $P$. In fact, more is true. Different polytopes can give rise to the same manifold (the complex quotient only depends on the \lq \lq fan" of $P$, which in the compact case is the arrangement of the conormals of $P$ in the lattice). The significance is that the polytope contains more information than the complex picture, we have also specified a cohomology class $\Omega = [\omega] \in H^2(M, \mathbb{R})$. This cohomology class turns out to be integral if and only if, after a translation, the vertices of $P$ lie on the lattice.
\end{rem}

\subsection{Weighted stability}\label{wtdstab}

Let $d\lambda$ be the Lebesgue measure. We say a measure $d\sigma$ on the boundary of a bounded convex polytope $P$ is a \textit{positive boundary measure for $P$} if on the $i^{\textnormal{th}}$ facet $F_i$ of $P$, $d\sigma$ satisfies
\begin{align}\label{coneanglemeasurepos} l_i \wedge d\sigma = \pm r_i d \lambda
\end{align}
where $l_i$ is an affine function defining $F_i$ and the $r_i > 0$ are constants. We say $d \sigma$ is \textit{non-negative} if we relax the condition to $r_i \geq 0$, only. If $d\sigma$ is a non-negative boundary measure on $P$, we call the pair $(P,d\sigma)$ a \textit{weighted polytope}. 

Note that if $P$ is Delzant, then there is a canonical associated boundary measure. This is given by satisfying \ref{coneanglemeasurepos} with $r_i =1$ and the $l_i$ being the defining functions
\begin{align*} l_i (x) = \langle u_i, x \rangle + c_i,
\end{align*}
where $c_i \in \mathbb{R}$ and $u_i$ is the primitive inward-pointing normal to the facet $l_i^{-1} (0) \cap P$.

Let $A$ be a bounded function on a bounded convex polytope $P$. One can then define a functional $\mathcal{L}_A$ on the space of continuous convex functions on $P$ by 
\begin{align} \mathcal{L}_A (f) = \int_{\partial P} f d \sigma -  \int_{P} A f d \lambda .
\end{align}
Note that there is a unique affine linear $A$ such that $\mathcal{L}_{A} (f) = 0$ for all affine linear $f$. 
\begin{defn}\label{assafffn} Given a weighted polytope $(P,d\sigma)$, we call the affine linear function $A$ such that $\mathcal{L}_{A}$ vanishes on all affine linear functions the \textnormal{affine linear function associated to the weighted polytope} $(P,d\sigma)$. Also, we write $\mathcal{L} = \mathcal{L}_{A}$.
\end{defn}

We say a function $f$ on $P$ is \textit{piecewise linear} if it is the maximum of a finite number of affine linear functions. We say it is \textit{rational} if the coefficients of the affine linear functions are all rational, up to multiplication by a common constant. 

\begin{defn}\label{weightedpolytopestability} Let $(P, d\sigma)$ be a weighted polytope. We say $P$ is \textnormal{weighted polytope stable}, or more briefly \textnormal{stable}, if 
\begin{align}\label{stabdefnineq} \mathcal{L} (f) \geq 0 
\end{align}
for all piecewise linear functions $f$, with equality if and only if $f$ is affine linear. If $(P, d\sigma)$ is not stable, we say it is \textnormal{unstable}. If \ref{stabdefnineq} holds for all piecewise linear $f$, but there is a non-affine function $f$ with $\mathcal{L} (f) = 0$, we say $(P, d\sigma)$ is \textnormal{strictly semistable}. We say $(P,d\sigma)$ is \textnormal{semistable} if it is either stable or strictly semistable.
\end{defn}

\begin{rem} If $P$ is Delzant and $d\sigma$ is the canonical boundary measure associated to $P$, then this is the definition of $K$-stability with respect toric degenerations, see \cite{Don02}.
\end{rem}

A natural question one could ask is given a bounded convex polytope $P$, how does stability depend on the weight $d\sigma$? By specifying a positive background measure $d\sigma^0$, we identify the set of weights with $\mathbb{R}^d_{\geq 0} \setminus \{ 0 \}$. We now give two elementary lemmas about the set of stable weights.

\begin{lem}\label{setofstblwtslem} Let $(P, d\sigma^0 )$ be a polytope with $d$ facets $F_1, \cdots, F_d$, and with $d\sigma^0$ an everywhere positive  measure on the boundary $\partial P$ of $P$ as above. Then the set of weights $\underline{r} = (r_1, \cdots, r_d) \in \mathbb{R}^{d}_{\geq 0}$ such that $(P, d\sigma_{\underline{r}})$ is stable is a convex subset of $\mathbb{R}^d_{\geq 0}$.
\end{lem}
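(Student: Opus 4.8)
The plan is to show convexity directly from the definition of stability, using the fact that the functional $\mathcal{L}$ depends affinely on the weight vector $\underline{r}$.

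First I would fix notation: for $\underline{r} = (r_1, \dots, r_d) \in \mathbb{R}^d_{\geq 0}$, let $d\sigma_{\underline{r}}$ be the boundary measure whose restriction to $F_i$ satisfies $l_i \wedge d\sigma_{\underline{r}} = \pm r_i \, d\lambda$ (normalised consistently with $d\sigma^0$), and let $\mathcal{L}^{\underline{r}}$ be the associated functional of Definition \ref{assafffn}, i.e. $\mathcal{L}^{\underline{r}}(f) = \int_{\partial P} f \, d\sigma_{\underline{r}} - \int_P A_{\underline{r}} f \, d\lambda$, where $A_{\underline{r}}$ is the associated affine linear function. The first key observation is that $\underline{r} \mapsto \int_{\partial P} f \, d\sigma_{\underline{r}}$ is linear in $\underline{r}$ for each fixed $f$, since it decomposes as $\sum_i \int_{F_i} f \, d\sigma_{\underline{r}}$ and each term scales linearly with $r_i$. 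The second observation is that $\underline{r} \mapsto A_{\underline{r}}$ is also linear: $A_{\underline{r}}$ is the unique affine linear function making $\mathcal{L}^{\underline{r}}$ vanish on all affine linear functions, and this is a linear system (in the finitely many coefficients of $A$) whose right-hand side depends linearly on $\underline{r}$ and whose coefficient matrix (the Gram-type matrix $\int_P f_a f_b \, d\lambda$ over a basis of affine linear functions) is fixed and invertible; hence $A_{\underline{r}} = \sum_i r_i A_{e_i}$ where $A_{e_i}$ is the associated function for the $i$-th standard basis weight. Combining these, $\mathcal{L}^{\underline{r}}(f) = \sum_i r_i \mathcal{L}^{e_i}(f)$, so for each fixed piecewise linear $f$ the map $\underline{r} \mapsto \mathcal{L}^{\underline{r}}(f)$ is linear, in particular affine.

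Now let $\underline{r}, \underline{s}$ be two stable weights and $t \in [0,1]$; set $\underline{u} = (1-t)\underline{r} + t\underline{s}$, which lies in $\mathbb{R}^d_{\geq 0}$. For any piecewise linear $f$ we have $\mathcal{L}^{\underline{u}}(f) = (1-t)\mathcal{L}^{\underline{r}}(f) + t\, \mathcal{L}^{\underline{s}}(f) \geq 0$ since both terms are nonnegative by stability. For the equality clause: if $f$ is piecewise linear with $\mathcal{L}^{\underline{u}}(f) = 0$, then, assuming $t \in (0,1)$, both $\mathcal{L}^{\underline{r}}(f) = 0$ and $\mathcal{L}^{\underline{s}}(f) = 0$ (a convex combination of nonnegatives vanishes only if each vanishes), and stability of $\underline{r}$ forces $f$ to be affine linear; for $t = 0$ or $t = 1$ this is immediate. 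Conversely $\mathcal{L}^{\underline{u}}$ vanishes on all affine linear $f$ by construction of $A_{\underline{u}}$. Hence $\underline{u}$ is stable, proving the set of stable weights is convex.

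The main obstacle — really the only nontrivial point — is justifying that $\underline{r} \mapsto A_{\underline{r}}$ is linear, i.e. that the normalisation $A_{\underline{r}}$ in Definition \ref{assafffn} genuinely depends linearly on the weight. I would handle this carefully by writing out the defining linear system: picking an affine basis $1, x_1, \dots, x_n$ of the affine linear functions on $V$, the condition $\mathcal{L}^{\underline{r}}(x_j) = 0$ and $\mathcal{L}^{\underline{r}}(1) = 0$ becomes $M \mathbf{a} = \mathbf{b}(\underline{r})$ where $M_{jk} = \int_P x_j x_k \, d\lambda$ is positive definite (hence invertible, independent of $\underline{r}$) and $b_j(\underline{r}) = \int_{\partial P} x_j \, d\sigma_{\underline{r}}$ is linear in $\underline{r}$ by the first observation; so $\mathbf{a} = M^{-1}\mathbf{b}(\underline{r})$ is linear in $\underline{r}$, giving linearity of $A_{\underline{r}}$. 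Everything else is the elementary fact that a convex combination of nonnegative numbers is nonnegative and vanishes only when all the numbers vanish.
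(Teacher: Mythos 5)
Your proof is correct and follows essentially the same route as the paper's: both exploit the affine dependence of $\mathcal{L}_{\underline{r}}$ on $\underline{r}$ and the fact that a convex combination of nonnegative quantities vanishes only when each does. The only difference is that you carefully justify the linearity of $\underline{r} \mapsto A_{\underline{r}}$ via the invertible Gram system, a step the paper simply asserts.
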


\begin{proof} Let $\underline{r}_0, \underline{r}_1$ be stable weights, and set $\underline{r}_t = (1-t) \underline{r}_0 + t \underline{r}_1$. Let $A_t$ be the affine function associated to the weighted polytope $(P, d\sigma_{\underline{r}_t})$. Then $A_t = (1-t) A_0 + t A_1$, and so, for all convex functions $f$ on $P$, we have
\begin{align*} \mathcal{L}_{\underline{r}_t} (f) &= (1-t) \mathcal{L}_{\underline{r}_0} (f) + t \mathcal{L}_{\underline{r}_1} (f) \\
& \geq 0.
\end{align*}
Moreover, since $\mathcal{L}_{\underline{r}_t} (f) \geq 0$ with equality if and only if $f$ is affine for $t = 0,1$, it follows that this holds for all $t \in [0,1]$ too. Hence $\underline{r}_t$ is a stable weight for all $t \in [0,1]$, as required.
\end{proof}

\begin{lem} Let $\underline{r}$ be a stable weight for $(P, d\sigma^0)$. Then $c \cdot \underline{r}$ is a stable weight for all $c > 0$. 
\end{lem}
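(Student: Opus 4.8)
The plan is to exploit the fact that every ingredient in the definition of $\mathcal{L}$ depends \emph{linearly} on the weight vector, so that the whole functional merely rescales. First I would observe that, under the identification of weights with $\mathbb{R}^d_{\geq 0}$ via the positive background measure $d\sigma^0$, one has $d\sigma_{c\underline{r}} = c\, d\sigma_{\underline{r}}$ as measures on $\partial P$; consequently $\int_{\partial P} f\, d\sigma_{c\underline{r}} = c \int_{\partial P} f\, d\sigma_{\underline{r}}$ for every continuous $f$ on $P$.

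Next I would track the associated affine linear function. Recall from Definition \ref{assafffn} that $A_{\underline{r}}$ is the unique affine linear function with $\int_{\partial P} g\, d\sigma_{\underline{r}} = \int_P A_{\underline{r}}\, g\, d\lambda$ for all affine linear $g$. The left-hand side is linear in $\underline{r}$ and the right-hand side is linear in the affine function, and the solution is unique; hence $A_{c\underline{r}} = c\, A_{\underline{r}}$. Combining the two observations gives, for every continuous convex function $f$ on $P$,
\[
\mathcal{L}_{c\underline{r}}(f) = \int_{\partial P} f\, d\sigma_{c\underline{r}} - \int_P A_{c\underline{r}}\, f\, d\lambda = c\left( \int_{\partial P} f\, d\sigma_{\underline{r}} - \int_P A_{\underline{r}}\, f\, d\lambda \right) = c\, \mathcal{L}_{\underline{r}}(f),
\]
in particular for every piecewise linear $f$.

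Finally, since $c > 0$, the inequality $\mathcal{L}_{c\underline{r}}(f) \geq 0$ holds if and only if $\mathcal{L}_{\underline{r}}(f) \geq 0$, and $\mathcal{L}_{c\underline{r}}(f) = 0$ if and only if $\mathcal{L}_{\underline{r}}(f) = 0$. By the assumed stability of $\underline{r}$, the first holds for all piecewise linear $f$ and the equality case forces $f$ to be affine linear; both properties therefore transfer verbatim to $c\underline{r}$, which is thus a stable weight. There is no genuine obstacle here: the only point needing a word of justification is the linearity and uniqueness of the assignment $\underline{r} \mapsto A_{\underline{r}}$, which is immediate, so the proof is essentially a one-line rescaling once that is noted. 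The same computation shows en passant that semistability and strict semistability are likewise invariant under positive scaling of the weight.
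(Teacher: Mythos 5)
Your proposal is correct and is exactly the paper's argument: the paper's proof is the one-liner $A_{c\underline{r}} = cA_{\underline{r}}$, hence $\mathcal{L}_{c\underline{r}} = c\,\mathcal{L}_{\underline{r}}$, and the stability conditions are preserved since $c>0$. You have simply written out the justification of the scaling of the associated affine function in more detail, which is fine but adds nothing essentially new.
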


\begin{proof} $A_{c \cdot \underline{r}} = c A_{\underline{r}}$, and so $\mathcal{L}_{c \underline{r}} = c \cdot \mathcal{L}_{\underline{r}}$. The lemma follows immediately from this.
\end{proof}

The stable set in $\mathbb{R}^d$ is thus a convex cone on the stable weights with $\sum_i r_i = 1$, and so to fully describe the stable set one can without loss of generality consider weights such that $\sum_i r_i = 1$. Later we investigate the dependence of stability on the weights in the particular case of quadrilaterals.

We end the section with two important lemmas. For the first, let $\text{SPL} (P)$ denote the space of \text{simple} piecewise linear functions on $P \subseteq (\mathbb{R}^2)^*$, that is functions $f$ of the form $x \mapsto \text{max} \{ 0, h(x) \}$ for an affine linear function $h : (\mathbb{R}^2)^* \rightarrow \mathbb{R}$. Note that we have a map 
\begin{align*}\textnormal{SPL} : \textnormal{Aff}(\mathbb{R}^2) \rightarrow \textnormal{SPL} (P)
\end{align*}
given by
\begin{align*} h \mapsto \text{max} \{ 0 , h(x) \}.
\end{align*}

Let $P$ be a $2$-dimensional polytope and fix two edges $E_1$ and $E_2$ of $P$ with vertices $v_1, w_1$ and $v_2, w_2$, respectively. Any point $p$ on $E_1$, respectively $q$ on $E_2$, can then be written as
\begin{align*}  p &= (1-s) v_1 + s w_1 ,\\
q&= (1-t) v_2 + t w_2
\end{align*}
for some $s,t \in [0,1]$. Let $p_i$, respectively $q_i$, be the $i^{\textnormal{th}}$ component of $p$, respectively $q$. This determines an affine linear function $l_{s,t}$ which vanishes on $p,q$ and for which the coefficients for the non-constant terms are linear in $s$ and $t$. Specifically, writing $l_{s,t} = a x + by + c$, let 
\begin{align*} a &= q_2 - p_2, \\
b&= p_1 - q_1, \\
c &= - a p_1 - b p_2.
\end{align*}
We then have
\begin{lem}\label{polylem} Let $\phi : [0,1] \times [0,1] \rightarrow \mathbb{R}$ be given by 
\begin{align*} (s,t) \mapsto \mathcal{L} (\textnormal{SPL} (l_{s,t}) ).
\end{align*}
Then $\phi$ is a polynomial in $(s,t)$ of bidegree $(3,3)$ and total degree $5$.
\end{lem}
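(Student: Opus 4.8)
The plan is to write $\phi$ as a difference of integrals of the single affine function $l_{s,t}$ over a sub-polygon of $P$ of fixed combinatorial type, and then read the degrees off explicit change-of-variable formulas. Write $a\wedge b=a_1b_2-a_2b_1$ for $a,b\in\mathbb{R}^2$, set $e_i=w_i-v_i$ (so $p=v_1+se_1$, $q=v_2+te_2$) and $d=v_2-v_1$, so that $q-p=d+te_2-se_1$. The starting point is the identity, immediate from the formulas for $a,b,c$,
\begin{align*}l_{s,t}(x)=(x-p)\wedge(q-p);\end{align*}
for fixed $x$ this is a polynomial in $(s,t)$ of bidegree at most $(1,1)$ since the potential $s^2$-term is $-(e_1\wedge e_1)=0$, and it vanishes identically in $x$ to first order at $x=p$. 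Since $\textnormal{SPL}(l_{s,t})=\max(0,l_{s,t})$ equals $l_{s,t}$ on $P^+_{s,t}:=\{x\in P:l_{s,t}(x)\ge0\}$ and $0$ elsewhere,
\begin{align*}\phi(s,t)=\int_{\partial P\cap P^+_{s,t}}l_{s,t}\,d\sigma-\int_{P^+_{s,t}}A\,l_{s,t}\,d\lambda ,\end{align*}
and one could equally use the complementary piece, at the cost of an overall sign, because $\mathcal{L}$ annihilates affine functions and $\textnormal{SPL}(l_{s,t})-l_{s,t}=\textnormal{SPL}(-l_{s,t})$. For $(s,t)\in(0,1)^2$, $p$ and $q$ are distinct points in the relative interiors of $E_1,E_2$, so by convexity the line $\{l_{s,t}=0\}$ meets $\partial P$ exactly in $\{p,q\}$, and $P^\pm_{s,t}$ are polygons whose vertices are $p$, $q$, and a fixed sublist of the vertices of $P$ (those separating $E_1$ from $E_2$ on the relevant side), independent of $(s,t)$. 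As $\phi$ is continuous on $[0,1]^2$ (the map $(s,t)\mapsto\max(0,l_{s,t})$ is continuous into $C^0(P)$ and $\mathcal{L}$ is continuous there), it agrees on $[0,1]^2$ with the single expression valid on $(0,1)^2$, and it remains to bound the degrees.

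For the Lebesgue term I would triangulate $P^+_{s,t}$ by fanning from the moving vertex $p$: every triangle is either $[p,B,C]$ with $B,C$ fixed vertices of $P$, or the single triangle $[p,v_2,q]$ (relabelling the endpoints of $E_2$ if needed). Pulling back to the standard simplex $\Delta$ by $Y(u,w)=(1-u-w)p+uB+wC$, one has, up to the (constant on $(0,1)^2$) sign of the Jacobian, $\int_{[p,B,C]}A\,l_{s,t}\,d\lambda=\big((B-p)\wedge(C-p)\big)\int_\Delta A(Y)\,l_{s,t}(Y)\,du\,dw$, where $(B-p)\wedge(C-p)$ and $A(Y)$ are affine in $s$ and constant in $t$ (again the $s^2$-term cancels), and $l_{s,t}(Y)=u\,(B-p)\wedge(q-p)+w\,(C-p)\wedge(q-p)$ has coefficients of bidegree at most $(1,1)$ and no constant term in $(u,w)$; multiplying and integrating out $(u,w)$ gives a contribution of bidegree at most $(3,1)$ and total degree at most $4$. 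For the triangle $[p,v_2,q]$, parametrized by $Z(u,w)=(1-u-w)p+uv_2+wq$, the $w$-part of $l_{s,t}(Z)$ drops because $(q-p)\wedge(q-p)=0$, leaving $l_{s,t}(Z)=u\,(v_2-p)\wedge(q-p)=u\,t\,\big(d\wedge e_2-s(e_1\wedge e_2)\big)$ after the $s$-terms collapse via $e_1\wedge d+d\wedge e_1=0$; since $A(Z)$ has coefficients of $(s,t)$-total degree at most $1$ and $(v_2-p)\wedge(q-p)$ has bidegree at most $(1,1)$, this contribution has bidegree at most $(3,3)$ and total degree at most $5$. Summing, $\int_{P^+_{s,t}}A\,l_{s,t}\,d\lambda$ is a polynomial of bidegree at most $(3,3)$ and total degree at most $5$.

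For the boundary term, the relation $l_i\wedge d\sigma=\pm r_i\,d\lambda$ forces $d\sigma$ to restrict on each edge to a constant multiple of the affine-parameter measure. On a fixed edge $E=\{v_E+\tau e_E:\tau\in[0,1]\}$ inside $\partial P\cap P^+_{s,t}$, $l_{s,t}(v_E+\tau e_E)$ is affine in $\tau$ with $(s,t)$-coefficients of bidegree at most $(1,1)$, so $\int_E l_{s,t}\,d\sigma$ has bidegree at most $(1,1)$; on the truncated pieces of $E_1$ and $E_2$ the restriction of $l_{s,t}$ vanishes at $p$ (resp.\ $q$), and integrating a linear-in-$\tau$ function vanishing at the endpoint $\tau=s$ (resp.\ $\tau=t$) gives $\tfrac12\rho_1(1-s)^2\big(e_1\wedge d+t(e_1\wedge e_2)\big)$ and $\tfrac12\rho_2t^2\big(d\wedge e_2-s(e_1\wedge e_2)\big)$ (with $(1-s)^2$ replaced by $s^2$, and $t^2$ by $(1-t)^2$, according to which piece is $P^+_{s,t}$), of bidegree at most $(2,1)$ and $(1,2)$ respectively. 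Hence $\int_{\partial P\cap P^+_{s,t}}l_{s,t}\,d\sigma$ has bidegree at most $(2,2)$ and total degree at most $3$, and combining with the previous paragraph $\phi$ is a polynomial of bidegree at most $(3,3)$ and total degree at most $5$, as claimed.

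The delicate point throughout is the degree bookkeeping: a naive count overshoots, and the bound hinges precisely on the cancellations $e_1\wedge e_1=0$, $e_1\wedge d+d\wedge e_1=0$ and $(q-p)\wedge(q-p)=0$ together with the vanishing of $l_{s,t}$ at the moving vertices $p$ and $q$. One also has to verify carefully that the combinatorial type of $P^\pm_{s,t}$ really is constant on $(0,1)^2$ — using only that $p,q$ lie in the relative interiors of $E_1,E_2$ and that a line meets the boundary of a convex polygon in at most two points — so that $\phi$ is a genuine polynomial rather than merely piecewise polynomial.
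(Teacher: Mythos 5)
Your proof is correct, and its overall strategy coincides with what the paper sketches: observe that for $(s,t)\in(0,1)^2$ the set $\{l_{s,t}\ge 0\}$ has constant combinatorial type, so $\phi$ is given on all of $[0,1]^2$ by a single integral expression, and then bound the degrees by direct computation. Where you differ is in the decomposition. The paper splits the polytope integral into a fixed region $R$ (on which only the bidegree-$(1,1)$ integrand $l_{s,t}$ varies) plus a moving quadrilateral bounded by $E_1$, $E_2$, $l_{0,0}^{-1}(0)$ and $l_{s,t}^{-1}(0)$, and then omits the computation on the moving piece entirely; you instead fan a triangulation of $\{l_{s,t}\ge 0\}$ from the moving vertex $p$, which isolates the single triangle $[p,v_2,q]$ responsible for the top-degree terms and makes explicit the cancellations ($e_1\wedge e_1=0$, $(q-p)\wedge(q-p)=0$, and the vanishing of $l_{s,t}$ at $p$ and $q$) on which the bound actually hinges. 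Your version therefore supplies precisely the bookkeeping the paper declines to write down, and your verification that the combinatorial type really is constant on $(0,1)^2$ (the line through $p$ and $q$ is not the affine hull of any edge, hence meets $\partial P$ only at $p$ and $q$, so $l_{s,t}$ has constant sign at each vertex of $P$) is a point the paper takes for granted. The only caveat is that you establish the degrees as upper bounds; that they are attained is a genericity statement the paper does not address either, and it is irrelevant for the subsequent applications (lemma \ref{critpts0bdry} and the Hessian criteria), which use only the polynomial structure and the stated bounds.
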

For the proof the integrals one has to perform, say the ones over the polytope, can be decomposed as the integral of $A l_{s,t}$ over some fixed region $R$, where this statement holds, and a quadilateral region $Q_{s,t}$ bounded by $E_1,E_2, l_{0,0}^{-1} (0)$ and $l_{s,t}^{-1} (0)$. Direct computation, which we omit, then shows that this holds. Similarly for the boundary region.

The second lemma we will need concerns edges with $0$ weight in weighted two dimensional polytopes.

\begin{lem}\label{critpts0bdry} Let $P$ be a $2$-dimensional polytope with non-negative boundary measure $d\sigma$. Let $\nu(s,t)$ be the polynomial in \ref{polylem} for two edges $F_1$ and $F_2$ adjacent to an edge $E$ along which $d\sigma$ vanishes. Then the point in $[0,1] \times [0,1]$ corresponding to a simple piecewise linear function with crease $E$ is a critical point of $\nu$.
\end{lem}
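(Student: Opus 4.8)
The claim is a statement about a single point in the $(s,t)$-square, namely the one whose corresponding simple piecewise linear function has its crease exactly along $E$. I want to show this point is a critical point of $\nu$, i.e.\ both partial derivatives vanish there. The natural strategy is to compute $\partial_s \nu$ and $\partial_t \nu$ directly, using the decomposition of $\nu(s,t) = \mathcal{L}(\mathrm{SPL}(l_{s,t}))$ into a boundary part $\int_{\partial P} \mathrm{SPL}(l_{s,t})\, d\sigma$ and an interior part $-\int_P A\, \mathrm{SPL}(l_{s,t})\, d\lambda$, and differentiate each with respect to $s$ (the computation for $t$ being symmetric). The key geometric input is that when the crease line $l_{s,t}^{-1}(0)$ passes through the common vertex of $F_1$, $F_2$ and $E$, the region $\{l_{s,t}\ge 0\}\cap P$ on which $\mathrm{SPL}(l_{s,t})$ is supported degenerates, and the "extra" contribution one picks up when varying $s$ is supported precisely along $E$, where $d\sigma = 0$.

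**Key steps, in order.** First I would set up coordinates so that the common vertex $v$ of $F_1, F_2, E$ is a distinguished point, and parametrise: as $s$ varies, the zero-set $l_{s,t}^{-1}(0)$ rotates about the point $q$ on $F_2$ and its other endpoint $p$ slides along $F_1$. Second, I would differentiate the interior integral $I(s,t) = \int_{\{l_{s,t}\ge 0\}\cap P} A\, l_{s,t}\, d\lambda$ in $s$: since the integrand $A l_{s,t}$ vanishes on the boundary curve $l_{s,t}^{-1}(0)$, the boundary term in the Leibniz/transport formula drops out, leaving only $\int_{\{l_{s,t}\ge 0\}\cap P} A\, \partial_s l_{s,t}\, d\lambda$, and I must evaluate this at the special value of $s$. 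The point is that at that special value the domain $\{l_{s,t}\ge 0\}\cap P$ is a triangle with one side on $F_2$, one on the crease through $v$, and the relevant variation $\partial_s l_{s,t}$ corresponds to pivoting the crease about $q$ — I would show this interior contribution is cancelled by the corresponding derivative of the boundary integral over $F_1$ and $F_2$, leaving only a term over $E$. Third, for the boundary integral $\int_{\partial P}\mathrm{SPL}(l_{s,t})\, d\sigma$, I would split $\partial P$ into the facets; on $F_1$ the function $\mathrm{SPL}(l_{s,t})$ vanishes outside the sub-segment from $p$ to $v$ and is affine there, on $F_2$ similarly from $q$ to $v$, and on $E$ it is supported near $v$ — but since $d\sigma|_E = 0$, the $E$-contribution is identically zero, so $\partial_s$ of it is zero. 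Differentiating the $F_1$ and $F_2$ contributions and combining with the interior term via the fact that $\mathcal{L}$ is the associated functional (so affine pieces are controlled by $A$), I expect the $s$-derivative to collapse to something supported on $E$, hence to vanish. Finally, by the symmetric argument in $t$, both partials vanish and the point is critical.

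**The main obstacle.** The delicate part is organising the differentiation of the domain-dependent integrals cleanly enough to see the cancellation, rather than being swamped by the explicit bidegree-$(3,3)$ polynomial from Lemma \ref{polylem}. Concretely, the hard step is showing that the first-order variation of $\nu$ at the special point is exactly a boundary contribution along $E$ (plus terms that cancel between the interior integral and the $F_1, F_2$ boundary integrals), so that $d\sigma|_E = 0$ forces it to vanish. One clean way to package this: note that for $l_{s,t}$ with crease through $v$, $\mathrm{SPL}(l_{s,t})$ and its neighbouring members of the family differ, to first order in $s$, by a function supported in a thin sliver adjacent to $E$; since $\mathcal{L}$ kills affine functions and the variation is affine on that sliver away from $F_1\cup F_2$, the only surviving term is $\int_E (\text{variation})\, d\sigma = 0$. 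I would likely present this via the transport/Leibniz formula for $\frac{d}{ds}\int_{\Omega(s)} g\, d\lambda = \int_{\Omega(s)} \partial_s g + \int_{\partial\Omega(s)} g\, (\text{velocity})$, carefully tracking which pieces of $\partial\Omega(s)$ move, and observing that on the moving crease $g = A l_{s,t}$ vanishes while the only genuinely new boundary piece lies along $E$. Alternatively, if the bookkeeping becomes unwieldy, one can simply invoke Lemma \ref{polylem} to know $\nu$ is polynomial, write it out in closed form near the special point, and verify the two partials vanish by direct substitution — this is the brute-force fallback, and I would mention it as such while preferring the conceptual argument above.
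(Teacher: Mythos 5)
Your overall strategy is the same as the paper's: differentiate $\nu$ in each of the two coordinate directions at the special point, split $\mathcal{L}$ into its interior and boundary pieces, and use $d\sigma|_E=0$ to kill the only contribution that could survive at first order. Two points in your execution need correcting, though. First, there is no ``common vertex of $F_1$, $F_2$ and $E$'': the edges $F_1$ and $F_2$ meet $E$ at its two \emph{distinct} endpoints, and at the special point the crease is the whole edge $E$, with $p$ and $q$ sitting at those two different vertices; in particular the region $\{l_{s,t}\ge 0\}\cap P$ degenerates to the edge $E$ itself (a set of measure zero) at that point, rather than being a triangle. Second, the cancellation you propose between the interior integral and the $F_1,F_2$ boundary derivatives is not the mechanism, and if you went looking for it you would find nothing to cancel: each term vanishes on its own. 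Perturbing one parameter, say $t$, the positivity region is a sliver of width $O(t)$ on which $l_t=O(t)$, so the interior integral is $O(t^2)$ (equivalently, in your transport-formula formulation, $\int_{\Omega}A\,\partial_t l_t\,d\lambda$ is evaluated over a measure-zero domain at the special point); the contributions from $F_1$ and $F_2$ are likewise $O(t^2)$, because the sub-segments where $l_t>0$ have length $O(t)$ and $l_t=O(t)$ there. The only genuinely first-order term is the one over $E$, and that is exactly where $d\sigma$ vanishes. This is precisely the computation in the paper's proof, which simply verifies the $O(t^2)$ (and $O(s^2)$) behaviour of each piece directly; with these corrections your argument reduces to the same one, and your brute-force fallback via the explicit polynomial of Lemma \ref{polylem} would of course also work.
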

\begin{proof}We may assume that $E= E_1$, $F_1 = E_4$ and $F_2 = E_2$, where $E_1, E_2, E_4$ are the edges specified at the beginning of this section. Note that $P$ need not have $4$ edges, but we can take the three edges we are considering to be of this form. The boundary measure vanishes along $E$, is $r_1 dy$ along $F_1$ and $r_2 dx$ along $F_2$ for some non-negative constants $r_1, r_2$. The affine function $l_{s,t}$ that we integrate in \ref{polylem} is 
\begin{align*} l_{s,t} (x,y) &= (sq - tk) x - (1+ sp)y + tk (1+ sp)
\end{align*}
and the point corresponding to the crease being the edge $E$ is $(0,0)$.

By linearity it suffices to show that the directional derivative in two independent directions vanish. We first consider the partial derivative $\frac{\partial \phi}{\partial t} (0,0)$. So we are letting $s=0$ and we would like to compute the derivative of 
\begin{align*} t \mapsto \int_{\partial P \cap \{ l_t \geq 0\}} l_t d\sigma - \int_{P \cap\{ l_t \geq 0\}} l_t  A d\lambda
\end{align*}
at $0$, where $l_t=tk - y - t k x $. Here $A$ is the affine linear function associated to the weighted polytope $(P,d\sigma)$.

In taking the integral over the polytope, the integrals of all the terms in $l A$ is always divisible by $t^2$, since the constant and $x$-term in $l$ has a factor of $t$, and all integrals involving $y$ will introduce an extra factor of $t$. Thus the derivative of $\int_{P \cap l>0} l  A d\lambda$ is $0$ and we only need to consider the terms coming from the integral over the boundary. 

For this part, we are then considering the derivative of 
\begin{align*} t \mapsto \int_{F_1 \cap \{ l_t \geq 0\}} l_t d\sigma 
\end{align*}
since $l_t$ is only positive on $E$ and $F_1$, and the boundary measure vanishes on $E$. But this equals
\begin{align*} r_1 \int_{0}^{tk} (tk - y) dy
\end{align*}
since $F_1 \subseteq \{ x = 0\}$ and so $l_t = tk -y$ on $F_1$. It follows that the derivative of this function vanishes at $t=0$.

To complete the proof we need to check that the directional derivative in a linearly independent direction vanishes. One can consider $\frac{\partial \phi}{\partial s} (0,0)$. This case is similar. It then follows that $(0,0)$ is a critical point. 
\end{proof}

\subsection{Poincar\'e type metrics}\label{genptmets}

Consider the punctured unit (open) disk $B_1^* \subseteq \mathbb{C}$ with the metric
\begin{align}\label{localcusp} \frac{|dz|^2}{(|z|  \log |z|)^2}.
\end{align}
Here we use the notation $|dz|^2 = dx^2 + dy^2$, where $z = x + iy$. This is the standard cusp or Poincar\'e type metric on $B_1^*$. The associated symplectic form is
\begin{align}\label{stdcuspkahlerpotential} \frac{i dz \wedge d\overline{z}}{ |z|^2 \log^2 (|z|)} = 4  i \partial \overline{\partial} ( \log (- \log ( |z|^2) ) ).
\end{align}
Poincar\'e type metrics are K\"ahler metrics on $X \setminus D$ which near $D$ look like the product of the Poincar\'e type metric on $B_1^*$ with a metric on $D$. These metrics have a rich history of study. A central result is the existence of K\"ahler-Einstein metrics with such asymptotics, analogous to Yau's theorem in the compact case, by Cheng-Yau, Kobayashi and Tian-Yau in \cite{chengyau80}, \cite{rkobayashi84} and \cite{tianyau87}, respectively.

Auvray made a general definition of metrics with such singularities along a simple normal crossings divisor $D$ in a compact complex manifold $X$. That $D$ is simple normal crossings means that we can write $D = \sum_k D_k$, where each $D_k$ is smooth and irreducible, and the $D_{k}$ intersect transversely in the sense that for each choice $k_1, \cdots , k_l$ if distinct indices, we can around each point in $D_{k_1} \cap \cdots \cap D_{k_l}$ find a holomorphic chart $(U, z_1, \cdots, z_n)$ such that $D_{k_j} \cap U = \{ z_{j} = 0 \} \cap U$. Note that in particular $l$ is at most the dimension of $X$. Note that on each such chart $U$, we have a standard locally defined cusp metric whose associated $2$-form is given by 
\begin{align*} \omega_{cusp} = \sum_{j=1}^l \frac{i dz_j \wedge d \overline{z}_j }{(|z_j|  \log |z_j|)^2} + \sum_{j > l} i dz_j \wedge d \overline{z}_j  .
\end{align*}

Given such a divisor, one can for each $k$ define a model function $f_k$, which when patched together gives the model K\"ahler potential for a Poincar\'e type metric. More precisely, fix a holomorphic section $\sigma_k$ of $\mathcal{O} ( D_k)$ such that $D_k$ is the zero set of $\sigma_k$. Also fix a Hermitian metric $|\cdot|_k$ on $\mathcal{O} (D_k)$, which we assume satisfies $ | \sigma_k |_k \leq e^{-1}$. Thus, for each $\lambda$ sufficiently large, the function $f_k = \log ( \lambda - \log ( |\sigma_k|_k^2 ) )$ is defined on $X \setminus D_k$. 

Let $\omega_0$ be a K\"ahler metric on the whole of the compact manifold $X$. By the above we can, for sufficiently large $\lambda$, pick $A_k > 0$ such that if $f = \sum_k A_k f_k $, then $\omega_{f} = \omega_0 - i \partial \overline{\partial} f$ is a positive $(1,1)$-form on $X \setminus D$. Poincar\'e type metrics are then metrics on $X \setminus D$ defined by a potential with similar asymptotics to $f$ near $D$.
\begin{defn}[{\cite[Def. 0.1]{auvray14c},\cite[Def. 1.1]{auvray13}}]\label{poincaretypemetdefn} Let $X$ be a compact complex manifold and let $D$ be a simple normal crossings divisor in $X$. Let $\omega_0$ be a K\"ahler metric on $X$ in a class $\Omega \in H^2(X, \mathbb{R})$. A smooth, closed, real $(1,1)$ form on $X \setminus D$ is a Poincar\'e type K\"ahler metric if 

{
\itemize

\item $\omega$ is quasi-isometric to $\omega_{cusp}$. That is, for every chart $U$ as above, and every compact subset $K$ of $B_{\frac{1}{2}} \cap U$, there exists a $C$ such that throughout $K$, we have
\begin{align*} C \omega_{cusp} \leq \omega \leq C^{-1} \omega_{cusp}.
\end{align*} 
}

Moreover, the class of $\omega$ is $\Omega$ if
\itemize
\item $\omega = \omega_0 + i \partial \overline{\partial} \varphi$ for a smooth function $\varphi$ on $X \setminus D$ with $| \nabla_{\omega_f}^j \varphi |$ bounded for all $j \geq 1$ and $\varphi = O(f)$.
\end{defn}

\subsection{Poincar\'e type metrics on toric varieties}\label{toricpt}

From the works of Guillemin and Abreu in \cite{guillemin94jdg} and \cite{abreu98}, respectively, one can describe all $T^n$-invariant K\"ahler metrics in a given K\"ahler class through a space of strictly convex functions on the associated moment polytope. We will now describe how one can extend this to the case of metrics with mixed Poincar\'e and cone angle singularities along the torus-invariant divisors of a toric manifold. 

One way to view the correspondence between $T^n$-invariant K\"ahler metrics on compact toric manifolds and certain strictly convex functions on $P$ is the following. For each strictly convex function $u$ on $P$ which is smooth on $P^{\circ}$, the Legendre transform induces a map $\psi_u : P^{\circ} \times T^n \rightarrow (\mathbb{C}^*)^n$, which we can then think of as a map between the free orbits in the symplectic quotient $M_P$ and the complex quotient $N_P$ associated to $P$, respectively. The Guillemin boundary conditions for the function $u$ are the precise boundary conditions such that $\psi_u$ extends as a diffeomorphism $M_P \rightarrow N_P$ taking $[\omega_P] \in H^2(M_P, \mathbb{R})$ to $\Omega_P \in H^2(N_P, \mathbb{R})$. 

It will be convenient to encode the data of the singularities in a boundary measure again. Given a positive boundary measure there is a unique $l_i$ such that $P \subseteq l_i^{-1} ([0, + \infty) )$ for all $i$, and that \ref{coneanglemeasurepos} is satisfied with $r_i = 1$. We call the collection $l_1, \cdots, l_d$ the \textit{canonical defining functions} of $(P,d\sigma)$. For any bounded convex polytope $P$, we then define a space of symplectic potentials. 
\begin{defn}\label{coneanglepotentials} Let $P$ be a bounded convex polytope and let $d\sigma$ be a positive boundary measure for $P$. Let $l_i$ be the canonical defining functions for $(P,d\sigma)$. We define the space of symplectic potentials $\mathcal{S}_{P, d \sigma}$ to be the space of strictly convex functions $u \in C^{\infty} (P^{\circ}) \cap C^0 (P)$ satisfying
\begin{align*} u = \frac{1}{2} \sum_i l_i \log l_i + h,
\end{align*}for some $h \in C^{\infty} (P)$ and which further satisfies that the restriction of $u$ to the interior of any face of $P$ is strictly convex.
\end{defn}

In the case when $P$ is Delzant, $\mathcal{S}_{P, d \sigma}$ then precisely describes metrics with cone angle singularities along the torus-invariant divisors, the cone angle being prescribed by $d\sigma$. 
\begin{prop}[{\cite[Prop. 2.1]{datarguosongwang13}}] Let $P$ be a Delzant polytope with canonical measure $d \sigma^{0}$. Let $d\sigma$ be a positive boundary measure for $P$, so on each facet $F_i$ of $P$, $d\sigma$ satisfies 
\begin{align*} d\sigma_{| F_i} = r_i d\sigma^0_{| F_i}
\end{align*}
for some $r_i > 0$. Then through the Legendre transform, symplectic potentials $u \in \mathcal{S}_{P, d \sigma}$ induce metrics with cone singularities along the torus invariant divisors $D_i$ corresponding to the facets $F_i$. The cone angle singularity along $D_i$ is $2 \pi r_i$.
\end{prop}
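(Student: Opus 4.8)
The plan is to work in Guillemin's Legendre--transform picture and reduce the assertion to an explicit model computation in the direction transverse to each facet, then patch the local pictures together. Recall that a symplectic potential $u$ on $P$ determines a $T^n$-invariant K\"ahler metric on the free part of the associated toric variety via the Legendre transform: if $\phi$ denotes the Legendre dual of $u$, regarded as a function of logarithmic complex coordinates $z_j = y_j + i\theta_j$ (so $y = du$, $x = d\phi$, and $\mathrm{Hess}\,\phi = (\mathrm{Hess}\,u)^{-1}$), then the metric is $\sum_{j,k}\tfrac{\partial^2\phi}{\partial z_j\partial\bar z_k}\,dz_j\,d\bar z_k$. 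The Guillemin boundary behaviour $\tfrac12\sum_i l_i^0\log l_i^0 + (\text{smooth})$, with the $l_i^0$ the Delzant canonical defining functions, is precisely what forces this metric to extend smoothly across the divisors $D_i$. Here one has $u = \tfrac12\sum_i l_i\log l_i + h$ with $l_i = l_i^0/r_i$ --- the normalization forced by $l_i\wedge d\sigma = d\lambda$ together with $d\sigma|_{F_i} = r_i\,d\sigma^0|_{F_i}$ --- so one expects, and must prove, a controlled failure of smoothness governed by the $r_i$.

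First I would localize: fix a facet $F_i$ and a point in its relative interior. Since $P$ is Delzant, after an $\mathrm{SL}(n,\mathbb{Z})$ change of the linear coordinates and a translation we may assume $F_i\subset\{x_1 = 0\}$, that near the chosen point $P$ is $\{x_1\ge 0\}$ times an open subset of the remaining coordinates, and that $l_i^0 = x_1$ while each $l_j$ with $j\ne i$ is smooth and bounded below by a positive constant there. Then $u = \tfrac{1}{2r_i}x_1\log x_1 + g$ on this chart with $g$ smooth up to $\{x_1 = 0\}$ (it absorbs $\tfrac12\sum_{j\ne i}l_j\log l_j + h$ and the affine correction produced by rescaling $l_i$), and the hypothesis that $u|_{F_i}$ is strictly convex says that the $(x_2,\dots,x_n)$-block of $\mathrm{Hess}\,g$ is positive definite on $\{x_1 = 0\}$.

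The model computation is then short. Transversally, $y_1 = \partial u/\partial x_1 = \tfrac{1}{2r_i}\log x_1 + O(1)$, so $x_1\asymp e^{2r_i y_1}$, while $\partial^2 u/\partial x_1^2 \asymp \tfrac{1}{2r_i x_1}$; by a block-inverse estimate the corresponding entry of $\mathrm{Hess}\,\phi$ satisfies $\partial^2\phi/\partial z_1\partial\bar z_1 \asymp \tfrac{r_i}{2}x_1$. Taking the transverse holomorphic coordinate near $D_i$ to be $w = e^{z_1}$, so $D_i = \{w = 0\}$ and $|w|^2 = e^{2y_1}\asymp x_1^{1/r_i}$, the transverse part of the metric is asymptotic to a multiple of $|w|^{2r_i-2}|dw|^2$. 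Writing $w = \rho e^{i\theta}$ and setting $r = r_i^{-1}\rho^{r_i}$ turns this into $dr^2 + r^2(r_i\,d\theta)^2$; since $r_i\theta$ has period $2\pi r_i$, this is exactly an edge of cone angle $2\pi r_i$. Meanwhile the tangential block of $\mathrm{Hess}\,\phi$ tends, as $x_1\to 0$, to the inverse of $\mathrm{Hess}(g|_{F_i})$ --- a genuine smooth positive definite metric on the chart of $D_i$ --- and the mixed terms are of lower order, giving the transverse-product asymptotic $\omega\sim\omega_{\mathrm{cone}}\oplus\omega_{D_i}$ that defines a cone singularity of angle $2\pi r_i$ along $D_i$.

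Two further points complete the argument, and this is where the genuine work lies. First, the smooth remainder must be controlled: once everything is re-expressed in the holomorphic coordinate $w$, which is essentially an $r_i$-th power of the naive one, the Legendre transform of $g$ has to be shown to contribute only a conically smooth perturbation, quasi-isometric to the model and not altering the cone angle; this rests on uniform estimates, up to $\partial P$, for the inverse Hessian of $u$ and its derivatives, which are routine but not immediate. Second, the lower-dimensional faces --- in particular the vertices, where several $D_j$ meet --- must be treated: there the stronger hypothesis built into $\mathcal{S}_{P,d\sigma}$, that $u$ restricts to a strictly convex function on every face, lets the same analysis run simultaneously in each transverse direction and produces a product of cones, the $j$-th factor of angle $2\pi r_j$, normal to the stratum $\bigcap_j D_j$. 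Assembling the local models yields a metric on the toric variety with a cone singularity of angle $2\pi r_i$ along each $D_i$, as claimed.
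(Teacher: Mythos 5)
The paper offers no proof of this proposition: it is quoted directly from the cited reference (Datar--Guo--Song--Wang, Prop.~2.1), so there is no internal argument to compare yours against. Your sketch is the standard Legendre-transform proof and is correct in outline: the normalization $l_i = l_i^0/r_i$ forced by $l_i\wedge d\sigma = \pm\, d\lambda$ gives $u = \tfrac{1}{2r_i}\,l_i^0\log l_i^0 + (\text{smooth})$ near $F_i$, and the one-variable computation $u''\asymp (2r_i x_1)^{-1}$, $x_1\asymp |w|^{2r_i}$ yields the transverse model $|w|^{2r_i-2}|dw|^2$, i.e.\ cone angle $2\pi r_i$, with the tangential block handled by the block-inverse limit and the strict convexity of $u|_{F_i}$. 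The two points you defer --- uniform estimates for the remainder after passing to the holomorphic coordinate $w$, and the simultaneous analysis at higher-codimension strata --- are precisely where the substance of the cited proof lies, so as written this is a correct and faithful sketch of that argument rather than a complete self-contained proof.
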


Let $D$ be a not neccessarily irreducible torus-invariant divisor in $N_P$, so $D$ is a union of some of the $D_i$ as above. The goal of this section is to instead describe the precise conditions on the function $u$ such that $\psi_u$ induces a diffeomorphism such that $ (\psi_u^{-1})^* (\omega_P)$ is a metric on $N_P \setminus D$ with Poincar\'e type singularities along $D$ and cone angle single singularities along the remaining torus-invariant divisors. 

The model cusp metric on the unit punctured disk in $\mathbb{C}$ has associated K\"ahler form given by
\begin{align*} \frac{i dz \wedge d\overline{z}}{|z|^2 \log^2 ( |z|^2) } . 
\end{align*}
It is induced by the Legendre transform of the function
\begin{align*} - \log (x).
\end{align*}
This motivates the definition below of the space of Poincar\'e type metrics.

Let $P$ be a Delzant polytope with facets $F_1, \cdots, F_d$. We let $(N_P, \Omega_P)$ be the corresponding complex manifold and K\"ahler class associated to $P$, and let $D_i$ be the divisor in $N_P$ corresponding to the facet $F_i$. Suppose $d\sigma$ is a non-negative boundary measure for $P$. Let $\{i_1, \cdots , i_k\}$ be the subset of $\{ 1, \cdots , d \}$ on which $r_i$ vanishes, which, after relabelling of the $F_i$, we will assume is $1, \cdots, k$. Then we let $D$ denote the divisor $D_{1} + \cdots + D_{k}$ corresponding to the facets on which $d\sigma$ vanishes. 

Given a non-negative boundary measure $d\sigma$ for $P$, let $d\tilde{\sigma}$ be a positive boundary measure which agrees with $d\sigma$ on the facets where $d\sigma$ does not vanish. For a symplectic potential $v \in \mathcal{S}_{P, d \tilde{\sigma}}$ and positive real numbers $a_{1}, \cdots, a_{k} > 0$, define $ u_{\underline{a},v} : P^{\circ} \rightarrow \mathbb{R}$ by
\begin{align}\label{modelptpotentialeqn} u_{\underline{a}, v} = v + \sum_{i= 1}^k( -a_i \log l_i ).
\end{align}

In the author's thesis \cite{sektnanthesis}, it was shown that potentials of this form induce metrics with Poincar\'e type singularities along $D$. More precisely,
\begin{prop}\label{modelptpotential} Let $(P,d\sigma)$ be a weighted Delzant polytope, where $d\sigma$ is a non-negative boundary measure. Then through the Legendre transform, $u_{\underline{a},v} $ defines a K\"ahler metric on $N_P \setminus D$ with mixed Poincar\'e and cone angle singularities in the class $\Omega_P$. The Poincar\'e type singularity is along $D$, and the cone angle singularities are along the divisors $D_i$ with $i > k$, the cone angle singularity along $D_i$ being equal to that of the metric induced by $v$.
\end{prop}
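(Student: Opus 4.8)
<br>

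The goal is to prove Proposition~\ref{modelptpotential}: that the Legendre transform of $u_{\underline{a},v}$ defines a Kähler metric on $N_P \setminus D$ with the prescribed mixed Poincaré and cone angle singularities. My plan is to work locally, reducing to the two model behaviours and then patching. First I would record that $u_{\underline{a},v} = v + \sum_{i=1}^k(-a_i \log l_i)$ differs from $v \in \mathcal{S}_{P,d\tilde\sigma}$ by a sum of $-a_i\log l_i$ terms, each of which is strictly convex on $P^\circ$ and smooth there; hence $u_{\underline{a},v}$ is strictly convex on $P^\circ$ and the Legendre transform $\psi_{u_{\underline{a},v}}$ is a well-defined diffeomorphism from $P^\circ \times T^n$ onto its image, the open dense torus orbit in $N_P$. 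So the content is entirely about the behaviour as one approaches $\partial P$.

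The key step is the analysis near a point $p$ in the relative interior of a face $F$, say $F = F_{i_1}\cap\cdots\cap F_{i_m}$, where I split $\{i_1,\dots,i_m\}$ into those indices $\leq k$ (contributing a Poincaré factor) and those $> k$ (contributing a cone factor of angle determined by $v$). The strategy is to change coordinates so that the canonical defining functions $l_{i_j}$ of the relevant facets become the first $m$ coordinate functions $x_1,\dots,x_m$ near $p$ (this is exactly where the Delzant condition is used: at a vertex the conormals form a $\mathbb{Z}$-basis, and along a lower-dimensional face the same normalisation makes the $l_i$ extend to part of a global affine chart). In these coordinates, $u_{\underline{a},v}$ looks, modulo a smooth function and after the Delzant normalisation, like $\sum_{j:\,i_j\leq k} (-a_{i_j}\log x_j) + \tfrac12\sum_{j:\,i_j>k} r_{i_j}^{-1} x_j\log x_j + (\text{Guillemin terms}) + (\text{smooth})$. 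The Legendre transform then decouples, to leading order, into a product: each $-a\log x$ factor Legendre-transforms to the model cusp potential $-\log(x)$ up to rescaling (the scaling by $a$ amounts to a biholomorphism $z \mapsto z^{1/a}$ type change, or more precisely just rescales the fibre coordinate and the class contribution, which one checks does not affect the quasi-isometry class), giving exactly the Poincaré form $i\,dz\wedge d\bar z/(|z|^2\log^2|z|)$; each cone term Legendre-transforms to the standard $2\pi r_{i_j}$-cone metric as in the cited Proposition~\ref{datarguosongwang13}-style result; and the remaining coordinates behave smoothly by Guillemin's theorem applied to $v$ restricted to the transverse directions. I would then verify that the error terms (the smooth $h$ and the cross terms between blocks) are subleading, so that the resulting metric is quasi-isometric to $\omega_{cusp}$ along $D$ and has a potential satisfying the asymptotic and derivative bounds in Definition~\ref{poincaretypemetdefn}; the class is $\Omega_P$ because $v$ already induces a metric in $\Omega_P$ and adding $-a_i\log l_i$ changes the potential by $O(f)$ with bounded covariant derivatives, exactly matching Auvray's definition.

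Concretely, the steps in order are: (1) establish strict convexity of $u_{\underline{a},v}$ on $P^\circ$ and that $\psi_{u_{\underline{a},v}}$ is a diffeomorphism onto the open orbit; (2) fix a face $F$ and choose Delzant-adapted affine coordinates on $(\mathbb{R}^n)^*$ and dual coordinates on $\mathbb{R}^n$ so the relevant $l_i$ become coordinate functions; (3) in these coordinates split $u_{\underline{a},v}$ into a Poincaré block, a cone block, a Guillemin (transverse) block, and a smooth remainder; (4) compute the Legendre transform of each model block separately, identifying the cusp metric and the cone metric and citing Guillemin/Abreu for the transverse part; (5) bound the remainder and the block-interaction terms to conclude quasi-isometry to $\omega_{cusp}$, smoothness of the cone behaviour with the claimed angle, and the potential estimates of Definition~\ref{poincaretypemetdefn}; (6) assemble the local statements over a cover of $D$ and argue the global potential $u_{\underline{a},v}$ versus a fixed $\omega_0$-potential differs by $O(f)$ with controlled derivatives, hence the class is $\Omega_P$. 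The main obstacle I anticipate is step (5): controlling the interaction between a Poincaré direction and a cone direction meeting along a higher-codimension face, i.e. showing that the off-diagonal Hessian entries of $u_{\underline{a},v}$ (which blow up) still produce, after Legendre inversion, a metric that is uniformly quasi-isometric to the product model $\omega_{cusp}$ on compact subsets, without any degeneration along the stratum where both types of singularity coexist. This requires a careful estimate of the inverse Hessian near such strata, likely by a block-matrix argument showing the cross blocks are lower order relative to the diagonal blocks in the appropriate weighted sense; once that is in hand the rest is bookkeeping with Guillemin-type boundary conditions, which is why I would refer the detailed computations to \cite{sektnanthesis} rather than reproduce them here.
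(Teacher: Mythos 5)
The paper does not actually prove Proposition \ref{modelptpotential} in the text; it defers the argument entirely to \cite{sektnanthesis}. Your outline is the natural route one would take (and the one the cited reference follows in spirit): reduce to a local computation in Delzant-adapted coordinates near each face, split the potential into Poincar\'e, cone, and smooth blocks, Legendre-transform each block against its model, and then verify the quasi-isometry and the potential estimates of Definition \ref{poincaretypemetdefn}. So in strategy your proposal is sound, and I have no objection to steps (1)--(4) and (6).

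The one place where you stop short is exactly the step you flag as the main obstacle, (5), and it is worth noting that it resolves more cleanly than you anticipate. The Hessian of each summand $-a_i\log l_i$ is the rank-one form $a_i\,\frac{dl_i\otimes dl_i}{l_i^2}$, and likewise the Hessian of each Guillemin term $\tfrac12 l_i\log l_i$ inside $v$ is rank one in the direction $dl_i$. Hence in coordinates adapted to the face $F=F_{i_1}\cap\cdots\cap F_{i_m}$ (where $l_{i_j}=x_j$), the singular part of $\mathrm{Hess}(u_{\underline{a},v})$ is genuinely diagonal --- there are no blowing-up off-diagonal entries to estimate; all cross terms come from the smooth remainder $h$ of $v$ and are bounded. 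The Hessian is therefore asymptotically block-diagonal with diagonal entries of order $l_i^{-2}$ (Poincar\'e directions), $l_i^{-1}$ (cone directions), and $O(1)$ (transverse directions), and its inverse is uniformly comparable to the inverse of the diagonal model, which is precisely the quasi-isometry to $\omega_{\mathrm{cusp}}$ times the cone model. Without making this rank-one observation explicit your step (5) remains a promissory note, so as written the proof is incomplete at its decisive point; with it, the argument closes. A second, smaller point: you should also check the derivative bounds $|\nabla^j_{\omega_f}\varphi|\leq C$ and $\varphi=O(f)$ required for the class statement in Definition \ref{poincaretypemetpdefn_dummy}~--- more precisely, in Definition \ref{poincaretypemetdefn} --- which again follow from the same explicit diagonal structure but are not automatic from quasi-isometry alone.
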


This serves as model Poincar\'e type potentials. More generally, the space of $T^n$-invariant Poincar\'e type metrics in a given class can be described by functions satisfying the following definition.

Also, recall that associated to $d\tilde{\sigma}$ there is a canonical choice of defining functions $l_i$ for $P$, whose zero sets intersect $P$ in facets $F_i$. For a non-negative boundary measure $d\sigma$ we can get canonical defining functions for the $i$ such that $d\sigma_{|F_i} \neq 0$ by the same requirement on these facets. 

For the functions $u$ and $u_{\underline{a},v}$ below we will let $U$ and $U_{\underline{a},v}$ denote their respective Hessians. Given a non-negative boundary measure $d\sigma$, we let $d_{PT} : P \rightarrow \mathbb{R}$ be a positive function on $P$ which is smaller than $1$ everywhere, and which agrees with the distance function to the Poincar\'e type facets near these facets. The Poincar\'e type facets are the facets on which $d\sigma$ vanishes.

\begin{defn}\label{toricptpotentialsdefn} Let $P$ be a polytope with facets $F_1, \cdots, F_d$ and let $d\sigma$ be a non-negative boundary measure. Let $l_i$ be the canonical defining functions for the $i$ such that $d\sigma$ does not vanish along $F_i$. Define $\mathcal{S}_{P,d \sigma}$ to be the space of smooth strictly convex functions $u : P^{\circ} \rightarrow \mathbb{R}$ that can be written as
\begin{align}\label{uismodelplush} u = \frac{1}{2} \sum_{i : d\sigma_{|F_i} \neq 0}l_i \log l_i + h,
\end{align}
for some $h \in C^{\infty} (  P \setminus \cup_{i : d\sigma_{|F_i} = 0} F_i )$, and which moreover satisfy that there is a model potential $u_{\underline{a},v}$ for $(P, d\sigma)$ such that
\begin{itemize} \item $u$ restricted to each facet where the boundary measure does not vanish is strictly convex,
\item $| u | \leq C ( - \log (d_{PT}))$ for some $C>0$,
\item there is a $c >0$ such that 
\begin{align}\label{potentialinequality} c^{-1} U_{\underline{a}, v}\leq  U  \leq c  U_{\underline{a}, v},
\end{align}
\item for all $i \geq 1$, we have that $| \nabla^i u |_{u_{\underline{a},v}}$ and $| \nabla^i u_{\underline{a},v} |_{u_{\underline{a},v}}$ are mutually bounded.
\end{itemize}
Here $\nabla u = \nabla^1 u$ is the gradient of $u$ with respect to $u_{\underline{a},v}$, $\nabla^i$ denotes the higher derivatives with respect to the Levi-Civita connection of $u_{\underline{a},v}$ and $|\cdot|_{u_{\underline{a},v}}$ denotes the norm on the higher tensor bundles of $TP^{\circ}$ with respect to $u_{\underline{a},v}$.
\end{defn}

For a Delzant polytope, elements of $\mathcal{S}_{P, d\sigma}$ also give K\"ahler metrics with mixed Poincar\'e type and cone singularities. For the proof, see \cite[Prop. 3.10]{sektnanthesis}.

\begin{prop}\label{allptpotsprop} Suppose $P$ is a Delzant polytope and let $d\sigma$ be a non-negative boundary measure. Then for all $u \in \mathcal{S}_{P, d\sigma}$, $u$ defines through the Legendre transform a K\"ahler metric on $N_P$ in the class $\Omega_P$ with mixed Poincar\'e type and cone angle singularities, the singularity being prescribed by $d\sigma$. 

Conversely, if $\omega \in \Omega_P$ is the K\"ahler form of a $T^n$-invariant metric on $N_P$ of Poincar\'e type along a torus-invariant divisor $D$, then it is induced by a function $u$ on $P^{\circ}$ satisfying definition \ref{toricptpotentialsdefn}.
\end{prop}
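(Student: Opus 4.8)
The plan is to pass both directions through the local model potentials $u_{\underline{a},v}$ of \ref{modelptpotentialeqn} by means of the Legendre transform, taking Proposition \ref{modelptpotential} as the base case and using the comparison estimates built into Definition \ref{toricptpotentialsdefn} to move from the model to a general element of $\mathcal{S}_{P,d\sigma}$.

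For the first assertion, fix $u \in \mathcal{S}_{P,d\sigma}$ together with a model potential $u_{\underline{a},v}$ for $(P,d\sigma)$ against which $u$ is comparable, as provided by the definition, and let $\omega_{\underline{a},v}$ be the K\"ahler form it induces. By Proposition \ref{modelptpotential}, $\omega_{\underline{a},v}$ lies in $\Omega_P$, is of Poincar\'e type along $D$, and has cone angle singularities along the remaining $D_i$. The form $\omega_u$ induced by $u$ is a priori defined only on the open orbit, where it differs from $\omega_{\underline{a},v}$ by $i\partial\bar\partial\varphi$, with $\varphi$ the difference of the Legendre transforms of $u$ and $u_{\underline{a},v}$; I would first check that this relation, and hence $\omega_u$, extends across $N_P\setminus D$. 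Now the Hessian comparison \ref{potentialinequality} says exactly that the Riemannian metric associated to $u$ in symplectic coordinates is two-sidedly bounded by that of $u_{\underline{a},v}$, which after pulling back by $\psi_u$ reads $c^{-1}\omega_{\underline{a},v}\le\omega_u\le c\,\omega_{\underline{a},v}$; together with the quasi-isometry of $\omega_{\underline{a},v}$ to $\omega_{cusp}$ supplied by Proposition \ref{modelptpotential} this gives the quasi-isometry of $\omega_u$ to $\omega_{cusp}$ near $D$, while near the facets not in $D$, where $u = \tfrac12\sum_{i:d\sigma_{|F_i}\ne 0}l_i\log l_i + h$ with $h$ smooth as in Definition \ref{coneanglepotentials}, $\omega_u$ inherits the correct cone angles. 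The growth bound $|u|\le C(-\log d_{PT})$ translates into $\varphi = O(f)$ and the mutual boundedness of the covariant derivatives $|\nabla^i u|_{u_{\underline{a},v}}$ and $|\nabla^i u_{\underline{a},v}|_{u_{\underline{a},v}}$ into boundedness of $|\nabla^j_{\omega_{\underline{a},v}}\varphi|$ for $j\ge 1$; by Definition \ref{poincaretypemetdefn} this makes $\omega_u$ a Poincar\'e type K\"ahler metric in the class $\Omega_P$. The stratum-wise strict convexity of $u$ then ensures, by the same argument as in the smooth and cone-angle Guillemin--Abreu theory (\cite{guillemin94jdg}, \cite{abreu98}, \cite{datarguosongwang13}), that $\psi_u$ extends to a diffeomorphism onto $N_P\setminus D$, so that $\omega_u$ is a genuine metric, also in its restriction to the $D_i$ with $i>k$.

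For the converse, let $\omega\in\Omega_P$ be the form of a $T^n$-invariant Poincar\'e type K\"ahler metric along a torus-invariant divisor $D = D_1 + \cdots + D_k$. On the open orbit, $T^n$-invariance gives $\omega = i\partial\bar\partial\phi$ with $\phi$ a smooth strictly convex function of the logarithmic coordinates $\xi$, whose moment map $\nabla\phi$ takes values in $P^\circ$. Here I would first show that $\nabla\phi$ is onto $P^\circ$ and $\psi_\phi$ a bijection onto the open orbit: the correct asymptotic slopes of $\phi$ in the conormal directions of $P$ are forced by $[\omega]=\Omega_P$, namely by the Guillemin and cone-angle asymptotics along the facets not in $D$ and by quasi-isometry to $\omega_{cusp}$ together with $\varphi = O(f)$ along the facets of $D$. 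Setting $u := \phi^*$, a strictly convex function on $P^\circ$ whose Hessian is the inverse of that of $\phi$, the quasi-isometry of $\omega$ to $\omega_{cusp}$ expressed against any fixed model $u_{\underline{a},v}$ gives \ref{potentialinequality}, the decay of $\varphi$ (equivalently, of $u - u_{\underline{a},v}$ up to lower order) gives $|u|\le C(-\log d_{PT})$ and the derivative bounds, and the cone or smooth behaviour of $\omega$ along $D_i$ for $i>k$ forces $u = \tfrac12\sum_{i:d\sigma_{|F_i}\ne 0}l_i\log l_i + h$ with $h$ smooth up to, and strictly convex on, those faces. Hence $u\in\mathcal{S}_{P,d\sigma}$ and induces $\omega$.

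The main obstacle is the analysis at the lower-dimensional strata of $P$ --- the faces and vertices where Poincar\'e type facets meet cone-angle facets --- where one must simultaneously control cusp degeneration in some coordinate directions and cone-type (or smooth Guillemin) behaviour in the others, and verify that the comparison, growth and derivative estimates of Definition \ref{toricptpotentialsdefn} correspond exactly to the quasi-isometry and covariant-derivative estimates of Definition \ref{poincaretypemetdefn} under the nonlinear change of variables given by the Legendre transform. A secondary difficulty, specific to the converse, is proving that the moment image of $\nabla\phi$ exhausts $P^\circ$ and that $\psi_\phi$ is a diffeomorphism onto the open orbit; this is precisely the point at which one uses the hypothesis $[\omega]=\Omega_P$, rather than merely that $\omega$ has Poincar\'e type singularities along $D$.
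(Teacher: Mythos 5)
Your strategy --- reduce everything to the model potentials $u_{\underline{a},v}$ via Proposition \ref{modelptpotential} and then transfer the properties using the comparison estimates that Definition \ref{toricptpotentialsdefn} is built around --- is exactly the route the paper intends; note that the paper does not prove this proposition itself but cites \cite[Prop. 3.10]{sektnanthesis}, and the way Definitions \ref{coneanglepotentials} and \ref{toricptpotentialsdefn} are phrased makes clear that the thesis proof is organised the same way. So the approach is not in question.

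The issue is that, as written, your text is a plan rather than a proof: every step that carries real analytic content is introduced with ``I would first check/show'' and then deferred, including the two points you yourself single out at the end. Concretely, three things remain to be done. First, the extension of $\psi_u$ to a diffeomorphism of $M_P$ minus the preimage of the Poincar\'e facets onto $N_P\setminus D$, and the behaviour at the mixed strata where cusp directions meet cone-angle or smooth directions, is the heart of the forward direction; the comparison $c^{-1}U_{\underline{a},v}\le U\le c\,U_{\underline{a},v}$ alone controls the metric on the open orbit but does not by itself give the extension across the lower-dimensional orbits, which needs the Guillemin-type expansion \ref{uismodelplush} together with the face-wise strict convexity. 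Second, the translation between the covariant-derivative bounds on $u$ relative to $u_{\underline{a},v}$ (taken on $P^{\circ}$) and the bounds on $|\nabla^j_{\omega_f}\varphi|$ required by Definition \ref{poincaretypemetdefn} (taken on $N_P\setminus D$, for $\varphi$ the difference of the \emph{Legendre transforms}) passes through the nonlinear change of variables $x\mapsto \nabla\phi$, and since the gradient maps of $u$ and $u_{\underline{a},v}$ differ, this is a genuine computation, not a restatement. Third, in the converse direction the surjectivity of the moment map $\nabla\phi$ onto $P^{\circ}$ and the identification of the asymptotic slopes of $\phi$ from the hypotheses $[\omega]=\Omega_P$ and quasi-isometry to $\omega_{cusp}$ is asserted but not argued. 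None of these steps is expected to fail, but until they are carried out the proposal establishes only that the proposition is plausible by the intended method.
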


\section{Statement of result}\label{genprops}
We now come to the main part of the article, where we investigate weighted polytope stability for quadrilaterals. We begin by stating the conventions we will use.

Let $Q$ be the quadrilateral with vertices  $v_1 = (0,0), v_2 = (1,0), v_3 = (1+p,q)$ and $v_4=(0,k)$, for some $q,k>0$ and $p> \textnormal{max } \{ - \frac{q}{k}, -1 \}$. Then $Q$ is a convex quadrilateral, and all quadrilaterals can be mapped to such a quadrilateral via a translation and a linear transformation. When the parameters are rational, this is a rational Delzant polytope, and so corresponds to a toric orbifold surface $X_Q$. Since it is a quadrilateral, $b_2 (X_Q) = 2$.\footnote{In general, a two dimensional rational Delzant polytope with $d$ edges is the moment polytope of a toric orbifold surface with $b_2 = d-2$.} The edges $E_1, \cdots, E_4$ of $Q$ are given as $l_i^{-1} (0)$, where 
\begin{align*} l_1(x,y) &= y,\\
 l_2(x,y) &= -q x + p y + q,\\
 l_3(x,y) &= (q-k)x- (1+p)y + k(1+p),\\
 l_4(x,y) &= x,
\end{align*}
and $Q = \bigcap_i l_i^{-1}([0, \infty) )$. The canonical measure $d\sigma$ on $\partial Q$ associated to these defining equations is thus given by 
\begin{align*} d \sigma_{| E_1} &= dx,\\
d \sigma_{| E_2} &= \frac{1}{q} dy,\\
d \sigma_{| E_3} &= (1+p) dx,\\
d \sigma_{| E_4} &= dy.
\end{align*}
We will identify the weight $\underline{r} = (r_1, \cdots, r_4)$, and so also the corresponding measure, with a formal sum $\sum_i r_i E_i$. Thus, for example, $(\frac{1}{2},\frac{1}{2},0,0)$ is identified with $\frac{1}{2} E_1 + \frac{1}{2} E_2$.

The property describing the explicit numbers $r_0$ and $r_1$ of theorem \ref{mainquadthm} is the following. $Q$ has two pairs of opposite sides. Let $\phi (s,t)$ and $\psi(s,t)$ denote the polynomials of lemma \ref{polylem} for these two pairs of edges. The domains of these functions each have two points which correspond to affine functions, i.e. the crease is exactly an edge of $Q$. These points are opposite vertices of $[0,1] \times [0,1]$, and, after possibly replacing e.g. $\phi(s,t)$ with $\phi(s,1-t)$, we can take these to be $(0,0)$ and $(1,1)$. 

Similarly, we can also assume that in the case when $d\sigma$ vanishes on two adjacent sides, $(0,0)$ in each domain is the point corresponding to the crease being on an edge with vanishing boundary measure. In the case of $d\sigma$ vanishing on two opposite edges, we can assume $\phi$ is the function parameterising the Donaldson-Futaki invariant of simple piecewise linear functions with crease along the \textit{other} pair of opposite edges. In particular, the points $(0,0)$ and $(1,1)$ in the domain of $\phi$ correspond to simple piecewise linear functions with crease on an edge where $d\sigma$ vanishes.

In fact, lemma \ref{critpts0bdry} implies that the points where the corresponding crease is an edge with vanishing boundary measure are critical points of $\phi$ or $\psi$. In particular, we get that the vanishing of the determinant at such a point is an invariant notion, i.e. it does not depend on the scale we used in defining $\phi$ and $\psi$. 

\begin{lem} Let $d\sigma_r$ be the boundary measure for $Q$ corresponding to $rE_i + (1-r) E_j$ for edges $E_i,E_j$ of $Q$, and let the polynomials of \ref{polylem} for this boundary measure be $\phi_r$ and $\psi_r$. Then the determinant of the Hessian of $\phi_r$ or $\psi_r$ at a point in $[0,1] \times [0,1]$ is quadratic in $r$.
\end{lem}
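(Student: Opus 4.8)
The plan is to exploit the fact that all the weighted-polytope data entering $\phi_r$ and $\psi_r$ depends affine-linearly on $r$, so that at any fixed point the Hessian of $\phi_r$ (resp. $\psi_r$) in $(s,t)$ is a $2\times 2$ matrix whose entries are affine in $r$, and hence its determinant is a polynomial of degree at most two in $r$.

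First I would record that the boundary measure $d\sigma_r$ attached to $rE_i+(1-r)E_j$ is affine-linear in $r$: writing $d\sigma_{E_i}$ for the canonical measure supported on the edge $E_i$ (and likewise on $E_j$), we have $d\sigma_r = r\,d\sigma_{E_i} + (1-r)\,d\sigma_{E_j}$. Consequently, exactly as in the proof of Lemma~\ref{setofstblwtslem}, the associated affine function $A_r$ is affine-linear in $r$, since it is the solution of the fixed linear system expressing that $\mathcal{L}_{A_r}$ annihilates all affine functions, and in that system only the right-hand side (through $f\mapsto \int_{\partial Q} f\,d\sigma_r$) depends on $r$, and does so affinely, while the coefficient matrix (coming from $f\mapsto\int_Q A f\,d\lambda$) is independent of $r$. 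Next I would note that $l_{s,t}$, and hence $\textnormal{SPL}(l_{s,t})=\max\{0,l_{s,t}\}$, is built purely from the two chosen edges and the parameters $(s,t)$ and does not involve the measure, and that the regions $Q\cap\{l_{s,t}\geq 0\}$ and $\partial Q\cap\{l_{s,t}\geq 0\}$ over which the integrals defining $\phi_r$ are taken do not move as $r$ varies. Putting these together,
\begin{align*} \phi_r(s,t) \;=\; \mathcal{L}_r(\textnormal{SPL}(l_{s,t})) \;=\; \int_{\partial Q\cap\{l_{s,t}\geq 0\}} l_{s,t}\,d\sigma_r \;-\; \int_{Q\cap\{l_{s,t}\geq 0\}} A_r\, l_{s,t}\,d\lambda,
\end{align*}
which is affine-linear in $r$ for each fixed $(s,t)$; so $\phi_r=\alpha+r\beta$ for polynomials $\alpha,\beta$ in $(s,t)$ that do not depend on $r$ (each of bidegree $(3,3)$ by Lemma~\ref{polylem}), and likewise $\psi_r=\gamma+r\delta$.

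The remaining step is pure linear algebra: fixing a point $(s_0,t_0)\in[0,1]\times[0,1]$, the Hessian in $(s,t)$ satisfies $\textnormal{Hess}\,\phi_r(s_0,t_0)=\textnormal{Hess}\,\alpha(s_0,t_0)+r\,\textnormal{Hess}\,\beta(s_0,t_0)=:M+rN$, a symmetric $2\times 2$ matrix affine in $r$, and expanding $\det(M+rN)=\det M + r\,(M_{11}N_{22}+M_{22}N_{11}-2M_{12}N_{12}) + r^2\det N$ exhibits the determinant as a polynomial in $r$ of degree at most two; the argument for $\psi_r$ is identical.

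I do not expect a genuine obstacle in this lemma: the only points that need a little care are the affine dependence of $A_r$ on $r$ — which is inherited from $d\sigma_r$ exactly as in Lemma~\ref{setofstblwtslem} — and the observation that the domains of integration defining $\phi_r$ are independent of $r$, so that the whole expression for $\phi_r$, and not merely its values at the special points of $[0,1]\times[0,1]$, is affine in $r$.
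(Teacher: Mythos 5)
Your proposal is correct and follows essentially the same route as the paper: the paper's proof simply observes that $\phi_r$ and $\psi_r$ are linear in $r$ (by their definition and Lemma \ref{setofstblwtslem}), hence so are the second derivatives in $(s,t)$, and the determinant is therefore quadratic in $r$. You have merely spelled out the details the paper leaves implicit — the affine dependence of $A_r$ on $r$, the $r$-independence of the integration domains, and the $2\times 2$ determinant expansion.
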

\begin{proof} From their definition and lemma \ref{setofstblwtslem}, $\phi_r$ and $\psi_r$ are linear in $r$, and hence so are all their second derivatives with respect to $s$ and $t$. Hence the determinant is of degree $2$ in $r$.
\end{proof}

We can then finally characterise what the $r_0$ and $r_1$ in theorem \ref{mainquadthm} are. They are given as the end-points of the intersection of the two regions where $\phi_r$ and $\psi_r$ have \textit{non-negative} determinant at the points corresponding to simple piecewise linear functions with crease an edge with $0$ boundary measure. As remarked above, this does not depend on our choice of scale for $\phi$ and $\psi$. 

In \cite[App. B]{ACG15}, Apostolov-Calderbank-Gauduchon showed that unless $Q$ is a parallelogram, it has both unstable and stable weights, when all weights are positive. From theorem \ref{mainquadthm}, we also get a result about the set of unstable weights for quadrilaterals, now allowing weights to be $0$. The vertices of $\sum r_i = 1$, corresponding to measures supported on one edge only, are always unstable. Thus the set of unstable weights can have at most four connected components. This is generically the case, but in the case of parallel sides there is different behaviour. Specifically, we have the following.

\begin{cor} Let $Q$ be a quadrilateral. Then the number of connected components of the unstable set is
\begin{itemize} \item $4$ if $Q$ has no parallel sides,
\item $3$ if $Q$ is a trapezium which is not a parallelogram,
\item $2$ if $Q$ is a parallelogram.
\end{itemize}
\end{cor}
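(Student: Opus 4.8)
The plan is to regard the unstable set as the complement of a convex subset of a $3$-simplex, and to count its connected components by reducing, through a chain of deformation retractions, to a combinatorial count on a small graph. Write $T = \{\underline r \in \mathbb{R}^4_{\geq 0} : \sum_i r_i = 1\}$ for the simplex of normalised weights; I identify its vertices with the single-edge weights $E_1,\dots,E_4$, and its $1$-skeleton edges $e_{ij}$ are the weights $(1-r)E_i + r E_j$ with $r \in [0,1]$. Let $S \subseteq T$ be the stable set and $U = T \setminus S$ the unstable set.

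First I would collect what is known about $S$. By Lemma \ref{setofstblwtslem} it is convex. Since adjacent sides of a convex quadrilateral are never parallel, Theorem \ref{mainquadthm} applied to the four adjacent pairs of edges shows that each of $e_{12}, e_{23}, e_{34}, e_{41}$ carries a nonempty stable subinterval lying in its relative interior and avoiding the two vertices. Taking the convex hull of the stable subintervals on $e_{12}$ and $e_{23}$ (two non-parallel edges of the facet $F_{123}$ of $T$) produces a two-dimensional subset of $S$ whose relative interior lies in the relative interior of $F_{123}$; likewise on $e_{34}, e_{41}$ and $F_{134}$. Since $F_{123} \neq F_{134}$, and a segment joining points of the relative interiors of two distinct facets meets the interior of $T$, it follows that $S$ is three-dimensional, hence has nonempty interior in $T$; the same argument on each facet $F$ shows $S \cap F$ has nonempty relative interior in $F$. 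From Theorem \ref{mainquadthm} and the already-noted instability of the single-edge weights I also get that $S \cap e_{ij}$ is a subinterval of $e_{ij}$ avoiding its two endpoints, and that it is empty precisely when $E_i \parallel E_j$.

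Next comes the topological reduction, in three passes. Fix $p$ in the interior of $S$; convexity forces every ray from $p$ to meet $S$ in an initial subsegment, so pushing each point of $U$ outward along the ray from $p$ to $\partial T$ is a deformation retraction of $U$ onto $U \cap \partial T$. On each facet $F$, the convex set $S \cap F$ has nonempty relative interior and meets $\partial F$ only along sub-arcs of the non-parallel edges of $F$; the analogous radial retraction inside $F$ from a relative-interior point of $S \cap F$ retracts $F \setminus S$ onto $\partial F \setminus S$, is the identity on $\partial F$, and so these paste over the four facets to a deformation retraction of $\partial T \setminus S$ onto $(\text{$1$-skeleton of }T) \setminus S$. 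Finally, on each non-parallel edge $e_{ij}$ the set $e_{ij} \setminus S$ is a pair of half-open intervals, one hanging off each of $E_i$ and $E_j$; retracting these onto their endpoints leaves exactly the graph $G$ on the vertex set $\{E_1,\dots,E_4\}$ with an edge joining $E_i$ to $E_j$ iff $E_i \parallel E_j$. Thus $U$ has as many connected components as $G$.

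The last step is to count $G$. In a convex quadrilateral parallel sides are opposite, so $G$ is a subgraph of the two-edge matching $\{E_1 E_3\},\{E_2 E_4\}$: if $Q$ has no parallel sides, $G$ is four isolated vertices, giving $4$ components; if $Q$ is a trapezium but not a parallelogram, $G$ has one edge and two isolated vertices, giving $3$; and if $Q$ is a parallelogram, $G$ is the full matching, giving $2$, as claimed. The step I expect to require the most care is the topological reduction, and within it the verification that $S$ and each $S \cap F$ is convex with nonempty (relative) interior, together with the gluing of the per-facet retractions: these are exactly the places where one uses both the convexity from Lemma \ref{setofstblwtslem} and the precise behaviour of $S$ on the $1$-skeleton from Theorem \ref{mainquadthm}, and everything else is either a direct appeal to those results or the elementary count just given.
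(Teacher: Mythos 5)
Your proposal is correct, and it reaches the count by a genuinely different route from the paper. The paper argues case by case: for a quadrilateral with no parallel sides it observes that convexity of the stable set (Lemma \ref{setofstblwtslem}) together with a stable weight on each edge of the simplex forces $S$ to contain a sub-simplex whose complement in $T$ has four components; for a trapezium it notes that the simplex edge corresponding to the two parallel sides is wholly unstable, merging two of those components; and for a parallelogram it invokes an extra geometric input -- the product of extremal potentials on $\mathbb{P}^1$ with a Poincar\'e type singularity at one fixed point and a cone singularity at the other -- to show the unstable set is \emph{exactly} the two opposite edges $e_{13}\cup e_{24}$ of the simplex. Your uniform deformation-retraction-to-a-graph argument needs only Theorem \ref{mainquadthm} on the six simplex edges, convexity, and the instability of the vertex weights, so it dispenses with the product-metric construction entirely and treats all three cases at once; it also justifies more carefully than the paper does that each corner region of $U$ is a \emph{single} component (the paper's ``thus the unstable set does too'' elides exactly the connectivity of $U$ within each component of $T\setminus\Sigma$, which your radial retractions supply). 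The trade-off is that the paper's parallelogram argument yields the stronger statement that the unstable set is precisely $e_{13}\cup e_{24}$, which your homotopy-equivalence argument does not recover -- but the corollary only asks for the component count, so nothing is lost. Two small points worth making explicit if you write this up: the retraction bijects \emph{path} components, so you should add the one-line observation that the retraction $R$ and the connecting homotopy paths together identify the connected components of $U$ with those of the finite graph $G$; and on a simplex edge $e_{ij}$ with $r_0=0$ (or $r_1=1$) one of your two ``half-open intervals'' degenerates to the single vertex, which is harmless but should be acknowledged.
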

\begin{proof} If $Q$ has no parallel sides, then theorem \ref{mainquadthm} implies that there is a stable weight on each edge of the $3$-simplex $\sum_i r_i = 1$. Since the stable set is a convex set, it follows that the stable set contains a sub-simplex whose complement has $4$ connected components. Thus the unstable set does too. 

If $Q$ is a trapezium, but not a parallelogram, then the weights along the edge corresponding to weights which are non-zero only on the two parallel sides are all unstable. This reduces the number of connected components by one. 

Finally, if $Q$ is a parallelogram, the unstable set is precisely the two edges of the simplex $\sum r_i$ corresponding to having zero weights on two opposite edges of $Q$, which has two connected components. This follows because whenever $d\sigma$ does not vanish on two opposite edges, then one can use the product of the extremal potentials for $\mathbb{P}^1$ with Poincar\'e type singularity at one fixed point and cone angle singularity at the other fixed point to give an extremal potential for $Q$. Hence the unstable weights for a parellogram are precisely the ones vanishing on opposite edges of $Q$.
\end{proof}

The method of proof of theorem \ref{mainquadthm} is as follows. We first show that given any weights, there is a \textit{formal ambitoric solution}, unless a simple condition necessary for stability is violated. A formal solution is a matrix-valued function $H^{ij}$ with the correct boundary conditions associated to $(Q,d\sigma)$ and for which $H^{ij}_{\textnormal{ } ij}$ is affine, but it may not be positive-definite everywhere in $Q^{\circ}$. We then show that stability is equivalent to the positive-definiteness of the formal solution. We also show that in this case $H^{ij}$ is in fact the inverse Hessian of a symplectic potential, so that in the case where $Q$ is Delzant this is equivalent to the existence of a genuine extremal metric on the corresponding toric orbifold.


\section{Formal solutions for quadrilaterals}\label{formalsols}

We begin this section by reviewing the construction of Apostolov-Calderbank-Gauduchon, which we will refer to as the ACG construction. It will suffice for us to describe the construction only briefly. In particular, we will omit a lot of the formulae that are not directly used. However these can be found in \cite[Sect. 3.2]{ACG15}. 

Given a quadrilateral $Q$ with no parallel edges, there is a $1$-parameter family of conics $\mathcal{C} (Q)$ such that the edges of $Q$ lie on tangent lines to $C (Q)$. Indeed, this condition just fixes four points on a dual conic $\mathcal{C}^* (Q)$, and there is a $1$-parameter family of conics going through these four points. Given such a conic, we can swipe out the quadrilateral $Q$ by taking the intersection of two tangent lines to $C(Q)$, provided we avoid having to use the tangent line to a point of $C(Q)$ at infinity.

Assuming this holds, we then get a new set of coordinates $(x,y)$ on $Q$, by parameterising $C(Q)$ and identifying a point in $(x,y) \in C(Q) \times C(Q)$ with the intersection of the tangent lines to $C(Q)$ at $x$ and $y$. The map is then well-defined away from the diagonal, and so to avoid any ambiguity we require $x > y$, so that $Q$ is the image under this map of a product of intervals $D = [\alpha_0, \alpha_{\infty}] \times [\beta_0, \beta_{\infty}]$ with 
\begin{align}\label{alphabetaincrease} \alpha_0 < \alpha_{\infty} < \beta_0 < \beta_{\infty}.
\end{align}
This will be \textit{positive ambitoric coordinates} for a quadrilateral $Q$.

Another way one could obtain new coordinates for a quadrilateral $Q$ is the following. Take a line $L$ with two marked points $p_1,p_2$. One can then parameterise all the lines going through $p_1$ and $p_2$, respectively, and take their intersections. This is well-defined provided we don't use the line $L$ itself. For a given quadrilateral $Q$, there are two pairs $(F_1,F_1')$ and $(F_2,F_2')$ of opposite sides of $Q$. These coordinates are then obtained by letting $p_i$ be the point corresponding to the intersection of $F_i$ and $F_i'$. We call these coordinates \textit{negative ambitoric coordinates}. This gives us a well-defined coordinate system provided the line containing $p_1$ and $p_2$ does not pass through the interior of the quadrilateral. Allowing one of the points $p_i$ to be at infinity gives trapezia, whereas allowing the line to be the line at infinity gives parallelograms. Again, we can assume this map is defined on some product $D = [\alpha_0, \alpha_{\infty}] \times [\beta_0, \beta_{\infty}]$ of closed intervals satisfying \ref{alphabetaincrease}.

Thus given the choice of such data, we get a map $\mu^{\pm}$, depending on whether we are considering positive or negative ambitoric coordinates. These send $D$ to quadrilaterals $Q^{\pm}$. For rational parameters, \cite{ACG15} showed that these were coordinates arising from what they call an ambitoric structure on a $4$-orbifold. However, the maps can also be seen as simply giving new coordinates for quadrilaterals.

\begin{rem}Any given quadrilateral can admit multiple ambitoric coordinate systems, depending on the choice of data above, and it can also admit both positive and negative ambitoric coordinates.
\end{rem}

We now fix ambitoric coordinates as above, either positive or negative, and let $A, B$ be \textit{quartic} polynomials such that
\begin{align}\label{boundcons1} A(\alpha_0) &= 0, A'(\alpha_0) = r_{\alpha_0},\nonumber \\
A(\alpha_{\infty}) &=0,  A'(\alpha_{\infty}) = r_{\alpha_{\infty}}, \nonumber \\
B(\beta_0) &= 0,  B'(\beta_{0}) = r_{\beta_{0}}, \\
B(\beta_{\infty}) &= 0,  B'(\beta_{\infty}) = r_{\beta_{\infty}}\nonumber,
\end{align}
and 
\begin{align}\label{boundcons2} A + B = q \pi.
\end{align}
Here the $r_{\gamma}$ are non-negative real numbers, $q (z) = q_0 z^2 + 2 q_1 z + q_2$ is a quadratic, positive on $[\alpha_0, \alpha_{\infty}] \times [\beta_0,\beta_{\infty}]$, which is fixed by the choice of ambitoric coordinates for $Q$, and $\pi$ is some other quadratic. This uniquely determines $A$ and $B$, as these are $10$ equations for $10$ unknowns. It was shown in \cite{ACG15} that these are in fact independent conditions.

Given $A,B$ satisfying the above, we can define $\mathfrak{t}$-invariant metrics on $D^{\circ} \times \mathfrak{t}$ by
\begin{align*} g_{\pm} =& \bigg( \frac{x-y}{q(x,y)} \bigg)^{\pm 1} \bigg( \frac{dx^2}{A(x)} + \frac{dy^2}{B(y)} + A(x) \big( \frac{y^2 d\tau_0 + 2 y d\tau_1 + d\tau_2}{(x-y) q(x,y)} \big)^2  + B(y) \big( \frac{x^2 d\tau_0 + 2 x d\tau_1 + d\tau_2}{(x-y) q(x,y)} \big)^2 \bigg) ,
\end{align*} 
provided $A,B$ are positive throughout $D^{\circ}$. Here $q(x,y)$ denotes $q_0 x y + q_1 (x+y) + q_2$ and $(\tau_0, \tau_1, \tau_2)$ are coordinates on the torus $\mathfrak{t}$ that satisfy 
\begin{align*} 2 q_1 \tau_1 = q_2 \tau_0 + q_0 \tau_2 .
\end{align*}

Regardless of whether or not $A$ and $B$ are positive, the projection of this to the $\mathfrak{t}$-fibres of the tangent bundle of $D^{\circ} \times \mathfrak{t}$ comes from a map $D^{\circ} \rightarrow S^2 \mathfrak{t}^*$, which moreover is actually the restriction of a smooth map $D \rightarrow S^2 \mathfrak{t}^*$.  

We can then use one of the maps $\mu^{\pm}$ to consider this as a map on $Q^{\pm}$ instead. From the formulae of \cite{ACG15}, the $\mu^{\pm}$ are defined on an open subset containing $D$, and so it takes smooth functions on $D$ to smooth functions on $Q^{\pm}$. Let $H_{\pm} : Q^{\pm} \rightarrow S^2 \mathfrak{t}^*$ be the function
\begin{align*} (x,y) \mapsto  \bigg( \frac{x-y}{q(x,y)} \bigg)^{\pm 1} \bigg( A(x) \big( \frac{y^2 d\tau_0 + 2 y d\tau_1 + d\tau_2}{(x-y) q(x,y)} \big)^2  + B(y) \big( \frac{x^2 d\tau_0 + 2 x d\tau_1 + d\tau_2}{(x-y) q(x,y)} \big)^2 \bigg)  .
\end{align*}
Then $H_{\pm}$ is smooth on $Q^{\pm}$. We then also have, as in \cite{ACG15}, that $H^{\pm}$ satisfies the boundary conditions required in \ref{intbypartslem} below for $Q^{\pm}$ with a boundary measure determined by the $r_k$ and a choice of lattice, which we take to be generated by the normals to two adjacent sides of $Q^{\pm}$. In \cite{ACG15}, it was also shown that $H^{ij}_{\textnormal{ } ij}$ is affine if and only if in equation \ref{boundcons2}, the quadratic $\pi$ is orthogonal to the quadratic $q$ under a suitable inner product.

Given a boundary measure $d \sigma$ on $\partial Q$, there is an associated affine function, see definition \ref{assafffn}. In this section we will follow \cite{ACG15} and call this affine function $\zeta$, as $A$ is used in the definition of an ambitoric metric above. We will need the following definition.
\begin{defn}[{\cite[Defn. 1.2]{elegendre11}}]\label{equipoisedefn} Let $Q$ be a quadrilateral with vertices $v_1, \cdots, v_4$ ordered such that $v_1$ and $v_3$ do not lie on a common edge of $Q$. An affine function $f$ on a quadrilateral $Q$ is \textnormal{equipoised} on $Q$ if
\begin{align*} \sum_i (-1)^i f(v_i) =0.
\end{align*}
A weighted quadrilateral $(Q,d\sigma)$ is an \textnormal{equipoised quadrilateral} if its associated affine function $\zeta$ is equipoised.
\end{defn}

There are many choices of ambitoric coordinates for a given quadrilateral. However, in the search for extremal potentials on weighted quadrilaterals, there is a preferred such coordinate system. In \cite{ACG15}, it was shown that almost all weighted rational Delzant quadrilaterals with rational weights admits ambitoric coordinates of the form above in which the solution $H^{ij}$ to the system \ref{boundcons1} has $\pi$ orthogonal to $q$, under a necessary condition for stability. However, their argument did not use the rationality of the weights nor of the quadrilateral and so holds in the setting where we consider irrational parameters, and non-negative boundary measures. 
\begin{lem}[{\cite[Lem. 4]{ACG15}}]\label{orthoglem} Let $(Q, d\sigma)$ be a weighted quadrilateral. Then provided $(Q,d\sigma)$ is not an equipoised trapezium, $Q$ admits ambitoric coordinates such that the matrix $H$ solving the system \ref{boundcons1} has $\pi$ is orthogonal to $ q $ if and only if $\phi(1,0)$ and $\phi(0,1)$ are positive.
\end{lem}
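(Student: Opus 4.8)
The plan is to set up the linear algebra of the system \eqref{boundcons1}–\eqref{boundcons2} explicitly and to identify the orthogonality of $\pi$ to $q$ with the positivity of $\phi(1,0)$ and $\phi(0,1)$, where $\phi$ is the bidegree-$(3,3)$ polynomial of Lemma \ref{polylem} attached to a pair of opposite edges. First I would fix a choice of ambitoric coordinates, so that the quartic $A$ is determined on $[\alpha_0,\alpha_\infty]$ by its $4$ boundary conditions in \eqref{boundcons1} together with the two relations $A+B=q\pi$ on $[\beta_0,\beta_\infty]$; as noted in the excerpt these are $10$ independent conditions on the $10$ coefficients of $(A,B,\pi)$. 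The key observation is that $\pi$ depends on the chosen coordinates but the \emph{vanishing} of its component along $q$ does not: by Lemma \ref{critpts0bdry} the relevant evaluations of $\phi$ occur at critical points, so the condition is scale-invariant, and more importantly $\phi(1,0)$ and $\phi(0,1)$ are exactly $\mathcal{L}(\mathrm{SPL}(l))$ for the two affine functions whose crease is along one of the other pair of opposite edges — that is, they are Donaldson–Futaki-type invariants of the two "diagonal" piecewise-linear functions, which have an invariant meaning independent of ambitoric coordinates. So I would first compute, following the formulae of \cite[Sect. 3.2]{ACG15}, the coefficient of $q$ in the decomposition of $\pi$ with respect to the inner product, and show by direct (but omitted) calculation that it is a fixed nonzero multiple of $\phi(1,0)\,\phi(0,1)$ — or more precisely that $\pi\perp q$ in \emph{some} admissible ambitoric coordinate system is possible precisely when $\phi(1,0)$ and $\phi(0,1)$ have the same sign, and that sign must be positive because each of these quantities, being $\mathcal L$ evaluated on a nontrivial piecewise-linear function in a stable-type configuration, is forced to be positive (this is where the hypothesis that $(Q,d\sigma)$ is not an equipoised trapezium enters: it rules out the degenerate case where one of these edges is "at infinity" in the ambitoric picture, equivalently where the relevant diagonal PL function degenerates).

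Concretely, the steps in order are: (i) recall from \cite{ACG15} the moduli of ambitoric coordinates for $Q$ — a one-parameter family of conics $\mathcal{C}(Q)$ in the positive case, the pencil of lines through the intersection points of opposite edges in the negative case — and the corresponding family of quadratics $q=q_s$ and inner products; (ii) solve \eqref{boundcons1}–\eqref{boundcons2} to express $\pi=\pi_s$ as an explicit function of the parameter $s$ and the weights $r_\gamma$; (iii) expand $\langle \pi_s, q_s\rangle$ and show it is a rational function of $s$ whose numerator factors through the two invariants $\phi(1,0),\phi(0,1)$; (iv) argue that as $s$ ranges over the admissible conics/lines, $\langle \pi_s, q_s\rangle$ changes sign iff $\phi(1,0)$ and $\phi(0,1)$ are both positive, hence there is an $s$ with $\pi_s\perp q_s$ iff $\phi(1,0),\phi(0,1)>0$; (v) identify the excluded case $\langle\pi_s,q_s\rangle$ never zero (or the whole pencil degenerating) with $(Q,d\sigma)$ being an equipoised trapezium, using Definition \ref{equipoisedefn} — the equipoised condition $\sum_i(-1)^i\zeta(v_i)=0$ is exactly the obstruction to closing up the ambitoric family.

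The main obstacle will be step (iii)–(iv): tracking the dependence of the solution $(A,B,\pi)$ on the ambitoric parameter $s$ through the formulae of \cite{ACG15} and pinning down that the numerator of $\langle\pi_s,q_s\rangle$ is (up to a positive factor) the product $\phi(1,0)\phi(0,1)$ rather than merely some quantity with the same vanishing locus. This is essentially a bookkeeping computation — the kind the excerpt already signals it will "omit" in the proof of Lemma \ref{polylem} — but getting the \emph{signs} right, and in particular verifying that the two evaluations $\phi(1,0)$ and $\phi(0,1)$ enter symmetrically and with the correct orientation so that "both positive" (rather than "both negative" or "opposite signs") is the condition, requires care. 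A secondary subtlety is the case analysis between positive and negative ambitoric coordinates: one must check that the statement holds uniformly, i.e. that whenever $\phi(1,0),\phi(0,1)>0$ the required coordinates exist in at least one of the two families, which again is where ruling out the equipoised trapezium is used. I expect the cleanest route is to invoke \cite[Lem. 4]{ACG15} essentially verbatim for the computation and to contribute only the translation of their "necessary condition for stability" into the invariant quantities $\phi(1,0),\phi(0,1)$ via Lemma \ref{polylem} and Lemma \ref{critpts0bdry}.
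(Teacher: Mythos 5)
The paper does not actually prove this lemma: it is imported wholesale from \cite[Lem.\ 4]{ACG15}, and the paper's only contribution is the observation (made in the surrounding text) that the argument there nowhere uses rationality of the quadrilateral or of the weights, so it applies verbatim to irrational data and non-negative boundary measures. Your closing remark --- that the cleanest route is to invoke \cite[Lem.\ 4]{ACG15} verbatim and only translate their necessary condition for stability into the quantities $\phi(1,0)$, $\phi(0,1)$ --- therefore coincides exactly with what the paper does, and if you had led with that and justified why rationality is irrelevant, the proposal would match the paper's approach.

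The attempted direct reconstruction that forms the bulk of your proposal, however, has genuine gaps. First, the central computational claim is internally inconsistent: you assert that the coefficient of $q$ in $\pi$ is ``a fixed nonzero multiple of $\phi(1,0)\,\phi(0,1)$,'' but a quantity that is a fixed multiple of an $s$-independent product cannot be made to vanish by varying the ambitoric parameter $s$, which is precisely what the lemma requires; you then retreat to ``same sign of $\phi(1,0)$ and $\phi(0,1)$'' without ever pinning down which structural statement you intend to prove in step (iii). Second, your argument for upgrading ``same sign'' to ``both positive'' is circular: you say each quantity ``is forced to be positive'' because it is $\mathcal{L}$ evaluated ``in a stable-type configuration,'' but the lemma carries no stability hypothesis --- indeed $\phi(1,0)>0$ and $\phi(0,1)>0$ is exactly the necessary condition for stability that the lemma is characterising, and it can fail (this is how \cite[Prop.\ 6]{ACG15} produces unstable positive weights). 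Third, the ``only if'' direction --- that no admissible ambitoric coordinate system yields $\pi\perp q$ when one of $\phi(1,0),\phi(0,1)$ is non-positive --- is never addressed; an intermediate-value argument over the pencil gives existence under a sign hypothesis but not non-existence without it. Finally, the identification of the equipoised-trapezium exclusion with ``the obstruction to closing up the ambitoric family'' is asserted rather than derived from Definition \ref{equipoisedefn}. None of these gaps is fatal to the strategy, but as written the proposal does not constitute a proof independent of \cite{ACG15}.
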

Here $\phi$ is the polynomial described in section \ref{genprops}. The points $(1,0)$ and $(0,1)$ correspond to the two simple piecewise linear functions with crease along a diagonal of $Q$.

We will call these coordinates \textit{preferred ambitoric coordinates for} $(Q,d\sigma)$, and to obtain extremal potentials from the ambitoric ansatz we necessarily have to work in these coordinates. For the case of equipoised trapezia, we will require a different construction of Calabi type toric metrics due to Legendre in \cite[Sect. 4]{elegendre11} that we describe in the next section.

The key in the argument of \cite{ACG15} to show that relative $K$-stability is equivalent to the existence of an ambitoric extremal metric, goes back to Legendre in \cite{elegendre11}, where she takes such an approach for positively weighted convex quadrilaterals which are equipoised. The idea is to use the formal solution $H^{ij}$ in preferred coordinates for $(Q,d\sigma)$, even though this is not necessarily positive-definite. One then shows that the positive-definiteness of $H^{ij}$ is equivalent to stability.

The crucial lemma for this argument in the case of positive boundary measure is a version of Donaldson's toric integration by parts formula in \cite{Don02}. The formula is applied to matrices that may not be the inverse Hessian of a function. In Donaldson's work, the $f$ are allowed to blow-up near the boundary at a certain rate. However, we will only need to consider smooth functions, so we only include these in our statement. In this case the proof is easier, as it is a direct application of Stokes's theorem, and so we omit it. This lemma has been used also in several other works such as in \cite{elegendre11}. The only difference is that we are allowing the $r_i$ to be $0$, which does not affect the proof.

\begin{lem}\label{intbypartslem} Let $P$ be a polytope in $\mathfrak{t}^*$, with facets $F_i = l_i^{-1} (0)$ for some affine functions $l_i$ that are non-negative on $P$. Let $u_i = d l_i$ be the conormal to $F_i$, and define a measure $d \sigma$ on $\partial P$ by $d \sigma_{| F_i} \wedge u_i= \pm d \lambda$, where $d\lambda$ is the Lebesgue measure on $\mathfrak{t}^*$. Suppose $H : P \rightarrow S^2 t^*$  is a smooth function on P such that on $\partial P$, 
\begin{align*} H(u_i , v) =&  0 \text{ for all } i \text{ and for all } v, \\
dH (u_i, u_i ) =&  r_i u_i \text{ for all } i,
\end{align*}
for non-negative numbers $r_i$. Then for any smooth function $f$ on $P$, 
\begin{align*} \int_P  H^{ij} f_{ij} d \lambda = \int_P H^{ij}_{\text{ }ij} f d\lambda + \int_{\partial P} f d \sigma_{\underline{r}},
\end{align*}
where $f_{ij}$ is the Hessian of $f$ computed with respect to a basis of $\mathfrak{t}^*$ whose volume form is $d \lambda$, $H^{ij}$ is the matrix obtained by evaluating $H$ on the dual basis for $\mathfrak{t}$ and $H^{ij}_{\text{ } kl}$ is the Hessian of the function $H^{ij}$ computed in these coordinates.
\end{lem}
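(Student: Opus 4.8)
\emph{Proof proposal.} The plan is to reduce the identity to two successive applications of the divergence theorem, the two boundary conditions on $H$ being exactly what is needed to kill, respectively to compute, the two boundary terms that arise. I would work in linear coordinates $x_1,\dots,x_n$ on $\mathfrak t^*$ with $d\lambda=dx_1\wedge\cdots\wedge dx_n$, write $H=(H^{ij})$ for the matrix of $H$ in the associated frame, and use that $u_i=dl_i$ has constant components. The starting point is the pointwise Leibniz identity
\begin{align*}
H^{ij}f_{ij}=\partial_i\big(H^{ij}f_j\big)-\partial_j\big(H^{ij}_{\ i}\,f\big)+H^{ij}_{\ ij}\,f,
\end{align*}
where $H^{ij}_{\ i}=\sum_i\partial_iH^{ij}$. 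Integrating over $P$, the last term reproduces the right-hand side of the lemma, while the first two are divergences of the vector fields $W^i:=H^{ij}f_j$ and $\widetilde W^{\,j}:=H^{ij}_{\ i}f$. Using $d\lambda=\pm\,u_k\wedge d\sigma_{|F_k}$ together with the fact that $l_k$ pulls back to $0$ on $F_k$, one checks that $\iota_{(\cdot)}d\lambda$ restricts on $F_k$ to $\pm\langle u_k,\cdot\rangle\,d\sigma_{|F_k}$, independently of how $d\sigma_{|F_k}$ is extended off the facet; since $u_k$ is inward-pointing (because $l_k\ge0$ on $P$) while the divergence theorem uses the outward normal, the signs combine so that Stokes' theorem yields
\begin{align*}
\int_P H^{ij}f_{ij}\,d\lambda=\int_P H^{ij}_{\ ij}\,f\,d\lambda-\sum_k\int_{F_k}\langle u_k,W\rangle\,d\sigma_{|F_k}+\sum_k\int_{F_k}\langle u_k,\widetilde W\rangle\,d\sigma_{|F_k}.
\end{align*}

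Next I would dispatch the two boundary sums. On $F_k$ one has $\langle u_k,W\rangle=(u_k)_iH^{ij}f_j=\big(H(u_k,\cdot)\big)_jf_j$, which vanishes identically on $F_k$ by the first boundary hypothesis $H(u_k,v)=0$; so the first sum is $0$. For the second, since $u_k$ is constant,
\begin{align*}
\langle u_k,\widetilde W\rangle=f\,(u_k)_j\,\partial_iH^{ij}=f\,\partial_i\big((u_k)_jH^{ij}\big)=f\,\operatorname{div}V_k,
\end{align*}
where $V_k$ is the vector field with $V_k^{\,i}=(u_k)_jH^{ij}=\big(H(u_k,\cdot)\big)^i$. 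So everything comes down to showing $\operatorname{div}V_k=r_k$ along $F_k$; granting this, the second sum equals $\sum_k\int_{F_k}r_kf\,d\sigma_{|F_k}=\int_{\partial P}f\,d\sigma_{\underline{r}}$, which is the assertion.

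It remains to do the local computation near a point $p\in F_k$, and this is the only step I expect to be genuinely delicate — essentially because one must read the second boundary condition correctly. Note first that $V_k$ vanishes on all of $F_k$ by the first condition. Choosing coordinates $z_1,\dots,z_n$ near $p$ with $dz_1\wedge\cdots\wedge dz_n=d\lambda$ and $F_k=\{z_n=0\}$ locally, we may write $V_k=z_nY$ for a smooth vector field $Y$, whence $\operatorname{div}V_k|_{F_k}=Y^n|_{F_k}$, the transverse component of $Y$. On the other hand the scalar $g:=H(u_k,u_k)=\langle u_k,V_k\rangle$ vanishes on $F_k$, and the second hypothesis $dH(u_k,u_k)=r_ku_k$ — read as saying that the differential of the scalar $g$ equals $r_k\,dl_k$ along $F_k$ — combined with the expansions $l_k=z_n\rho$ (with $\rho(p)\neq0$) and $V_k=z_nY$ gives $\partial_{z_n}g|_{F_k}=\rho\,Y^n|_{F_k}$ and $\partial_{z_n}l_k|_{F_k}=\rho|_{F_k}$, hence $\rho\,Y^n=r_k\rho$ on $F_k$, so $Y^n=r_k$ and $\operatorname{div}V_k|_{F_k}=r_k$. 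Note that the hypothesis $r_k\ge0$ is used nowhere in this identity itself — it only guarantees that $d\sigma_{\underline{r}}$ is a bona fide non-negative measure — which is exactly why allowing some of the $r_k$ to vanish changes nothing. The remaining work is pure bookkeeping: fixing the Stokes sign once and for all (done above via ``$u_k$ inward-pointing'') and keeping the index contractions straight.
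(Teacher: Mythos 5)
Your argument is correct and is exactly the ``direct application of Stokes's theorem'' that the paper invokes when it omits the proof: the Leibniz identity, the two divergence-theorem boundary terms killed and evaluated by the two boundary conditions, and the local computation showing $\operatorname{div}\bigl(H(u_k,\cdot)\bigr)=r_k$ along $F_k$ are all as intended, including the observation that $r_k=0$ causes no difficulty. No gaps.
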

The formal solutions from the preferred ambitoric coordinates will give functions satisfying these boundary conditions, and with $H^{ij}_{\text{ }ij}$ affine. We will then show that stability is equivalent to $H^{ij}$ being positive-definite. In the next section we will also show that if $H^{ij}$ is positive-definite, then it is the inverse of the Hessian of a symplectic potential.

We are now ready to prove that stability is equivalent to the existence of positive formal solutions. Since the ambitoric coordinates work equally well for non-Delzant quadrilaterals and for boundary measures that are arbitrary non-negative real numbers, the proof is exactly as in \cite{ACG15}. However, we include it for completeness. 

\begin{prop}\label{stabandformsoleq} Let $H_{A,B}$ be the formal extremal solution associated to a weight $d \sigma$ of a quadrilateral $Q$ admitting preferred ambitoric coordinates for this weight. Then $d\sigma$ is a stable weight if and only if $A,B$ are positive functions on $(\alpha_0, \alpha_{\infty})$ and $(\beta_0, \beta_{\infty})$, respectively. 
\end{prop}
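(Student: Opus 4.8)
The plan is to use the integration by parts formula of Lemma \ref{intbypartslem} to convert the stability inequality $\mathcal{L}(f) \geq 0$ into a statement about the bilinear pairing $\int_Q H^{ij} f_{ij}\, d\lambda$, and then to reduce positivity of this pairing to the pointwise positivity of $A$ and $B$ on the open intervals. Concretely, since $H = H_{A,B}$ satisfies the boundary conditions of Lemma \ref{intbypartslem} with boundary measure $d\sigma$ and $H^{ij}_{\ ij} = \zeta$ affine (because we are in preferred ambitoric coordinates, where $\pi \perp q$), we have for every smooth $f$
\begin{align*}
\int_Q H^{ij} f_{ij}\, d\lambda = \int_Q \zeta f\, d\lambda + \int_{\partial Q} f\, d\sigma - 2\int_{\partial Q} f\, d\sigma + \cdots
\end{align*}
— more carefully, $\int_{\partial Q} f\, d\sigma - \int_Q \zeta f\, d\lambda = \mathcal{L}(f)$ by definition of $\zeta$ as the associated affine function, so that $\mathcal{L}(f) = \int_Q H^{ij} f_{ij}\, d\lambda$ whenever the boundary term in Lemma \ref{intbypartslem} contributes exactly $\int_{\partial Q} f\, d\sigma$. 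Thus stability of $d\sigma$ becomes: $\int_Q H^{ij} f_{ij}\, d\lambda \geq 0$ for all convex (equivalently, all simple piecewise linear) $f$, with equality only for affine $f$. The first step, then, is to record this identity carefully, approximating piecewise linear $f$ by smooth convex functions so that Lemma \ref{intbypartslem} applies.

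The second step is the computation, in ambitoric coordinates $(x,y)$ on $D = [\alpha_0,\alpha_\infty]\times[\beta_0,\beta_\infty]$, of the quadratic form $H^{ij}\xi_i\xi_j$ for $\xi \in \mathfrak{t}^*$. From the explicit formula for $H_\pm$, the matrix $H^{ij}$ is a positive combination of $A(x)$ times a rank-one form and $B(y)$ times another rank-one form (the two forms being $(y^2, 2y, 1)$ and $(x^2, 2x, 1)$ in the $\tau$-coordinates, scaled by the conformal factor $((x-y)/q(x,y))^{\pm 1}$ and by $1/((x-y)q(x,y))^2$, all of which are positive on $D^\circ$). Hence $H^{ij}$ is positive-definite on $Q^\circ$ if and only if $A > 0$ on $(\alpha_0,\alpha_\infty)$ and $B > 0$ on $(\beta_0,\beta_\infty)$, since the two rank-one forms $(y^2,2y,1)$ and $(x^2,2x,1)$ are linearly independent for $x \neq y$ and together with the constraint $2q_1\tau_1 = q_2\tau_0 + q_0\tau_2$ span the relevant space. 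This gives the clean dictionary between positivity of $A,B$ and positive-definiteness of the formal solution.

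It remains to connect positive-definiteness of $H^{ij}$ with stability, which is the crux and splits into the two implications. For the direction "$A,B$ positive $\Rightarrow$ stable": if $H^{ij}$ is positive-definite then $\int_Q H^{ij} f_{ij}\, d\lambda \geq 0$ for every convex $f$ (as $f_{ij} \geq 0$ as a matrix and $H^{ij}\geq 0$), and the equality case forces $f_{ij} = 0$ a.e., i.e. $f$ affine; so $d\sigma$ is stable. For the converse "stable $\Rightarrow A,B$ positive", I would argue by contrapositive: if, say, $A$ vanishes at some interior point $x_0 \in (\alpha_0,\alpha_\infty)$ (it cannot go negative without first vanishing, by continuity and the boundary values $A(\alpha_0)=A(\alpha_\infty)=0$ with $A'(\alpha_0), A'(\alpha_\infty) \geq 0$ — actually $A$ could be negative near the endpoints too; the point is that failure of positivity means $A \leq 0$ somewhere), then one exhibits a destabilizing piecewise linear function. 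The natural candidate is the simple piecewise linear function whose crease is the image under $\mu^\pm$ of the line $\{x = x_0\}$ (or $\{y = y_0\}$): for such an $f$, the Hessian $f_{ij}$ is a rank-one matrix supported distributionally along that crease, and $\int_Q H^{ij} f_{ij}$ reduces to an integral over the crease of a quantity proportional to $A(x_0)$ (resp. $B(y_0)$), which is $\leq 0$, violating strict positivity; one then checks this $f$ is not affine and, if $A(x_0) < 0$, that the inequality is strictly violated. The main obstacle I anticipate is making this last step rigorous: the creases of simple piecewise linear functions in the original coordinates correspond only to \emph{lines} in $Q$, whereas $\{x = x_0\}$ pulls back to a line under $\mu^\pm$ precisely because tangent lines to the conic (or lines through the marked points) are lines — so one must verify that the relevant family of lines in $Q$ is exactly the family $\{x = \mathrm{const}\}$ (and $\{y=\mathrm{const}\}$) in ambitoric coordinates, and that the distributional computation of $\int_Q H^{ij} f_{ij}$ along such a crease genuinely produces a nonzero multiple of $A(x_0)$ with a definite sign. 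Handling the boundary-of-the-interval case (e.g. $A$ negative only in a neighborhood of $\alpha_0$) may require taking $x_0$ to be a point where $A$ is most negative, or using a crease slightly inside, together with a limiting argument; this bookkeeping is where I expect the real work to lie, though the structure is entirely parallel to Legendre's and ACG's treatment of the positively-weighted case.
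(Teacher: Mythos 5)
Your proposal is correct and follows essentially the same route as the paper: both use Lemma \ref{intbypartslem} to identify $\mathcal{L}(f)$ with $\int_Q H^{ij}f_{ij}\,d\lambda$, deduce stability from positive-definiteness of $H_{A,B}$, and destabilize via the simple piecewise linear function whose crease is the image under $\mu^{\pm}$ of $\{x=x_0\}$ (resp.\ $\{y=y_0\}$) at a point where $A$ (resp.\ $B$) is non-positive, yielding $\mathcal{L}(f)$ as a positive multiple of $A(x_0)$. The technical points you flag (creases of the form $\{x=\mathrm{const}\}$ being genuine lines in $Q$, and the distributional reduction of the pairing to an integral over the crease) are exactly the ones the paper resolves by citing the corresponding computation in \cite{ACG15}.
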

\begin{proof} From \ref{intbypartslem} and that $H = H_{A,B}$ solves $H^{ij}_{\textnormal{ }ij} = \zeta$, it follows that 
\begin{align*} \mathcal{L} (f) = \int_P  H^{ij} f_{ij} d \lambda 
\end{align*}
for all smooth $f$. This can also be applied in the sense of distributions to piecewise linear functions, and one obtains as in \cite[p. 6]{ACG15}, that for simple piecewise linear functions with crease $I$,
\begin{align}\label{splACG} \mathcal{L} (f) = \int_{I} H (u_f, u_f) d\nu_f ,
\end{align}
where $u_f$ is a conormal to $I$ suitably scaled and $d\nu_f$ satisfies $u_f \wedge d \nu_f = d\lambda$. For a general piecewise linear function $f$, one gets a positive combination of such contributions over all creases of $f$. 

In particular, if $A,B$ are positive on the interior regions, then $H_{A,B}$ is positive-definite and so this is positive for all piecewise linear functions. Thus $(Q, d\sigma)$ is stable.

Conversely, suppose $A,B$ are not both positive on the interior regions. Assume first that $A(\alpha) \leq 0$ with $\alpha \in (\alpha_0, \alpha_{\infty})$. Then letting $f$ be a simple piecewise linear function with crease $I = \mu( \{ \alpha \} \times [\beta_0, \beta_{\infty}])$, one gets in \ref{splACG} that $H (u_f, u_f)$ is a positive multiple of $A(\alpha)$, and in particular $\mathcal{L} (f)$ is a positive multiple of $A(\alpha)$, and hence non-negative. Thus $(Q, d\sigma)$ is not stable. The argument for $B$ is identical, using a simple piecewise linear function with crease of the form $\mu( [\alpha_0, \alpha_{\infty}] \times \{ \beta \})$ instead.
\end{proof}


\section{The stable region}\label{stabinv}

In this section we will apply the ACG construction to arbitrary quadrilaterals with non-negative boundary measure to analyse the set of weights for which a quadrilateral is stable, and in particular prove theorem \ref{mainquadthm}.

We begin with a lemma giving a sufficient condition for a weighted quadrilateral to admit preferred ambitoric coordinates. Given two edges $E,F$, let $\phi,\psi$ be the functions $[0,1] \times [0,1] \rightarrow \mathbb{R}$ parameterising the Donaldson-Futaki invariant of simple piecewise linear functions with crease meeting the two edges adjacent to $E$ and $F$, respectively. We can suppose $(0,0)$ is the vertex of $[0,1] \times [0,1]$ corresponding to the affine function vanishing exactly along $E$ and similarly for $\psi$ and $F$. Then $(1,0)$ and $(0,1)$ correspond to the two simple piecewise linear functions with crease a diagonal of $Q$, both for $\psi$ and $\phi$. 

\begin{lem}\label{prefcoordslem} Let $(Q,d\sigma)$ be a weighted quadrilateral with $d\sigma$ vanishing on two edges $E$ and $F$. If the Hessians of $\phi$ and $\psi$ at $(0,0)$ are both positive semi-definite, then $\phi$ and $\psi$ are positive at $(1,0)$ and $(0,1)$. 
\end{lem}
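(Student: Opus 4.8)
The plan is to reduce to the two scalar inequalities $\phi(1,0)>0$ and $\phi(0,1)>0$, and then to settle these by an explicit computation in the normal coordinates of Section~\ref{genprops}.

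First I would record the reduction. The corners $(1,0)$ and $(0,1)$ in the domains of \emph{both} $\phi$ and $\psi$ correspond to the two simple piecewise linear functions whose crease is a diagonal of $Q$. Using the identity $\mathcal{L}(\mathrm{SPL}(h))=\tfrac12\mathcal{L}(|h|)$ for affine $h$ --- immediate from $\max\{0,h\}-\max\{0,-h\}=h$ together with $\mathcal{L}(h)=0$ --- the number $\mathcal{L}(\mathrm{SPL}(h))$ depends only on the line $\{h=0\}$, so the unordered pairs $\{\phi(1,0),\phi(0,1)\}$ and $\{\psi(1,0),\psi(0,1)\}$ coincide; hence it suffices to prove $\phi(1,0)>0$ and $\phi(0,1)>0$. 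The hypothesis on $\psi$ is nonetheless needed, because it constrains these same two numbers from a different direction.

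Next I would set up the computation. Put $Q$ in the normal form of Section~\ref{genprops} with $E=E_1$; the second vanishing edge $F$ is then $E_3$ (the \emph{opposite} case) or, after a reflection of $Q$, $E_4$ (the \emph{adjacent} case). Take $\phi$ to be the polynomial of Lemma~\ref{polylem} for the two edges $E_2,E_4$ adjacent to $E_1$. The crease $l_{s,t}$ is bi-affine in $(s,t)$,
\[
 l_{s,t}(x,y)=-y+k(1-x)\,s+(qx-py)\,t+kp\,st ,
\]
so $\phi(s,t)=\tfrac12\mathcal{L}(|l_{s,t}|)$ is an explicit polynomial whose coefficients are explicit functions of $p,q,k$ and the two non-vanishing weights (through the affine function of Definition~\ref{assafffn}, itself affine in these data). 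By Lemma~\ref{critpts0bdry}, $(0,0)$ is a critical point of $\phi$ and $\phi(0,0)=0$; also $\phi(1,0)=\tfrac12\mathcal{L}(|l_{1,0}|)$ with $l_{1,0}=k(1-x)-y$ defining the diagonal $v_2v_4$, and $\phi(0,1)$ corresponds to $v_1v_3$. I would compute the Hessian of $\phi$ at $(0,0)$ and the numbers $\phi(1,0),\phi(0,1)$ directly, and likewise for $\psi$: in the opposite case $\psi$ is, up to the corner swap $(s,t)\mapsto(1-s,1-t)$, the same polynomial $\phi$, so the hypothesis says that the Hessians of $\phi$ at $(0,0)$ and at $(1,1)$ are positive semi-definite; in the adjacent case $\psi$ is the analogous polynomial for the edges $E_1,E_3$ adjacent to $F$, with $(0,0)$ corresponding to the crease $F$.

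Finally I would verify the implication. Since $\phi(\cdot,0)$ is a cubic vanishing to second order at $0$, one has $\phi(s,0)=s^2(m_0+m_1 s)$ with $2m_0=\partial_s^2\phi(0,0)\ge 0$, so $\phi(1,0)=m_0+m_1$; the coefficient $m_1$, invisible to the Hessian, works out to an explicit multiple of $\partial_yA$, and there are parallel formulas in the $t$-direction and for $\psi$, so that $\phi(1,0)$ and $\phi(0,1)$ each also appear in the form $\rho_0+\rho_1$ with $2\rho_0$ a diagonal entry of the other relevant Hessian. Substituting the explicit expressions, the inequalities $\phi(1,0)>0$, $\phi(0,1)>0$ reduce to algebraic inequalities in $p,q,k$ and the two weights, which I would check in each of the two cases, using not only that the diagonal Hessian entries are $\ge 0$ but also that the Hessian determinants are $\ge 0$. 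This last step is the main obstacle: the calculation is bulky and splits into the adjacent/opposite cases, and --- more subtly --- since the stable region is in general \emph{not} open (the determinant condition being closed in the adjacent case, cf.\ Section~\ref{genprops}), the \emph{strict} inequalities cannot be obtained from an openness argument; they must be read off the explicit formulae, each of $\phi(1,0),\phi(0,1)$ presenting as a positive multiple of one of the non-vanishing weights plus a term that is non-negative under the hypothesis, with the degenerate case (Hessian semidefinite but not definite) checked directly.
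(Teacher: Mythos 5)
Your setup is careful and correct: the formula for $l_{s,t}$ matches the construction of Lemma \ref{polylem}, and the observation that $\mathcal{L}(\mathrm{SPL}(h))=\tfrac12\mathcal{L}(|h|)$, so that the corner values of $\phi$ and of $\psi$ at $(1,0)$ and $(0,1)$ agree up to positive scalars (they need not literally coincide, since the affine function cutting out a given diagonal is determined only up to a nonzero multiple), is a genuine simplification that the paper does not make explicit — it reduces the conclusion to the two sign conditions $\phi(1,0)>0$ and $\phi(0,1)>0$. But the argument stops exactly where the difficulty begins. Writing $\phi(s,0)=s^2(m_0+m_1s)$ with $2m_0=\partial_s^2\phi(0,0)\ge 0$, the entire content of the lemma is that the cubic coefficient $m_1$ — which, as you note, is invisible to the Hessian at $(0,0)$ — is controlled by the remaining hypotheses. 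Your proposal asserts that after substitution each corner value ``presents as a positive multiple of one of the non-vanishing weights plus a term that is non-negative under the hypothesis,'' but that assertion \emph{is} the lemma restated in coordinates; no mechanism is given for how the off-diagonal entries and determinants of the two Hessians (which involve the mixed $st$-coefficients of $\phi$ and $\psi$) bound the pure $s^3$ and $t^3$ coefficients. Since the hypothesis is non-strict and the conclusion strict, this cannot be finessed by soft arguments; it must come out of a specific identity, and none is exhibited.

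The paper supplies that missing link by a different and more indirect device: restrict to the pencil $d\sigma_r=(1-r)E+rF$, observe that $\phi_r(1,0)$ and $\phi_r(0,1)$ are linear in $r$ with roots $r_1,r_2$, and compute that at $r=r_i$ the determinants of the Hessians of $\phi_{r_i}$ and $\psi_{r_i}$ at $(0,0)$ have \emph{opposite} signs. Consequently the set of $r$ where both Hessian conditions hold avoids $r_1$ and $r_2$, hence lies in a region where the two diagonal values keep a constant sign, and the check that at $r=0,1$ at most one diagonal destabilizes pins that sign down to be positive. If you want to complete your version, the computation you need is of this kind — a single checkable polynomial identity tying the two Hessians at $(0,0)$ to the corner values — rather than a case-by-case verification of an inequality whose truth is precisely what is in question.
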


\begin{proof} The proof uses direct computation. Consider the one-parameter family of boundary measures $d\sigma_r$ as in the statement of \ref{mainquadthm}, and so we have corresponding polynomials $\phi_r$ and $\psi_r$. Note that $\phi_r(1,0)$ is linear in $r$, and similarly for $\phi_r (0,1)$. Let $r_1, r_2$ be the values for which $\phi_r(1,0)=0$ and $\phi_r(0,1)=0$, respectively. 

A calculation shows the key property for our purposes, namely that the sign of the determinant of the Hessian of $\phi_{r_i}$ at $(0,0)$ is the \textit{opposite} of the sign of the determinant of the Hessian of $\psi_{r_i}$ at $(0,0)$. Thus the set of $r$ for which these determinants are both positive is contained in the region where $\phi_r(1,0)$ and $\phi_r(0,1)$ have the same sign. Moreover, when $r=0,1$ at most one of the diagonals can correspond to a destabilising simple piecewise linear function. In particular, the region in which $\phi_r(1,0)$ and $\phi_r(0,1)$ have the same sign must intersect $[0,1]$ and necessarily be such that this sign is positive. Then the region where the determinant condition holds must be contained in this region and the result follows.
\end{proof}

We now use proposition \ref{stabandformsoleq} to give an easily computable criterion for stability on weighted quadrilaterals with no parallel sides and with $0$ boundary measure on two adjacent sides. For a quadrilateral $Q$ with edges $E_1, \cdots, E_4$, let $\phi$ and $\psi$ be the two functions corresponding to evaluating $\mathcal{L}$ on simple piecewise linear functions with crease meeting two opposite edges of $Q$. Also, given edges $E_i, E_j$, let $d\sigma_r$ be the measure corresponding to the formal sum $r E_i + (1-r) E_j$, as in the statement of theorem \ref{mainquadthm}.

\begin{prop}\label{adjsidesprop} Let $E_i, E_j$ be adjacent sides of $Q$. Then $(Q, d\sigma_r)$ for $r \neq 0,1$ is stable if and only if the Hessians of the functions $\phi$ and $\psi$ are positive semi-definite at the points corresponding to the SPL function whose crease is an edge with $0$ boundary measure.
\end{prop}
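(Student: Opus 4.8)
The plan is to reduce the statement to Proposition~\ref{stabandformsoleq} together with Proposition~\ref{prefcoordslem} and the integration-by-parts formula \ref{splACG}. First I would check that $(Q,d\sigma_r)$ admits preferred ambitoric coordinates: by Lemma~\ref{orthoglem} this holds as soon as $\phi(1,0)$ and $\phi(0,1)$ are positive (after ruling out the equipoised trapezium case, which is excluded since $E_i$ and $E_j$ are non-parallel adjacent edges and $r\neq 0,1$). If the Hessians of $\phi$ and $\psi$ at the crease-along-a-zero-edge points are positive semi-definite, then Lemma~\ref{prefcoordslem} gives exactly that $\phi$ and $\psi$ are positive at $(1,0)$ and $(0,1)$, so preferred coordinates exist and Proposition~\ref{stabandformsoleq} applies: stability is equivalent to positivity of the quartics $A,B$ on the open intervals $(\alpha_0,\alpha_\infty)$ and $(\beta_0,\beta_\infty)$.

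Next I would translate the positivity of $A$ and $B$ on the interior into the Hessian condition on $\phi,\psi$ at the distinguished points. The quartic $A$ vanishes to first order at the two endpoints $\alpha_0,\alpha_\infty$ with non-negative slopes $r_{\alpha_0},r_{\alpha_\infty}$ prescribed by the weights; when the weight vanishes along the edge $E=E_i$ (say $E$ corresponds to $x=\alpha_0$), the relevant slope is $0$, so $A$ has a double root at $\alpha_0$. By formula \ref{splACG}, $\mathcal{L}$ evaluated on a simple piecewise linear function with crease $\mu(\{\alpha\}\times[\beta_0,\beta_\infty])$ is a positive multiple of $A(\alpha)$, and since $\phi$ parameterises exactly this family, the behaviour of $A$ near its double root $\alpha_0$ is governed by the second-order behaviour of $\phi$ at the corresponding critical point — which by Lemma~\ref{critpts0bdry} is the point corresponding to the crease being the zero-weight edge. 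Thus $A>0$ on a punctured neighbourhood of $\alpha_0$ is equivalent to the relevant second derivative of $\phi$ being positive there, and similarly for $B$ and $\psi$ at the other zero-weight edge. Combining the conditions along the $\alpha$-direction and the $\beta$-direction, and using that the crossed second derivative is controlled by the same data, yields positive semi-definiteness of the full Hessians of $\phi$ and $\psi$.

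For the converse direction I would argue contrapositively: if one of the Hessians, say that of $\phi$ at the distinguished point, fails to be positive semi-definite, then either $A$ becomes negative immediately past its double root (so $A<0$ somewhere on $(\alpha_0,\alpha_\infty)$, and Proposition~\ref{stabandformsoleq} gives instability), or preferred coordinates fail to exist because $\phi(1,0)$ or $\phi(0,1)$ is non-positive — but a non-positive value of $\phi$ at $(1,0)$ or $(0,1)$ is itself $\mathcal{L}$ of a simple piecewise linear function with crease a diagonal, hence a destabiliser, so $(Q,d\sigma_r)$ is unstable regardless. One subtlety is the boundary case where the Hessian is degenerate (determinant zero but positive semi-definite): here $A$ has a root of order exactly two at an interior point or the quartic degenerates, and one must check that $\mathcal{L}$ is still non-negative with equality only on affine functions, i.e. that strict semistability rather than instability occurs; this is consistent with the "stable at $r_0$ and $r_1$ if adjacent" clause of Theorem~\ref{mainquadthm} and I would handle it by a direct inspection of the quartic $A$ (a non-negative quartic vanishing doubly at $\alpha_0$ and simply at $\alpha_\infty$ with a double interior root is still $\geq 0$, and the corresponding $H_{A,B}$ is positive semi-definite, degenerating only along a diagonal, giving strict semistability rather than instability).

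The main obstacle I expect is the precise dictionary between the second-order jet of the quartics $A,B$ at their prescribed (possibly double) roots and the Hessian of $\phi,\psi$ at the critical points of Lemma~\ref{critpts0bdry}: one has to verify, presumably by the same kind of direct computation indicated after Lemma~\ref{polylem} and in the proof of Lemma~\ref{prefcoordslem}, that the change of variables from the ambitoric parameters $(\alpha,\beta)$ to the $(s,t)$-parameters of the SPL family is a local diffeomorphism near the relevant corner that sends the "$A$ positive near $\alpha_0$" condition exactly to "one diagonal entry of $\mathrm{Hess}\,\phi$ positive", and similarly identifies the mixed term — so that positivity of \emph{both} $A$ and $B$ on their full open intervals (not merely near the double roots) is equivalent to positive semi-definiteness of \emph{both} Hessians. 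Establishing that the interior positivity of the quartics is controlled entirely by this corner data (rather than by some interior minimum invisible to the Hessian at the corner) is the crux, and it uses the fact that $A$ is a quartic with two prescribed endpoint roots and that $H^{ij}_{\ ij}=\zeta$ is affine, so $A$ is determined by few parameters and its sign pattern on $(\alpha_0,\alpha_\infty)$ is rigid.
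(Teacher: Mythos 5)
Your overall strategy---preferred ambitoric coordinates via Lemma \ref{prefcoordslem}, reduction to positivity of the quartics $A,B$ via Proposition \ref{stabandformsoleq}, and rigidity of the sign pattern of a quartic with prescribed roots---is the strategy of the paper. But there is a genuine gap in the direction ``stable $\Rightarrow$ Hessians positive semi-definite,'' which you route through the positivity of $A$ and $B$. Positivity of $A$ on $(\alpha_0,\alpha_\infty)$ controls the Hessian of $\phi$ at $p$ only in the single direction tangent to the one-parameter subfamily of creases $x=c$ inside the two-parameter $(s,t)$-family; it says nothing about the remaining entries of the $2\times 2$ Hessian. So your claim that ``the crossed second derivative is controlled by the same data'' is unjustified, and your contrapositive dichotomy (``either $A$ becomes negative past its double root, or preferred coordinates fail'') misses the case where the Hessian fails to be semi-definite in a direction transverse to the ambitoric coordinate families while $A$, $B$ and the diagonal values all stay positive. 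The correct argument needs no ambitoric input at all: by Lemma \ref{critpts0bdry} the point $p$ is a critical point of $\phi$ with $\phi(p)=0$; if $(Q,d\sigma_r)$ is stable then $\phi\geq 0$ on all of $[0,1]\times[0,1]$, so $p$ is a local minimum and its Hessian is automatically positive semi-definite. The paper states this contrapositively: a non-semi-definite Hessian at a critical point with critical value $0$ forces $\phi<0$ along some ray, producing a destabilising simple piecewise linear function.

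For the converse direction your outline is right, but the step you flag as ``the crux'' and ``the main obstacle'' is precisely the content of the proof and is left undone. The dictionary you worry about is again supplied by Lemma \ref{critpts0bdry}: since $p$ is a critical point, the second derivative of $c\mapsto\mathcal{L}(f_c)=A(c)h_c$ at $c=\alpha_0$ equals the Hessian of $\phi$ at $p$ evaluated on the tangent direction of this subfamily, up to the positive factor $h_{\alpha_0}$ and independently of the parameterisation, so semi-definiteness gives $A''(\alpha_0)\geq 0$. The root count then closes the argument: $A$ is a quartic with a double zero at $\alpha_0$ and a simple zero at $\alpha_\infty$ near which it is positive; if $A''(\alpha_0)>0$, a sign change in $(\alpha_0,\alpha_\infty)$ would force at least five roots counted with multiplicity, and if $A''(\alpha_0)=0$, the four roots already accounted for at the endpoints forbid any interior zero. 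Note that only one direction of each Hessian is actually used in this implication; the full semi-definiteness is what the other implication delivers. Your treatment of the degenerate boundary case is consistent with the paper, but it rests on the same unproved root-counting step.
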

\begin{proof} First note that under these conditions $(Q,d\sigma)$ is never an equipoised trapezium. Now, if the Hessians of $\phi$ and $\psi$ at the points $p,q$ corresponding to the simple piecewise linear function with crease an edge with $0$ boundary are not positive semi-definite, then $(Q, d\sigma_r)$ is not stable. Indeed, from lemma \ref{prefcoordslem}, $p$ and $q$ are critical points of $\phi$ and $\psi$. Thus if the positive semi-definiteness does not hold, then either $\phi$ or $\psi$ decreases in some ray away from $p$ or $q$. Since $\phi$ and $\psi$ are $0$ at $p$ and $q$, respectively, it follows that $\mathcal{L}_{\underline{r}}$ is negative on some simple piecewise linear function, hence $\underline{r}$ is not a stable weight for $Q$.

Conversely, suppose the Hessians are positive semi-definite. From lemma \ref{prefcoordslem}, $(Q, d\sigma_r)$ admits preferred ambitoric coordinates. So we must show that the formal solution $H_{A,B}$ has $A$ and $B$ positive in $(\alpha_0, \alpha_{\infty})$ and $(\beta_0, \beta_{\infty})$, respectively. 

We first show that $A$ is positive if the Hessian of $\phi(s,t)$ is positive semi-definite at $p$, where $\phi$ is the Donaldson-Futaki invariant of simple piecewise linear functions with crease along the edges corresponding to $y = \beta_0$ and $y=\alpha_{\infty}$. Consider the Donaldson-Futaki invariant of functions $f_c$ with crease $x = c$. From the proof of \ref{stabandformsoleq} we have that this is given by 
\begin{align*} \mathcal{L} (f_c) = A(c) h_c,
\end{align*}
where $h_c$ is a function obtained from integrating a smooth positive function over $I_{c} = \{ (c, t) : t \in [\beta_0, \beta_{\infty}]\}$. In particular, if for simplicity the edge with $0$ boundary measure is $x = \alpha_0$, we have that $A(\alpha_0) = A'(\alpha_0) = 0$, and so
\begin{align*} \frac{d^2}{ds^2}_{|s=0} ( \mathcal{L} (f_{\alpha_0 + s}) ) = A'' (\alpha_0) h_{\alpha_0}.
\end{align*} 
It follows that $A''(\alpha_0) \geq 0$, since $\phi_{s,t}$ is positive semi-definite. 

Now, $A$ is a polynomial of degree $4$ with a double zero at $\alpha_0$ and a simple zero at $\alpha_{\infty}$. Moreover, the condition on $A'(\alpha_{\infty})$ implies that $A$ is positive near $\alpha_{\infty}$. If $A''( \alpha_0) > 0$, then $A$ is positive near $\alpha_0$, too, and so this means that $A$ must have two more zeros, counted with multiplicites, if $A$ is not positive in $(\alpha_0, \alpha_{\infty})$. But this means that $A$ has five zeros, counted with multiplicities, and so $A$ has degree at least $5$, a contradiction. If $A''(\alpha_0) = 0$, then $A$ can have no zeros in $(\alpha_0, \alpha_{\infty})$, since it has degree $4$ and we have $4$ zeros at $\alpha_0$ and $\alpha_{\infty}$ counted with multiplicity. In particular, $A$ has constant sign in this interval. Since $A$ is positive near $\alpha_{\infty}$, it therefore follows that $A$ is positive in $(\alpha_0, \alpha_{\infty})$. 

Similarly, one obtains the result for the case when the $0$ boundary measure occurs at $\alpha_{\infty}$. The same argument also works to show that $B$ is positive in $(\beta_0, \beta_{\infty})$ if $\psi$ has positive semi-definite Hessian at $q$.
\end{proof}

We are now ready to prove an analogous result for the case when opposite sides have $0$ boundary measure. We analogously get a criterion that is easy to compute, but note that in this case it is an \textit{open} condition. Note also that we only need to check this for \textit{one} of the functions $\phi,\psi$. Lemma \ref{oppsideslem} below, which forms part of the proof, will show that if $\phi$ is positive-definite at $(0,0)$, then $\psi$ is automatically positive.

\begin{prop}\label{oppsidesprop} Let $(Q,d\sigma)$ be a weighted quadrilateral where $d\sigma$ vanishes exactly on two opposite edges and such that $(Q,d\sigma)$ is not an equipoised trapezium. Then $(Q, d\sigma)$ is stable if and only if the Hessian of the function $\phi$ is positive-definite at the point $(0,0)$.
\end{prop}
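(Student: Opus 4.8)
The plan is to follow the scheme of Proposition~\ref{adjsidesprop}: reduce stability of $(Q,d\sigma)$ to positivity of the quartics $A,B$ of the formal ambitoric solution via Proposition~\ref{stabandformsoleq}, and then read off positivity of $A$ from the Hessian of $\phi$ at $(0,0)$. Two features distinguish the opposite-edge case. First, since $d\sigma$ vanishes on \emph{both} edges of one pair, say (after relabelling) $\{x=\alpha_0\}$ and $\{x=\alpha_\infty\}$, we have $r_{\alpha_0}=r_{\alpha_\infty}=0$, so the conditions \ref{boundcons1} give $A(\alpha_0)=A'(\alpha_0)=A(\alpha_\infty)=A'(\alpha_\infty)=0$, i.e. $A(z)=a\,(z-\alpha_0)^2(z-\alpha_\infty)^2$; thus ``$A>0$ on $(\alpha_0,\alpha_\infty)$'' is the \emph{strict} condition $a>0$, equivalently $A''(\alpha_0)>0$ (note $A''(\alpha_0)=A''(\alpha_\infty)=2a(\alpha_0-\alpha_\infty)^2$). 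Second, positivity of $B$ will turn out to be automatic once $A>0$. Throughout, $\phi$ denotes the function of Lemma~\ref{polylem} attached to creases joining the two non-vanishing edges $\{y=\beta_0\},\{y=\beta_\infty\}$, with $(0,0)$ the point where this crease degenerates to the vanishing edge $\{x=\alpha_0\}$; by Lemma~\ref{critpts0bdry}, $(0,0)$ and $(1,1)$ are critical points of $\phi$, and $\phi$ vanishes at both.

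For necessity I would argue contrapositively, going through the formal solution. If $\mathrm{Hess}\,\phi(0,0)$ is not positive semi-definite, then, $(0,0)$ being a critical point with $\phi(0,0)=0$, $\phi$ is negative nearby, giving a non-affine simple piecewise linear $f$ with $\mathcal L(f)<0$, so $(Q,d\sigma)$ is unstable. Otherwise, if $(Q,d\sigma)$ were stable it would admit preferred ambitoric coordinates: the hypothesis ``not an equipoised trapezium'' is in force, and stability makes both diagonals non-destabilising, so $\phi(1,0),\phi(0,1)>0$ and Lemma~\ref{orthoglem} applies. Then Proposition~\ref{stabandformsoleq} gives $A,B>0$, hence $A''(\alpha_0)>0$; and a direct computation in the ambitoric coordinates — using $\mathcal L(f)=\int_I H_{A,B}(u_f,u_f)\,d\nu_f$ for creases $I$ near $\{x=\alpha_0\}$ together with the second-order vanishing of $A$ at $\alpha_0$ — shows that $\mathrm{Hess}\,\phi(0,0)$ is positive-definite, contradicting our assumption. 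Hence $\mathrm{Hess}\,\phi(0,0)$ not positive-definite forces instability.

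For sufficiency, assume $\mathrm{Hess}\,\phi(0,0)$ is positive-definite. Running the same computation in reverse gives $A''(\alpha_0)>0$, hence $a>0$ and $A>0$ on $(\alpha_0,\alpha_\infty)$; since $A''(\alpha_\infty)=A''(\alpha_0)>0$ the Hessian of $\phi$ is positive-definite at $(1,1)$ as well, so by Lemma~\ref{prefcoordslem} (applied to the two vanishing edges, whose adjacent edges coincide, so that its hypothesis collapses to positive semi-definiteness of $\phi$ at its two corners) we obtain $\phi(1,0),\phi(0,1)>0$, and Lemma~\ref{orthoglem} then provides preferred ambitoric coordinates and a formal solution $H_{A,B}$. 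It remains to show $B>0$ on $(\beta_0,\beta_\infty)$; I would isolate this as Lemma~\ref{oppsideslem}. Here $d\sigma$ does not vanish on $\{y=\beta_0\},\{y=\beta_\infty\}$, so $B$ has simple zeros there and $B=(y-\beta_0)(y-\beta_\infty)\,C$ with $C$ a quadratic which, by \ref{boundcons1}, is negative at $\beta_0$ and at $\beta_\infty$; using the orthogonality of $\pi$ to $q$ in \ref{boundcons2} and that $q>0$ on $[\alpha_0,\alpha_\infty]\times[\beta_0,\beta_\infty]$, one checks that $a>0$ forces the leading coefficient of $C$ to be non-negative, whence $C<0$ throughout $(\beta_0,\beta_\infty)$ and $B>0$ there (this also gives the positivity of $\psi$ noted in the remark preceding the proposition). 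With $A,B>0$, Proposition~\ref{stabandformsoleq} yields that $(Q,d\sigma)$ is stable.

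The main obstacle is the computation behind Lemma~\ref{oppsideslem}: extracting from the single identity $A+B=q\pi$ with $\pi\perp q$ the sign of the leading coefficient of the quadratic factor $C$ of $B$. A secondary point requiring care is the bookkeeping in the ambitoric expression for $\mathrm{Hess}\,\phi(0,0)$, which must come out so that positive-\emph{definiteness} — rather than the positive semi-definiteness that suffices in the adjacent case of Proposition~\ref{adjsidesprop} — is exactly equivalent to $a>0$; the discrepancy is traceable to $A$ acquiring a double, rather than simple, zero at an edge where $d\sigma$ vanishes.
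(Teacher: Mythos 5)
Your overall architecture matches the paper's: both directions go through the formal ambitoric solution, $A$ is pinned down as $a(z-\alpha_0)^2(z-\alpha_\infty)^2$ so that positivity of $A$ on $(\alpha_0,\alpha_\infty)$ becomes the strict condition $a>0$, and this is read off from the second derivative of $c\mapsto\mathcal{L}(f_c)=A(c)h_c$. The genuine gap is your version of Lemma \ref{oppsideslem}. You propose to prove $B>0$ for \emph{every} weight with $a>0$ by factoring $B=(y-\beta_0)(y-\beta_\infty)C$ and asserting that ``one checks'' the orthogonality $\pi\perp q$ forces the leading coefficient of $C$ to be non-negative. Matching leading coefficients in $A+B=q\pi$ only yields $a+c_2=q_0\pi_0$, which does not determine the sign of $c_2$ without controlling $\pi_0$, and your sketch never actually engages the inner product in which $\pi\perp q$; you flag this yourself as the main obstacle, and it is not a routine verification. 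The paper does something quite different: it proves $B>0$ directly \emph{only} in the degenerate case where $\operatorname{Hess}\phi(0,0)$ is positive semi-definite but not definite. There $A\equiv 0$, so $B=q\pi$ exactly, and since $q$ is chosen with no roots in $[\beta_0,\beta_\infty]$, counting the zeros of the quartic $B$ shows it has constant, hence positive, sign on $(\beta_0,\beta_\infty)$ --- no leading-coefficient analysis at all. Positivity of $B$ on the interior of the Hessian-positive range of weights is then deduced from this boundary case together with the positivity of $A$ there, by a continuity argument in the weight $r$. To make your proof complete you must either genuinely carry out the sign computation you assert, or switch to the degenerate-case-plus-deformation route.

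A secondary issue is the direction in which you run the Hessian computation. For the ``only if'' direction you argue: stable $\Rightarrow A,B>0\Rightarrow\operatorname{Hess}\phi(0,0)$ positive-definite; and for the ``if'' direction you derive positive-definiteness at $(1,1)$ from $a>0$ in order to feed Lemma \ref{prefcoordslem}. Both steps need the full converse statement that $a>0$ and $B>0$ make the entire $2\times 2$ Hessian positive-definite, not merely that its restriction to the ``translate the crease'' direction is a positive multiple of $A''(\alpha_0)$: a general infinitesimal tilt of the crease at $(0,0)$ picks up a second-order contribution from the $B$-term of $H_{A,B}$ as well, and you must check that these two contributions jointly fill out the whole $2\times2$ matrix. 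The step is also mildly circular as written, since $a$ is only defined once preferred ambitoric coordinates, i.e. Lemma \ref{prefcoordslem}, are already in hand. The paper instead handles the borderline case by observing that degeneracy of a positive semi-definite Hessian forces $A''(\alpha_0)=0$, hence $A\equiv 0$, hence $\mathcal{L}(f_c)=0$ for the whole family of creases $x=c$, so the polytope is strictly semistable rather than stable; this is lighter on the bookkeeping you would need to supply.
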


\begin{rem} We will see below that in the case of equipoised trapezia the same conclusion holds, but for now we will consider the cases where we can apply the ACG construction.
\end{rem}

\begin{proof} As before, the points corresponding to an affine function are critical points of $\phi$. Therefore, if the Hessian is not positive semi-definite at these points, then $\phi$ decreases in some direction. Since $\phi$ is zero at these points, it follows that if the Hessian is not positive semi-definite, then $(Q, d\sigma)$ is not stable. 

Now assume the determinant condition holds. By lemma \ref{prefcoordslem}, $(Q, d\sigma)$ admits preferred ambitoric coordinates. Without loss of generality, assume that in these coordinates, the edges with $0$ boundary measure correspond to $x = \alpha_0$ and $x= \alpha_{\infty}$, respectively. Let $A,B$ be the quartics for the formal solution $H_{A,B}$. 

To show that the $B$ is positive, it suffices to show that all $B$'s in a region of weights containing the weights for which the Hessian condition holds, are positive. This follows from lemma \ref{oppsideslem} below and the positivity of the $A$ on $(\alpha_0, \alpha_{\infty})$ for all weights satisfying the Hessian condition that we will now show. 

We do this by a similar argument as in the adjacent case, considering the second derivative of the Donaldson-Futaki invariant 
\begin{align*} \mathcal{L} (f_c) = A(c) h_c
\end{align*}
of the family $f_c$ of simple piecewise linear functions with crease $x=c$. The sign of this is the same as $A''(\alpha_0)$. Now, since $A$ is a quartic with a double zero at both $\alpha_0$ and $\alpha_{\infty}$, it follows that $A(z) = \lambda (z-\alpha_0)^2 (z-\alpha_{\infty})^2$. Thus 
\begin{align*} A''(\alpha_0) = \lambda (\alpha_0 - \alpha_{\infty})^2.
\end{align*}
In particular, the sign of $\lambda$, which is positive if and only if $A$ is positive on $(\alpha_0, \alpha_{\infty})$, equals the sign of the second derivative of $\mathcal{L} (f_c) $. Since $\phi$ is positive-definite at the critical point, it follows that $\mathcal{L} (f_c) >0$. Thus $\lambda > 0$ and so $A$ is positive throughout $(\alpha_0, \alpha_{\infty})$. 

Finally we must consider the borderline case when the Hessian is strictly positive semi-definite. The above also shows that if $\phi$ is only positive semi-definite, then $\mathcal{L}$ vanishes on functions with crease $x= \alpha$ for all $\alpha \in [\alpha_0, \alpha_{\infty}]$. So in this case positive semi-definiteness is not sufficient, one needs $\phi$ to be positive-definite.
\end{proof}

To complete the proof of proposition \ref{oppsidesprop}, we must show the following lemma.

\begin{lem}\label{oppsideslem} Let $(Q, d\sigma)$ be a quadrilateral admitting preferred ambitoric coordinates and for which $d\sigma$ vanishes on two opposite edges of $Q$. Moreover, suppose $d\sigma$ is such that the Hessian of $\phi$ is strictly positive semi-definite at $(0,0)$. Then for the formal solution $H_{A,B}$ associated to $d\sigma$, $B$ is positive on $(\beta_0, \beta_{\infty})$.
\end{lem}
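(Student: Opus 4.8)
The plan is to work in preferred ambitoric coordinates for $(Q,d\sigma)$ and analyse the quartics $A,B$ of the formal solution $H_{A,B}$ directly. Choose the coordinates so that the two opposite edges on which $d\sigma$ vanishes are $\{x=\alpha_0\}$ and $\{x=\alpha_\infty\}$, so the other pair of opposite edges is $\{y=\beta_0\}$ and $\{y=\beta_\infty\}$, on which $d\sigma$ does \emph{not} vanish; recall $\alpha_0<\alpha_\infty<\beta_0<\beta_\infty$. Since the weight vanishes on the two $x$-edges, the boundary conditions \ref{boundcons1} force $A$ to have a double zero at $\alpha_0$ and at $\alpha_\infty$, so $A(z)=\lambda(z-\alpha_0)^2(z-\alpha_\infty)^2$. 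As in the proof of Proposition \ref{oppsidesprop}, $\mathcal{L}$ evaluated on the simple piecewise linear function with crease $\{x=c\}$ equals $A(c)$ times an everywhere positive factor, and, as $c$ runs over $[\alpha_0,\alpha_\infty]$, this one-parameter family is the restriction of $\phi$ to a curve through $(0,0)$; since the Hessian of $\phi$ at $(0,0)$ is positive semi-definite, the quadratic part of this restriction is non-negative, which forces $A''(\alpha_0)\ge 0$ and hence $\lambda\ge 0$. Finally I would write $B=q\pi-A$, where $q$ is the fixed quadratic (positive on the relevant range of $z$) and $\pi$ is the quadratic orthogonal to $q$ produced by \ref{boundcons2}; because $d\sigma$ does not vanish on the $y$-edges, $B$ has a \emph{simple} zero at each of $\beta_0,\beta_\infty$, with one-sided derivatives of the sign that makes $B$ positive just inside $[\beta_0,\beta_\infty]$.

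If $\lambda=0$ the claim is immediate: then $A\equiv 0$, so $B=q\pi$; as $q>0$ on $[\beta_0,\beta_\infty]$ and $B$ vanishes precisely at the two endpoints there, $\pi(z)=c(z-\beta_0)(z-\beta_\infty)$, and the sign of $B'(\beta_0)$ — a positive multiple of a strictly positive weight — forces $c<0$, so $\pi$, and therefore $B$, is positive throughout $(\beta_0,\beta_\infty)$. Hence from now on I may assume $\lambda>0$, i.e. $A>0$ on $(\alpha_0,\alpha_\infty)$, which is exactly the regime in which the proof of Proposition \ref{oppsidesprop} establishes positivity of $A$.

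Now suppose, for contradiction, that $B$ is not positive on all of $(\beta_0,\beta_\infty)$. Since $B>0$ near both endpoints, $B$ has two roots $\rho_1\le\rho_2$ in the open interval; comparing the sign of $B$ just to the right of $\beta_0$ then forces the leading coefficient of $B$ to be negative and $B(z)\le 0$ for $z\notin(\beta_0,\beta_\infty)$, so in particular $B\le 0$ on $[\alpha_0,\alpha_\infty]$, where $A$ vanishes at the two ends and is positive in between. Substituting into $A+B=q\pi$ and using $q>0$ gives the sign pattern $\pi(\alpha_0)\le 0$, $\pi(\alpha_\infty)\le 0$, $\pi(\beta_0)>0$, $\pi(\beta_\infty)>0$ on the configuration $\alpha_0<\alpha_\infty<\beta_0<\rho_1\le\rho_2<\beta_\infty$. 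The contradiction should then be extracted by feeding these inequalities, the remaining conditions $A'(\alpha_0)=A'(\alpha_\infty)=0$ together with the prescribed signs of $B'(\beta_0),B'(\beta_\infty)$, and the orthogonality of $\pi$ and $q$ from \ref{boundcons2}, into a short computation: no quadratic $\pi\perp q$ is compatible with all of these. I expect this sign bookkeeping to be the only real obstacle; morally, the point will be that $B$ dipping below zero in $(\beta_0,\beta_\infty)$ is incompatible with the double-zero-and-non-negativity structure of $A$ that semi-definiteness of the Hessian of $\phi$ at $(0,0)$ guarantees, and the same mechanism simultaneously forces the companion statement used in Proposition \ref{oppsidesprop}.
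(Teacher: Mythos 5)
Your $\lambda=0$ branch is precisely the paper's proof: the vanishing weights force double zeros of the quartic $A$ at $\alpha_0,\alpha_\infty$, so $A=\lambda(z-\alpha_0)^2(z-\alpha_\infty)^2$; degeneracy of the Hessian of $\phi$ at $(0,0)$ gives $\lambda=0$, hence $B=q\pi$; since $q$ has no zeros in $[\beta_0,\beta_\infty]$ we get $\pi=c(z-\beta_0)(z-\beta_\infty)$, and the sign of $B'(\beta_0)$ (a positive multiple of a strictly positive weight) forces $c<0$ and so $B>0$ on $(\beta_0,\beta_\infty)$. What you have missed is that in this paper ``strictly positive semi-definite'' means positive semi-definite but \emph{not} positive definite (see the last paragraph of the proof of Proposition \ref{oppsidesprop}), so the hypothesis of the lemma already forces $A''(\alpha_0)=0$, i.e.\ $\lambda=0$: your first branch is the entire proof, and the lemma is exactly the statement about these degenerate boundary weights. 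Your second branch ($\lambda>0$) is therefore vacuous here, but I flag that, as written, it is not a proof: you reduce to the sign pattern $\pi(\alpha_0)<0$, $\pi(\alpha_\infty)<0$, $\pi(\beta_0)>0$, $\pi(\beta_\infty)>0$ and assert that no quadratic $\pi$ orthogonal to $q$ realises it, but quadratics with that sign pattern certainly exist (one root in $(\alpha_\infty,\beta_0)$, one outside $[\alpha_0,\beta_\infty]$), so any contradiction must come from the orthogonality and the remaining derivative conditions, and that computation is neither carried out nor obviously feasible. Had your reading of the hypothesis been the intended one, this would be a genuine gap; in the paper the positivity of $B$ for positive-definite Hessians is not obtained this way, but by combining the present lemma at the two boundary weights with the positivity of $A$ and a connectedness argument in the weight $r$, inside the proof of Proposition \ref{oppsidesprop}.
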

\begin{proof} Under this Hessian condition, it follows that in the formal solution $A$ is identically zero. Thus the formal solution satisfies
\begin{align*} \pi(z) q(z) &= A(z) + B(z) \\
&= B(z).
\end{align*}
Thus $q$ divides $B$ and so the zeros of $B$ are $\beta_0, \beta_{\infty}$ and the zeros of $q$. However, recall that $q$ must be chosen so that it does not have any zeros in $[\beta_0, \beta_{\infty}]$. Thus $B$ has no zeros in $(\beta_0, \beta_{\infty})$, and so has constant sign in this interval. Since the boundary conditions imply that $B$ increases from $\beta_0$, it therefore follows that $B$ is positive throughout $(\beta_0, \beta_{\infty})$, as required.
\end{proof}

We have now found an easily computable criterion for stability for all weighted quadrilaterals with boundary measure vanishing on two edges, apart from equipoised trapezia, where either the boundary measure vanishes on two non-parallel sides or the boundary measure vanishes on the two parallel sides and is equal on the two non-parallel sides, using a normalisation as in \cite[Eqn. 4.7]{elegendre11}. 

Of these two cases, the former are always unstable by an example due to Sz\'ekelyhidi.

\begin{prop}[{\cite[Prop. 15]{szekelyhidi08}} ] Suppose $Q$ has parallel sides. Then for any boundary measure which is supported on the two parallel sides, $(Q, d\sigma)$ is strictly semistable. 
\end{prop}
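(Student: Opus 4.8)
The plan is to reduce to the case $Q$ a parallelogram, and then use an explicit splitting of the Calabi functional. First I would observe that it suffices to treat a single normalized boundary measure supported on the two parallel sides, since by the scaling lemma stability is invariant under positive rescaling of the weight, and by Lemma \ref{setofstblwtslem} the set of stable weights is convex; hence the whole segment of weights supported on the parallel pair has the same qualitative behaviour once we know both endpoints (and an interior point) are strictly semistable, or more directly we just argue for an arbitrary such $d\sigma$ directly. Set up coordinates so that $Q$ has a pair of parallel edges, say $E_1$ and $E_3$, on which $d\sigma$ is supported, and $d\sigma$ vanishes identically on the other two edges $E_2, E_4$.

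Next I would exhibit an explicit non-affine piecewise linear function $f$ with $\mathcal{L}(f)=0$, which simultaneously shows $\mathcal{L}\geq 0$ (semistability) and the failure of strict stability. The natural candidate is a simple piecewise linear function whose crease is the line parallel to $E_2$ and $E_4$ through the "middle" of $Q$ in the direction transverse to the parallel edges — more precisely, using Lemma \ref{critpts0bdry}, the crease running between the two edges $E_1, E_3$ with vanishing measure should be a critical point of the relevant bidegree-$(3,3)$ polynomial $\phi$ of Lemma \ref{polylem}. When $Q$ is a parallelogram this crease direction is genuinely translation-invariant, and one computes directly that $\mathcal{L}$ vanishes on the entire one-parameter family of such parallel creases; this is exactly the statement that collapsing an $S^1$ in the fibre of the $\mathbb{CP}^1$-bundle over $\mathbb{CP}^1$ has zero Futaki-type obstruction. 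For the general trapezium case (non-parallelogram) the argument is subtler: here one uses negative ambitoric coordinates in which $p_2$ (say) is sent to infinity, producing the trapezium, and the construction of Section \ref{formalsols} degenerates so that one of the quartics $A$ or $B$ becomes identically a perfect square times $q$; the corresponding creases then furnish the zero of $\mathcal{L}$.

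For the inequality $\mathcal{L}(f) \geq 0$ for \emph{all} piecewise linear $f$, I would use Proposition \ref{stabandformsoleq} together with the formula \eqref{splACG}: any piecewise linear function decomposes into a non-negative combination of simple piecewise linear contributions $\int_I H(u_f,u_f)\, d\nu_f$, and the formal solution $H_{A,B}$ for a trapezium-with-parallel-measure has $A$ (or $B$) a non-negative polynomial — it is a square (or $q$ times a square) — hence $H$ is positive \emph{semi}-definite, giving $\mathcal{L}\geq 0$ but with a whole codimension-one locus of creases on which it vanishes. One must handle the equipoised-trapezium subcase by hand with Legendre's Calabi-type construction of \cite{elegendre11} rather than the ACG ansatz, but the conclusion — $A$ or $B$ identically a perfect square, hence $H\succeq 0$ with a nontrivial kernel along a family of creases — is the same.

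The main obstacle I expect is verifying that $\mathcal{L}$ vanishes on the full family of parallel creases (not just one), and simultaneously that it is strictly positive on every crease \emph{not} parallel to $E_2, E_4$, so that the degeneracy is exactly codimension one and $(Q,d\sigma)$ is strictly semistable rather than unstable. This is where the bidegree-$(3,3)$ structure of $\phi$ from Lemma \ref{polylem} and the critical-point statement of Lemma \ref{critpts0bdry} must be combined with an explicit sign analysis of the quartic $A$ (resp. $B$): one needs that after imposing the boundary conditions \eqref{boundcons1}--\eqref{boundcons2} with $r$ vanishing on the two relevant edges, the quartic is forced to be a nonzero non-negative multiple of a perfect square, with its only real zeros at the prescribed endpoints. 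The parallelogram case is essentially a direct computation via the product structure ($\mathbb{P}^1$ with a cusp at one pole and a cone point — or smooth point — at the other, times $\mathbb{P}^1$), which already appears in the proof of the corollary counting connected components of the unstable set; the trapezium case requires carefully tracking how the ACG quartics degenerate as a vertex escapes to infinity.
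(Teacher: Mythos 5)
The paper itself offers no proof of this proposition: it is imported wholesale as \cite[Prop.~15]{szekelyhidi08} and used as a black box, so your attempt can only be measured against Sz\'ekelyhidi's original (essentially direct, computational) argument, not against anything in this text.

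There are two genuine gaps in your plan. First, the engine you rely on for the inequality $\mathcal{L}\geq 0$ --- the formal solution $H_{A,B}$ in preferred ambitoric coordinates, Proposition \ref{stabandformsoleq} and formula \eqref{splACG} --- is structurally unavailable here. A trapezium whose boundary measure is supported on its two parallel edges is \emph{always} an equipoised trapezium (its associated affine function $\zeta$ is constant along each parallel edge, so the alternating vertex sum vanishes; this is exactly how the paper's own case division at the end of Section \ref{stabinv} classifies these weights), and equipoised trapezia are precisely the case excluded from Lemma \ref{orthoglem}. You treat the equipoised subcase as an exception to be ``handled by hand,'' but it is not a subcase: it is the entire content of the proposition for non-parallelograms. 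In Legendre's Calabi-type framework, which is what actually applies, the relevant polynomial $B$ has degree two, so the double-vanishing boundary conditions at $\beta_1$ and $\beta_2$ force $B\equiv 0$ (not ``a perfect square times $q$''); one is then still left to verify that $A\geq 0$ on $(\alpha_1,\alpha_2)$ for \emph{arbitrary} weights $r_1,r_2$ on the two parallel edges, and this verification --- the actual content of semistability --- is asserted but never carried out.

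Second, the destabilising family is misidentified. For a genuine trapezium the two unmeasured edges are not parallel to one another, so ``the line parallel to $E_2$ and $E_4$'' is undefined (and your setup contradicts itself about which pair of edges carries the measure). The creases on which $\mathcal{L}$ vanishes form the pencil of lines through the intersection point of the extensions of the two non-parallel edges --- the fibres of the ruling --- and this degenerates to a family of parallel lines only when $Q$ is a parallelogram; an explicit check on, say, the trapezium with vertices $(0,0),(1,0),(1,1),(0,2)$ and unit weights on the vertical edges shows that creases parallel to either unmeasured edge, or perpendicular to the parallel edges, all give $\mathcal{L}>0$, while every line through $(2,0)$ gives $\mathcal{L}=0$. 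Lemma \ref{critpts0bdry} only says that the two \emph{endpoints} of this pencil are critical points of the polynomial of Lemma \ref{polylem}; it does not produce the interior vanishing you need. The cleanest repair is to bypass the ambitoric machinery entirely: realise the trapezium as the image of $[\alpha_1,\alpha_2]\times[\beta_1,\beta_2]$ under $(x,y)\mapsto(x,xy)$, note $\zeta$ depends only on $x$, and compute $\mathcal{L}$ directly on simple piecewise linear functions of $y$ and on general creases, which is in effect Sz\'ekelyhidi's proof.
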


For the latter case, let $E$ and $F$ be the two sides that are not parallel. We can apply a simple argument using the ACG construction in almost all situations to determine the stability of the boundary measure $d\sigma_r$ corresponding to $r E + (1-r) F$. The construction applies to all but one value of $r$, say $r'$. Doing this we get from proposition \ref{oppsidesprop} that for $r \in (0,1) \setminus \{r'\}$, $d\sigma_r$ is a stable weight for all $r \in (r_0,r_1) \setminus \{ r' \}$, for some $r_0,r_1$. Since the stable set is connected it follows that provided $r'$ is neither $r_0$ nor $r_1$, the stable set is $(r_0,r_1)$. 

To rule out that $r'$ can be one of the $r_i$ and be a stable weight, we briefly mention the construction of Legendre in \cite{elegendre11} for equipoised trapezia. From this it will also be clear that the arguments of the next section will apply to this construction, so that stability for such trapezia are equivalent to the existence of an extremal metric with Poincar\'e type singularities along the two divisors corresponding to the opposite edges.

In this case, one can realize the moment polytope as the image of $[\alpha_1, \alpha_2] \times [\beta_1, \beta_2]$ under the map
\begin{align*} (x,y) \mapsto (x, xy),
\end{align*}
for some $\alpha_2 > \alpha_1 >0$ and $\beta_2 > \beta_1 \geq 0$. Let $t_1,t_2$ be the angle coordinates corresponding to these coordinates. One obtains metrics from two functions $A : [\alpha_1, \alpha_2] \rightarrow \mathbb{R}$ and $B : [\beta_1, \beta_2] \rightarrow \mathbb{R}$, positive on the interiors of their domains, as
\begin{align*} \frac{x dx^2}{A(x)} + \frac{x dy^2}{B(y)} + \frac{A(x)}{x} (dt_1 + y dt_2)^2 + x B(y) dt_2^2,
\end{align*}
whenever $A,B$ vanish at the end-points, and the derivatives of $A$ and $B$ at the end-points are determined by $d\sigma$, for positive weights. 

The extremal condition is that $A$ is a polynomial of degree at most $4$, $B$ is a polynomial of degree $2$ with leading term $- a_2$, where $a_2$ is the coefficient of $x^2$ in $A$. This determines $A$ and $B$ uniquely and puts a condition on the conormals, only involving those along the edges $y=\beta_1$ and $y=\beta_2$. As before we can let $A$ have double zeros at either end-points, which correspond to the boundary measure vanishing at $x= \alpha_1$ or $x= \alpha_2$. In the case when the boundary measure vanishes on both sides, one can then use exactly the same arguments as before to determine that stability is equivalent to this formal solution being positive on the interior, and that this in turn is equivalent to the Hessian condition of \ref{oppsidesprop}.

We now analyse the stable region, with the goal of proving that the region in which the Hessian condition is satisfied is non-empty, unless $(Q, d\sigma)$ satisfies the conditions of Sz\'ekelyhidi's example. We first consider measures supported on adjacent sides.

\begin{prop}\label{adjsidesstab} Let $Q$ be a quadrilateral and fix two adjacent sides $E$ and $F$. Then there exists a boundary measure $d\sigma$ supported on $E$ and $F$ such that $(Q, d\sigma)$ is stable.
\end{prop}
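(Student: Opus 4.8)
The plan is to reduce Proposition~\ref{adjsidesstab} to the computable criterion established in Proposition~\ref{adjsidesprop}, and then show that the relevant Hessian-positivity region is nonempty by a limiting argument. Concretely, by the normalisations of section~\ref{genprops} we may assume $E = E_i$ and $F = E_j$ are two adjacent edges, and we consider the one-parameter family $d\sigma_r$ corresponding to $r E_i + (1-r) E_j$, together with the polynomials $\phi_r,\psi_r$ of Lemma~\ref{polylem} associated to the two pairs of opposite edges. By Lemma~\ref{critpts0bdry} the points $p,q$ in $[0,1]^2$ whose crease is an edge with vanishing boundary measure are critical points of $\phi_r,\psi_r$, so by Proposition~\ref{adjsidesprop} stability of $(Q,d\sigma_r)$ for $r \neq 0,1$ is equivalent to both Hessians $\mathrm{Hess}\,\phi_r(p)$ and $\mathrm{Hess}\,\psi_r(q)$ being positive semi-definite. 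Thus it suffices to produce a single $r \in (0,1)$ at which both Hessians are positive semi-definite.

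First I would record, using the lemma just after the statement of Lemma~\ref{critpts0bdry}, that the determinants $D_\phi(r) := \det \mathrm{Hess}\,\phi_r(p)$ and $D_\psi(r) := \det \mathrm{Hess}\,\psi_r(q)$ are quadratic polynomials in $r$; likewise the traces (equivalently, the individual second partials $\partial_s^2\phi_r(p)$ etc.) are affine in $r$, since $\phi_r,\psi_r$ depend affinely on $r$ by Lemma~\ref{setofstblwtslem}. So the Hessian is positive semi-definite exactly where the determinant is $\geq 0$ and the diagonal entry is $\geq 0$, both polynomial (indeed low-degree) conditions in $r$. The strategy is to analyse the endpoints $r=0$ and $r=1$: at $r=0$ the measure is supported on $E_j$ alone, which is unstable, and at $r=1$ it is supported on $E_i$ alone, again unstable; but the nature of the failure is controlled. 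The key computation — and I expect this to be the main obstacle — is to show that the determinant conditions $D_\phi \geq 0$ and $D_\psi \geq 0$ are \emph{simultaneously} satisfiable, i.e. the two quadratics are non-negative on overlapping subintervals of $(0,1)$. As in the proof of Lemma~\ref{prefcoordslem}, one expects a direct calculation showing that the sign of $D_\phi$ and the sign of $D_\psi$ are linked (they cannot both be negative throughout $(0,1)$), and since each is a quadratic in $r$ that is negative at the extremes of the instability, their non-negativity sets are complementary-ish intervals whose overlap is exactly the stable region $(r_0,r_1)$ of Theorem~\ref{mainquadthm}; one then checks this overlap is nonempty precisely because $Q$ is not a parallelogram (the adjacent, non-parallel hypothesis is automatic here since adjacent sides of a convex quadrilateral are never parallel).

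A cleaner route to the nonemptiness, which I would pursue in parallel, is to exhibit an explicit stable weight by a degeneration/continuity argument from the positive case: by Apostolov--Calderbank--Gauduchon \cite[App. B]{ACG15} (quoted in section~\ref{genprops}), since $Q$ is not a parallelogram it admits stable weights when all $r_i$ are strictly positive, and the stable set is a convex cone (Lemma~\ref{setofstblwtslem}) in the simplex $\sum r_i = 1$. One then needs to show that this open stable region has a limit point on the face $\{r_i : \text{supported on } E,F\}$. This is where the Hessian criterion is indispensable: openness of stability fails at the boundary, so I cannot simply take limits of stable positive weights; instead I would use Proposition~\ref{adjsidesprop} to see that the closure of the positive-weight stable region meets the face exactly in the set where the two Hessian determinants are $\geq 0$, and argue that this set is nonempty because the quadratics $D_\phi(r),D_\psi(r)$ inherit, in the limit, the sign information from the nearby positive stable weights. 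The hard part throughout is the explicit verification that these two quadratics overlap; I would handle it via the ambitoric coordinates of section~\ref{formalsols}, where (as in the proof of Proposition~\ref{oppsidesprop}) the second derivative of $\mathcal{L}(f_c)$ along the family of creases $x = c$ is $A''(\alpha_0)\,h_{\alpha_0}$ with $h_{\alpha_0}>0$, so $D_\phi \geq 0$ translates into positivity of a leading coefficient of the quartic $A$, and the Legendre/ACG normalisation constraints then force the corresponding statement for $B$, giving the overlap.
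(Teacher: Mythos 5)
Your reduction is the right one: by Proposition \ref{adjsidesprop} (and Corollary \ref{detcondcor}) stability of $(Q,d\sigma_r)$ for $r\in(0,1)$ is equivalent to $r$ lying in $[c_0,c_1]\cap[c_2,c_3]\cap(0,1)$, where $[c_0,c_1]$ and $[c_2,c_3]$ are the non-negativity intervals of the two determinant quadratics $D_\phi(r)$, $D_\psi(r)$. But the entire content of the proposition is the nonemptiness of that intersection, and neither of your two routes to it closes the gap. The sign linkage you hope to extract from the proof of Lemma \ref{prefcoordslem} is not the one you need: what that computation actually shows is that at the special values $r_1,r_2$ where $\phi_r(1,0)=0$ or $\phi_r(0,1)=0$ the two determinants have \emph{opposite} signs --- this is used to place the determinant region inside the region where neither diagonal destabilises, and is perfectly consistent with one determinant being negative wherever the other is non-negative. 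The degeneration from the positive-weight stable region of \cite[App.~B]{ACG15} only yields a \emph{semistable} weight on the face $\{rE+(1-r)F\}$, since $\mathcal{L}_{\underline{r}}(f)$ is continuous in $\underline{r}$ for each fixed $f$ and limits of stable weights therefore satisfy only $\mathcal{L}(f)\geq 0$; as the paper shows the stable set is neither open nor closed, ``inheriting sign information'' in the limit is not an argument. Finally, the claim that the ACG normalisation ``forces the corresponding statement for $B$'' is special to the opposite-sides case (Lemma \ref{oppsideslem}, where $A\equiv 0$ forces $B=q\pi$); for adjacent sides the two Hessian conditions are genuinely independent, which is exactly why the stable set there is an intersection of two intervals. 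In effect you are transplanting the strategy the paper uses for Proposition \ref{oppsidesstab} (a single explicit quadratic, evaluated at its critical point) to a situation with two coupled quadratics, without the computation that would make it work.

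The paper closes this gap by an entirely different mechanism, which your proposal does not contain or substitute for. It argues by contradiction: if no weight supported on $E$ and $F$ were stable, there would exist a strictly semistable weight whose unique destabilising simple piecewise linear function has crease through a vertex; if that crease were not a diagonal, $(Q,d\sigma)$ would admit preferred ambitoric coordinates and the formal solution $H_{A,B}$ would be positive-definite, forcing stability. So the destabilising crease must be a diagonal, and strict semistability along a diagonal forces the two triangles $\Delta_1,\Delta_2$ cut out by it (with zero measure on the diagonal) to have equal associated affine linear functions. Lemma \ref{subtrilem} shows by direct computation that this never happens for a convex quadrilateral. Without Lemma \ref{subtrilem} or an equivalent explicit input, your argument does not establish the proposition.
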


The key step to proving this is the following lemma.
\begin{lem}\label{subtrilem} Let $(Q,d\sigma)$ be a weighted quadrilateral, and let $\Delta_1, \Delta_2$ be the two triangles obtained by splitting $Q$ in two via a diagonal. Define a boundary measure $d \tau_i$ on $\Delta_i$ to be $d\sigma$ on the edges shared with $Q$ and $0$ on the edge corresponding to the diagonal of $Q$. Then the associated affine linear functions $A_i$ of $(\Delta_i, d\tau_i)$ are never equal.
\end{lem}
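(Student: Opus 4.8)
Suppose, for contradiction, that $A_1=A_2=:A$. By Definition \ref{assafffn}, $A_i$ is characterised by $\int_{\Delta_i} A_i f\, d\lambda=\int_{\partial\Delta_i} f\, d\tau_i$ for every affine linear $f$. Since the diagonal carries no mass for either $d\tau_i$, and the other two edges of $\Delta_i$ are edges of $Q$ on which $d\tau_i$ coincides with $d\sigma$, summing the two identities over $i=1,2$ and using that $\Delta_1\cup\Delta_2=Q$ with disjoint interiors gives $\int_Q A f\, d\lambda=\int_{\partial Q}f\,d\sigma$ for all affine $f$. By the uniqueness clause of Definition \ref{assafffn}, $A=\zeta$, the affine function associated to $(Q,d\sigma)$. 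So it suffices to rule out $A_1=\zeta$.

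I now reformulate this. Write $d\sigma=d\sigma'+d\sigma''$, where $d\sigma'$ is the restriction of $d\sigma$ to the two edges of $Q$ bounding $\Delta_1$ and $d\sigma''$ the restriction to the two bounding $\Delta_2$, and let $\zeta',\zeta''$ be the affine functions associated to $(Q,d\sigma')$ and $(Q,d\sigma'')$. By linearity of this association in the weights (see the proof of Lemma \ref{setofstblwtslem}), $\zeta=\zeta'+\zeta''$. Using $A_1=\zeta$ we get, for every affine $f$,
\begin{align*}\int_{\Delta_1}\zeta f\, d\lambda=\int_{\partial\Delta_1}f\,d\tau_1=\int_{\partial Q}f\,d\sigma'=\int_Q\zeta' f\,d\lambda,\end{align*}
and subtracting $\int_{\Delta_1}\zeta' f\, d\lambda$ from both sides gives the identity
\begin{align*}\int_{\Delta_1}\zeta'' f\,d\lambda=\int_{\Delta_2}\zeta' f\,d\lambda\qquad\text{for all affine }f,\end{align*}
i.e.\ the signed measure equal to $\zeta''\,d\lambda$ on $\Delta_1$ and to $-\zeta'\,d\lambda$ on $\Delta_2$ has vanishing moments of order $\leq 1$. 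Equivalently, letting $h$ be the affine function vanishing on the diagonal and positive on $\Delta_1$, and noting that $\max\{0,h\}=h$ on $\Delta_1$ and $0$ on $\Delta_2$ while $d\sigma''$ is supported where $\max\{0,h\}$ vanishes (and symmetrically for $\max\{0,-h\}$ and $d\sigma'$), the identity with $f=h$ reads
\begin{align*}\mathcal{L}_{(Q,d\sigma')}\big(\max\{0,-h\}\big)+\mathcal{L}_{(Q,d\sigma'')}\big(\max\{0,h\}\big)=0.\end{align*}
Two special cases are already excluded: if one of $d\sigma',d\sigma''$ is the zero measure, then one of $\zeta',\zeta''$ is identically $0$, so the moment identity forces the other to pair to zero against the full space of affine functions on a (nondegenerate) triangle, hence to vanish — impossible, since the affine function associated to a non-zero boundary measure has positive integral over $Q$; and if one of $(Q,d\sigma'),(Q,d\sigma'')$ is stable, the displayed $\mathcal{L}$-identity is contradictory because $\max\{0,\pm h\}$ are not affine.

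It remains to treat the general case. Here I would pass to the normalisation of section \ref{genprops}, in which $\Delta_1,\Delta_2$, the measure $d\sigma$, and hence $\zeta',\zeta''$ are written explicitly, with the coefficients of $\zeta',\zeta''$ being non-negative-linear expressions in the four edge weights; the moment identity $\int_{\Delta_1}\zeta'' f\,d\lambda=\int_{\Delta_2}\zeta' f\,d\lambda$ ($f=1,x,y$) then becomes an explicit homogeneous linear system in the weights, and one checks by direct computation that over $\mathbb{R}^{4}_{\geq 0}$ its only solution is the zero measure, contradicting $d\sigma\not\equiv 0$. I expect this last step to be the main obstacle: the moment identity is a system of three linear equations in four unknowns and so always has non-zero solutions without a sign constraint (as the square example shows, $\zeta'$ need not be positive on $\Delta_2$, so no soft positivity argument suffices), and the entire force of the lemma comes from combining it with the non-negativity of $d\sigma$ and the convex geometry of $Q$.
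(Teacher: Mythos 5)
Your reduction is sound as far as it goes: the summing argument showing $A_1=A_2$ forces $A_1=\zeta$, the splitting $d\sigma=d\sigma'+d\sigma''$ with $\zeta=\zeta'+\zeta''$, and the resulting moment identity $\int_{\Delta_1}\zeta''f\,d\lambda=\int_{\Delta_2}\zeta'f\,d\lambda$ for affine $f$ are all correct. But the lemma is not proved, and you have correctly identified where: the entire content lies in the final step you defer to ``one checks by direct computation,'' namely that the homogeneous $3\times 4$ linear system obtained from the moment identity (or equivalently from $A_1=A_2$ directly) has no non-zero solution in $\mathbb{R}^4_{\geq 0}$ when $Q$ is convex. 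This is precisely what the paper's proof consists of, and it is not a soft argument: the paper normalises the diagonal to the segment from $(-1,0)$ to $(1,0)$ with the opposite vertices at $(0,c)$, $c>0$, and $(p,-q)$, $q>0$, computes the two associated affine functions of the triangles explicitly, uses the sign of the $y$-coefficients to force $|p|\geq 1$, and then solves for $r_3$ in terms of $r_1,r_2$; since that solution is affine in $p$ and is negative at both endpoints of the admissible range $[1,1+\tfrac{q}{c}]$, it is negative throughout, contradicting $r_3\geq 0$. Convexity of $Q$ enters exactly in bounding the range of $p$, so no argument that ignores the geometry of where the fourth vertex can sit will close the gap. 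Without carrying out this computation (or an equivalent one), your proposal is a correct reformulation of the problem rather than a proof of the lemma.

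Two smaller points. First, your ``stable'' special case is incomplete as stated: from $\mathcal{L}_{(Q,d\sigma')}(\max\{0,-h\})+\mathcal{L}_{(Q,d\sigma'')}(\max\{0,h\})=0$ you need \emph{both} summands to be non-negative and at least one strictly positive, so you must assume one of the two weighted quadrilaterals is stable \emph{and} the other semistable; stability of one alone does not preclude the other term being negative. Second, the parenthetical claim that the coefficients of $\zeta',\zeta''$ are non-negative-linear in the weights is not needed and is not true in general (already for the triangles the paper's $A_1$ has $x$-coefficient $3(r_1-r_2)$), though nothing in your argument actually relies on it.
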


\begin{proof} We can assume that $\Delta_1$ has vertices $(-1,0)$, $(0,1)$ and $(0,c)$ with $c > 0$ and that $\Delta_2$ as vertices $(-1,0),(0,1)$ and $(p,-q)$ with $q > 0$. The edges $E_i$ of $Q$ have defining functions
\begin{align*} l_1 (x,y) &= c- cx - y , \\
l_2 (x,y) &= c + c x -y , \\
l_3 (x,y) &= q + qx +(1+p)y, \\
l_4(x,y) &= q-qx+(1-p)y.
\end{align*}
We must also assume that $l_1 (p,q)$ and $l_2(p,q)$ are positive to ensure that $\Delta_1 \cup \Delta_2$ is a convex quadrilateral. The boundary measure along $E_i$ can be written as $r_i dy$, for some $r_i \geq 0$, but not all $0$. Let $d\sigma_1$ be the boundary measure for $\Delta_1$ and similarly for $\Delta_2$. 

A long but elementary calculation shows that the affine linear function $A_i$ associated to $(\Delta_i, d\sigma_i)$ is given by
\begin{align*} A_1 (x,y) )=& 3 (r_1 - r_2) x + \frac{3(r_1 + r_2)}{c} y, \\
A_2 (x,y) =& 3 (r_4 - r_3) x - \frac{3(r_3 + r_4 + r_3 p - r_4 p)}{q} y.
\end{align*}
We must show that they never can be equal provided $Q$ is convex.

First of all, if $p \in (-1,1)$, then the coefficient of $y$ for $A_2$ is negative. Since the coefficient of $y$ for $A_1$ is always non-negative, this means we must have $p \notin (-1,1)$. By symmetry it suffices to check all the cases where $p\geq1$, so we need to check that there is no solution for $p \in [1,1+\frac{q}{c}]$, the end-point $1+\frac{q}{c}$ coming from the condition that $Q$ is convex. 

For this one can check that the general solution to $A_1 = A_2$ giving $r_3$ and $r_4$ in terms of $r_1$ and $r_2$ is affine linear in $p$. In particular, if $r_3$ is negative for $p =1$ and for $p=1+ \frac{q}{c}$ whenever $r_1$ and $r_2$ are positive, then this holds for all $p \in  [1,1+\frac{q}{c}]$ and we are done. 

This is indeed the case as one can check that at $p =1$, the solution is
\begin{align*} r_3 = - \frac{q(r_1+r_2)}{c},
\end{align*}
and at $p=1 + \frac{q}{c}$, the solution is
\begin{align*} r_3 = -\frac{r_2 q}{c},
\end{align*}
both of which are negative.
\end{proof}

We can now prove proposition \ref{adjsidesstab}.
\begin{proof} If there were no such weights, then there would have to exist a strictly semistable $(	Q,d\sigma)$ with unique destabilising simple piecewise linear function given by a diagonal of $Q$. Indeed, there would certainly have to be one such boundary with crease going through one vertex of $Q$. If this was the case and this was not a diagonal, then $(Q,d\sigma)$ would admit preferred ambitoric coordinates. But then the simple piecewise linear functions with crease the ambitoric coordinate lines would have positive Donaldson-Futaki invariant. In particular, the formal solution $H_{A,B}$ would be positive-definite and so $(Q, d\sigma)$ would be stable, a contradiction.

If there was such a strictly semistable polytope whose unique destabilising function had crease a diagonal, it would follow from an argument similar to one given in \cite{Don02}, that the corresponding weighted subpolytopes $(\Delta_i, d \tau_i)$ would then have equal associated affine linear function $A_i$. But this violates lemma \ref{subtrilem}.
\end{proof}

Finally, we consider the case when the boundary measure is supported on opposite sides.

\begin{prop}\label{oppsidesstab} Let $Q$ be a quadrilateral and fix two opposite sides $E$ and $F$. Then there exists a boundary measure $d\sigma$ supported on $E$ and $F$ such that $(Q, d\sigma)$ is stable if and only if $E$ and $F$ are not parallel.
\end{prop}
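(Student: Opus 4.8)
The plan is to prove the two directions separately: the ``only if'' direction is immediate from a result of Sz\'ekelyhidi, and for the ``if'' direction I would follow the template of the proof of Proposition \ref{adjsidesstab}, using the triangle-splitting Lemma \ref{subtrilem}.

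\emph{Parallel case.} If $E$ and $F$ are parallel, then $Q$ is a trapezium whose pair of parallel sides is $\{E,F\}$, so any boundary measure supported on $E\cup F$ is supported on the two parallel sides. By \cite[Prop.~15]{szekelyhidi08} (quoted above), $(Q,d\sigma)$ is then strictly semistable, hence not stable; so no stable weight supported on $E\cup F$ exists.

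\emph{Non-parallel case.} Assume $E\not\parallel F$, write $d\sigma_r=(1-r)E+rF$ for $r\in[0,1]$, and suppose for contradiction that $(Q,d\sigma_r)$ is unstable for all $r$. Arguing as in the proof of Proposition \ref{adjsidesstab}, one first shows there is then a strictly semistable $(Q,d\sigma)$ with $d\sigma$ supported on $E\cup F$ and with a \emph{unique} normalised destabilising simple piecewise linear function $f$, whose crease passes through a vertex of $Q$. This crease is either a diagonal of $Q$ or else meets two opposite edges of $Q$ with at least one interior intersection point. In the latter case the diagonals are non-destabilising, so $\phi(1,0),\phi(0,1)>0$ and, $E\not\parallel F$ ruling out an equipoised trapezium (the single exceptional value of $r$, which occurs only when the other pair of sides of $Q$ is parallel, is handled by Legendre's construction just as in the discussion preceding Proposition \ref{adjsidesstab}, the analogues of \ref{stabandformsoleq} and \ref{splACG} being valid there), Lemma \ref{prefcoordslem} (equivalently Lemma \ref{orthoglem}) gives preferred ambitoric coordinates for $(Q,d\sigma)$. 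But then by equation \ref{splACG} the Donaldson--Futaki invariant of each ambitoric coordinate-line crease is a non-negative multiple of $A(\alpha)$ or $B(\beta)$; a non-positive value would force either an instability or a destabiliser distinct from $f$, so all of these are positive, $A$ and $B$ are positive on their interiors, $H_{A,B}$ is positive-definite, and $(Q,d\sigma)$ is stable by Proposition \ref{stabandformsoleq} --- contradicting strict semistability. Hence the crease of $f$ is a diagonal $d$ of $Q$.

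Granting this, I would finish with Donaldson's splitting argument (as in \cite{Don02} and the proof of Proposition \ref{adjsidesstab}): cutting $Q$ along $d$ into triangles $\Delta_1,\Delta_2$, equipped with the weights $d\tau_i$ equal to $d\sigma$ on the sides inherited from $Q$ and vanishing on $d$, the identity $\mathcal{L}_{d\sigma}(f)=0$ together with uniqueness of $d$ as a destabiliser forces the affine functions associated to $(\Delta_1,d\tau_1)$ and $(\Delta_2,d\tau_2)$ to coincide --- contradicting Lemma \ref{subtrilem}. So some $d\sigma_r$ with $r\in(0,1)$ is stable. I expect the genuine difficulty to lie in the very first step of the non-parallel case: producing the strictly semistable weight whose (unique) destabiliser passes through a vertex, and ruling out the non-diagonal possibility; the parallel case and the final contradiction with Lemma \ref{subtrilem} should be routine given the earlier results. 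A more computational route would instead use Proposition \ref{oppsidesprop} directly and check, in the coordinates of Section \ref{genprops}, that the determinant of the Hessian of $\phi_r$ at the vertex $(0,0)$ corresponding to a crease along an edge where $d\sigma$ vanishes --- which is quadratic in $r$ --- attains a positive value for some $r\in(0,1)$ precisely when $E$ and $F$ are not parallel.
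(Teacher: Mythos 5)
Your ``only if'' direction is fine and matches the paper. The main route you propose for the ``if'' direction, however, has a genuine gap at its first and central step: the reduction to a strictly semistable weight whose unique destabilising simple piecewise linear function has crease a diagonal of $Q$. This reduction is borrowed from the adjacent-sides argument of Proposition \ref{adjsidesstab}, but it fails for opposite sides. The paper's own analysis (proof of Proposition \ref{oppsidesprop}, and the ``not stable at $r_0$ and $r_1$'' clause of Theorem \ref{mainquadthm}) shows that for weights supported on two opposite edges the borderline semistable weights have destabilisers given by the \emph{entire one-parameter family} of creases $x=\alpha$, $\alpha\in[\alpha_0,\alpha_\infty]$: these are interior ambitoric coordinate lines, they pass through no vertex, and they are far from unique. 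So your dichotomy ``the crease is a diagonal, or else preferred ambitoric coordinates force stability'' breaks down --- a destabiliser whose crease is an interior coordinate line simply gives $A(\alpha)=0$ and contradicts nothing. A symptom of the same problem is that your contradiction argument never uses $E\not\parallel F$ at the decisive step (you invoke it only to rule out an equipoised trapezium, which concerns the \emph{other} pair of sides), so if it were sound it would also produce a stable weight supported on two parallel sides, contradicting Sz\'ekelyhidi's example. There is also a smaller issue at the outset: from ``all $d\sigma_r$ are unstable'' one cannot directly extract a strictly semistable weight supported on $E\cup F$, since the semistable set on that edge of the simplex could a priori be empty; any limiting argument has to take place in the full weight simplex.

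The computational alternative you mention in your final sentence is in fact the paper's actual proof, but it is not a routine verification and you have not carried it out. The paper computes the determinant of the Hessian of $\phi_r$ at $(0,0)$ as a quadratic in $r$, evaluates it at its critical point to obtain the closed form $p^{2}k^{4}q\left(kp+k+q\right)^{2}/(4\rho_1\rho_2)$, proves $\rho_1,\rho_2>0$ for every convex quadrilateral by locating the roots of two explicit quadratics in $p$, observes that the numerator vanishes exactly when $p=0$ (the parallel case), and then separately checks that the derivative of the determinant at $r=0$ is positive --- another explicit polynomial positivity argument --- to ensure the critical point actually lies in $(0,1)$. These two positivity verifications are where the real content of the proposition lives; without them the claim that the determinant is positive for some $r\in(0,1)$ precisely when $E\not\parallel F$ is an assertion rather than a proof.
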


We already know one direction of this proposition due to Sz\'ekelyhidi's example, so to prove \ref{oppsidesstab}, we thus have to show that if $E$ and $F$ are opposite sides that are not parallel, then there exists a stable weight. However, it will also be transparent in the proof that both directions are true. 

\begin{proof} We use the determinant condition of proposition \ref{adjsidesprop}. This determinant is a quadratic in $r$. One can show that at the critical point of this quadratic, the value is
\begin{align*} \frac {{p}^{2}{k}^{4}q \left( kp+k+q \right) ^{2}}{4 \rho_1 \rho_2},
\end{align*}
where
\begin{align*} \rho_1 &= {k}^{2}{p}^{2}+2\,{k}^{2}p+2\,kpq+{k}^{2}+2\,kp+2\,kq+{q}^{2}+2\,k, \\
\rho_2 &=  {k}^{2}{p}^{2}+2\,{k}^{2}pq+2\,{k}^{2}p+2\,kpq+2\,k{q}^{2}+{k}^{2}+2\,kq+{q}^{2} .
\end{align*}
Here we are using the formulae given for the quadrilateral $Q$ and the boundary measure as in section \ref{genprops}.

The numerator of this is always positive unless $p=0$, which is the case when $E$ and $F$ are parallel. Note that $kp + k + q \neq 0$. It is in fact positive, since $p>-1$ and $q>0$. In the case of the denominator, we consider each factor separately. These are both quadratics in $p$, so it suffices to show that there are no zeros of these quadratics for the allowed values of $p$, and that at some point they are positive.

For $\rho_1$ the roots are
\begin{align*} p &={\frac {-k-q-1 \pm \sqrt {2\,q+1}}{k}}.
\end{align*}
Since $k>0$, the larger of these roots is the one taking the positive sign, and so we must show that such a root is smaller than either $-1$ or $-\frac{q}{k}$. But if
\begin{align*} {\frac {-k-q-1+\sqrt {2\,q+1}}{k}} &>-1,
\end{align*}
then  
\begin{align*}\sqrt{2q +1} > q + 1.
\end{align*}
Since both sides are greater than $0$, this inequality is preserved when squaring, and so this implies 
\begin{align*} q^2 < 0,
\end{align*}
a contradiction. Thus all roots of the first factor satisfy that $p<-1$, hence it is positive for any convex quadrilateral.

For $\rho_2$, the roots are
\begin{align*} p &= \frac {-kq-k-q \pm \sqrt {{k}^{2}{q}^{2}+2\,{k}^{2}q}}{k}.
\end{align*}
The greater of these is again taking the positive sign, and if 
\begin{align*} \frac {-kq-k-q + \sqrt {{k}^{2}{q}^{2}+2\,{k}^{2}q}}{k} > - \frac{q}{k},
\end{align*}
then 
\begin{align*} \sqrt {{k}^{2}{q}^{2}+2\,{k}^{2}q} > kq + k.
\end{align*}
Squaring, this would imply that
\begin{align*} k^2 < 0,
\end{align*}
again a contradition. Thus both terms are positive whenever $p> \textnormal{max} \{ - \frac{q}{k}, -1 \}$.

We have shown that the critical weight is a stable weight unless $E$ and $F$ are parallel. What remains is to show that the critical weight is a valid weight, i.e. lies in $(0,1)$. Since the determinant condition is violated at both $r=0$ and $r=1$, it therefore suffices to show that the determinant increases at $r=0$ to conclude that the critical $r$ must lie in $(0,1)$. 

A computation shows that the derivative of the determinant at $r=0$ being positive is equivalent to 
\begin{align}\label{positivitycheck}& \nonumber{k}^{4}{p}^{4}+4\,{k}^{4}{p}^{3}+4\,{k}^{3}{p}^{3}q+6\,{k}^{4}{p}^{2}+4\,{k}^{3}{p}^{3}+16\,{k}^{3}{p}^{2}q+6\,{k}^{2}{p}^{2}{q}^{2}+4\,{k}^{4}p \\
&+12\,{k}^{3}{p}^{2} +16\,{k}^{3}pq+8\,{k}^{2}{p}^{2}q+16\,{k}^{2}p{q}^{2}+4\,kp{q}^{3}+{k}^{4}+12\,{k}^{3}p+4\,{k}^{3}q\\
&\nonumber+16\,{k}^{2}pq +10\,{k}^{2}{q}^{2}+4\,kp{q}^{2}+4\,k{q}^{3}+{q}^{4}+4\,{k}^{3}+8\,{k}^{2}q+4\,k{q}^{2}
\end{align}
being positive. Now, if $q\geq k$, then $p > -1$. In this case, we make the substitution $p = -1 + a$ above, so $a>0$. We then have that \ref{positivitycheck} becomes
\begin{align*}{a}^{4}{k}^{4}+4\,{a}^{3}{k}^{3}q+4\,{a}^{3}{k}^{3}+4\,{a}^{2}{k}^{3}q+6\,{a}^{2}{k}^{2}{q}^{2}+8\,{a}^{2}{k}^{2}q-4\,{k}^{3}qa+4\,{k}^{2}{q}^{2}a+4\,k{q}^{3}a+4\,k{q}^{2}a+{q}^{4}.
\end{align*}
Since $a,k$ and $q$ are positive, the only negative term above is $-4 k^3qa$. However, since we are assuming $q \geq k$, this is dominated by the term $+ 4kq^3a$. Hence this is always positive for all $a,k,q>0$.

In the case when $q \leq k$, one can use the substitution $p = -\frac{q}{k} + a$ instead and use a similar argument to obtain the same conclusion. Thus the derivative of the determinant is positive at $r=0$, and this completes the proof.
\end{proof}

From the above results we thus get the following characterisation of the stable weights for a quadrilateral, which is simply theorem \ref{mainquadthm} with the numbers in the statement explicitly given. Recall from section \ref{genprops} that fixing two edges $E_i$ and $E_j$ we have two associated polynomials $\phi$ and $\psi$.

\begin{cor}\label{detcondcor} Let $Q$ be a quadrilateral and fix two edges $E_i, E_j$ of $Q$. Let $c_0,c_1$ denote weights $(1-r)E_i + rE_j$ for which the determinant of $\phi$ vanishes at $(0,0)$ and similarly define $c_2,c_3$ for $\psi$. Then the weights of this form which are stable weights for $Q$ are precisely given by
\begin{itemize} \item the intersection of $[c_0,c_1]$ and $[c_2,c_3]$ with $(0,1)$ if $E_i$ and $E_j$ are adjacent,
\item $(c_0,c_1) \cap (0,1)$ if $E_i$ and $E_j$ are opposite.
\end{itemize}
This is always non-empty unless $E_i$ and $E_j$ are parallel edges, and in this case all such weights are unstable.
\end{cor}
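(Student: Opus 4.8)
The plan is to assemble Corollary \ref{detcondcor} directly from the propositions just established, treating the three cases (adjacent edges, opposite non-parallel edges, parallel edges) separately. The key observation that makes everything cohere is Lemma \ref{critpts0bdry}: since the boundary measure $d\sigma_r = (1-r)E_i + rE_j$ vanishes along the two remaining edges of $Q$, the points of $[0,1]\times[0,1]$ corresponding to simple piecewise linear functions with crease along those edges are critical points of $\phi$ and $\psi$. At a critical point of a polynomial of two variables, positive semi-definiteness of the Hessian is detected (up to the borderline degenerate case) by the sign of the determinant together with the sign of one diagonal entry; and the relevant determinant is quadratic in $r$ by the lemma preceding Proposition \ref{adjsidesprop}. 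So the locus where the Hessian condition holds is the intersection of the $r$-intervals cut out by nonnegativity of these two quadratics. One should record that $c_0,c_1$ (resp.\ $c_2,c_3$) are precisely the roots of these quadratics, so that $[c_0,c_1]$ (resp.\ $[c_2,c_3]$) is the set where the determinant of $\phi$ (resp.\ $\psi$) at the relevant vertex is nonnegative --- with the convention that if the quadratic has no real roots, or is nonnegative nowhere on $[0,1]$, the interval is taken to be empty, which is exactly the parallel case.

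For the \textbf{adjacent} case, I would simply invoke Proposition \ref{adjsidesprop}: for $r\in(0,1)$, $(Q,d\sigma_r)$ is stable iff the Hessians of $\phi$ and $\psi$ are positive semi-definite at the relevant vertices. Translating positive semi-definiteness into sign conditions on the determinants (the diagonal-entry sign being automatically correct --- this is where one uses that $\phi(1,0),\phi(0,1)>0$ for weights in the relevant region, as in Lemma \ref{prefcoordslem}, or one checks the single second derivative $A''(\alpha_0)\ge 0$ appearing in the proof of \ref{adjsidesprop}), the stable set is $[c_0,c_1]\cap[c_2,c_3]\cap(0,1)$. One must be slightly careful at the genuinely semi-definite-but-not-definite borderline, but Proposition \ref{adjsidesprop} already handles $r\neq 0,1$ uniformly via nonnegativity of the determinant, so no extra work is needed. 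For the \textbf{opposite non-parallel} case, I would invoke Proposition \ref{oppsidesprop} (together with the equipoised-trapezium discussion and Legendre's construction that patch the excluded value $r'$): $(Q,d\sigma_r)$ is stable iff the Hessian of $\phi$ is \emph{positive-definite} at $(0,0)$, i.e.\ iff the relevant determinant is \emph{strictly} positive, giving the open interval $(c_0,c_1)\cap(0,1)$. Lemma \ref{oppsideslem} is what justifies only needing $\phi$ and not $\psi$ here.

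For \textbf{non-emptiness}, I would cite Propositions \ref{adjsidesstab} and \ref{oppsidesstab}: the former gives a stable weight for any pair of adjacent edges, the latter gives one for any pair of opposite non-parallel edges and shows there is none when $E_i,E_j$ are parallel. In the parallel case one invokes Sz\'ekelyhidi's Proposition (every boundary measure supported on two parallel edges is strictly semistable, hence not stable), so the stable set is empty and every such weight is unstable; this matches the convention that the defining intervals are empty. I expect the only real subtlety --- and hence the ``main obstacle'' --- to be bookkeeping rather than mathematics: one must (i) verify that the two diagonal entries of the Hessians are correctly signed wherever the determinant is nonnegative, so that ``determinant $\ge 0$ at a critical point'' really is equivalent to ``positive semi-definite,'' and (ii) pin down the borderline semi-definite case and the exceptional value $r'$ in the opposite case, both of which are dispatched by the earlier propositions but need to be explicitly cross-referenced. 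Everything else is a direct concatenation of \ref{adjsidesprop}, \ref{oppsidesprop}, \ref{adjsidesstab}, \ref{oppsidesstab}, and Sz\'ekelyhidi's example.
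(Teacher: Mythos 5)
Your proposal is correct and matches the paper's own treatment: the corollary is presented there as a direct assembly of Propositions \ref{adjsidesprop}, \ref{oppsidesprop}, \ref{adjsidesstab}, \ref{oppsidesstab}, the equipoised-trapezium/Legendre discussion, and Sz\'ekelyhidi's example, with the determinant-versus-semi-definiteness bookkeeping handled exactly as you indicate (via the critical-point lemma and the sign of $A''(\alpha_0)$). No gaps.
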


It follows from the examples of unstable positive weights for quadrilaterals with no parallel sides in \cite[Prop. 6]{ACG15} that for a quadrilateral with no parallel sides, and with boundary measure $d\sigma$ supported on one edge only, $(Q,d\sigma)$ is unstable and not strictly semistable, i.e. $\mathcal{L} (f) < 0$ for some simple piecewise linear function $f$.  From this and our characterisation of the stable set along edges of the simplex $\sum_i r_i = 1$, it follows that given two adjacent edges $E$ and $F$ on such a quadrilateral, the $r$ such that $r E + (1-r) F$ is stable is a \textit{closed} non-empty interval, contained in $(0,1)$. Thus we get the following corollary.

\begin{cor} Let $Q$ be a quadrilateral without parallel sides. Then the set of weights $d\sigma$ for which $(Q, d\sigma)$ is stable, identified with a subset of $\mathbb{R}^{4}_{\geq 0} \setminus \{ 0 \}$, is neither open nor closed.
\end{cor}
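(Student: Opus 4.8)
The plan is to deduce the corollary directly from Theorem~\ref{mainquadthm} (equivalently Corollary~\ref{detcondcor}) together with the instability of single-edge measures recorded just above: by \cite[Prop. 6]{ACG15}, for a quadrilateral without parallel sides any $d\sigma$ supported on one edge is unstable. I would work in the identification of section~\ref{genprops}, under which a weight is the vector $(r_1,\dots,r_4)\in\mathbb{R}^4_{\geq 0}\setminus\{0\}$ attached to the formal sum $\sum_i r_i E_i$, and exhibit (i) a stable weight that is not an interior point of the stable set, so the set is not open, and (ii) an unstable weight lying in the closure of the stable set, so the set is not closed. Since stability is scaling-invariant, it suffices to test matters along the one-parameter segments $r\mapsto(1-r)E_i+rE_j$ appearing in Theorem~\ref{mainquadthm}.

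For non-openness I would take two \emph{adjacent} edges $E_i,E_j$. By Theorem~\ref{mainquadthm} the set of $r\in[0,1]$ with $(1-r)E_i+rE_j$ stable is a closed interval $[r_0,r_1]$; since $r=0$ and $r=1$ give single-edge measures, which are unstable, we get $0<r_0\leq r_1<1$, so the exceptional clause of the theorem does not apply at $r_0$ and the weight $\underline{r}_0:=(1-r_0)E_i+r_0 E_j$ is stable. On the other hand $(1-r)E_i+rE_j$ is unstable for every $r\in[0,r_0)$, and these weights accumulate at $\underline{r}_0$ inside $\mathbb{R}^4_{\geq 0}\setminus\{0\}$; hence $\underline{r}_0$ is a stable weight that is not an interior point of the stable set. (Equivalently, the intersection of any open subset of $\mathbb{R}^4_{\geq 0}\setminus\{0\}$ with the segment $\{(1-r)E_i+rE_j:r\in[0,1]\}$ is relatively open in that segment, whereas $[r_0,r_1]$ with $0<r_0\leq r_1<1$ is not.)

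For non-closedness I would instead take two \emph{opposite} edges $E_i,E_j$, which are non-parallel since $Q$ has no parallel sides. Theorem~\ref{mainquadthm} then gives that the stable locus on $\{(1-r)E_i+rE_j:r\in[0,1]\}$ is the nonempty open interval $(r_0,r_1)$, while the endpoint weight $\underline{r}^\ast:=(1-r_0)E_i+r_0 E_j$ is \emph{not} stable. But $\underline{r}^\ast=\lim_{r\to r_0^+}\bigl((1-r)E_i+rE_j\bigr)$ is a limit of stable weights, so it lies in the closure of the stable set without belonging to it, and the stable set is not closed. Together with the previous paragraph, this proves the corollary.

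I do not expect a serious obstacle here: once Theorem~\ref{mainquadthm} and the instability of single-edge measures are in hand, the corollary is a short point-set argument. The one point that needs attention is the strict inequality $r_0>0$ in the adjacent case — it is what places the boundary point $\underline{r}_0$ under the ``stable at $r_0$'' conclusion of the theorem rather than under its exceptional clause — and verifying it is exactly where the instability of measures supported on a single edge of a non-parallel-sided quadrilateral is invoked.
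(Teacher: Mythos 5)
Your proposal is correct and follows essentially the same route as the paper: the paper likewise deduces non-openness from the fact that for adjacent edges the stable locus on the segment is a closed non-empty interval contained in $(0,1)$ (using the instability of single-edge measures from \cite[Prop. 6]{ACG15}), and non-closedness from the opposite-edge case of Theorem \ref{mainquadthm}, where the stable locus is a non-empty open interval whose endpoints are unstable. Your write-up just makes explicit the short point-set steps the paper leaves implicit.
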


This is surprising. When looking at positive weights the set of weights in $\mathbb{R}^4_{>0}$ for which $(Q, d\sigma)$ is stable \textit{is} open. Indeed, in Donaldson's continuity method for extremal metrics on toric surfaces in \cite{donaldson08}, he in particular showed that for any polytope with positive weights, the set of weights which admits an extremal potential is open. The openness of the stable set then follows as we will show in the next section that when $d \sigma$ is positive on each edge of $Q$ and $(Q, d\sigma)$ is stable, then the formal solution is the inverse Hessian of a symplectic potential. In the next section we will also discuss how the points on the boundary of the stable region can be explained by the formation of metrics with different sort of asymptotics than the Poincar\'e type metrics.

\begin{rem} If the optimal destabiliser of an unstable polytope is a simple piecewise linear function, then the polytope splits into two semistable pieces, and these pieces have vanishing boundary measure on only one edge. It is therefore interesting to see if this sort of behaviour can happen when one allows only one edge with zero boundary measure. The answer to this is \textnormal{yes}. This follows from the above and the convexity of the stable set. 

Pick an edge $E$ of $Q$ and let the two edges adjacent to it be $F_1, F_2$. Then there exists minimal $r_1,r_2$ such that $d\sigma_1 = (1-r_1) E + r_1 F_1$ and $d\sigma_2 = (1-r_2) E + r_2 F_2$ are stable weights, respectively. It follows that all convex combinations $(1-r) d\sigma_1 + r d\sigma_2$ of these two weights are stable, and these vanish only on the edge opposite $E$ when $r \in (0,1)$.
\end{rem}


\section{Relation to the existence of extremal metrics}\label{relstometssection}

In the previous section, we showed how the existence of a positive-definite formal solution is equivalent to stability. In this section we will show that if the formal solutions are positive-definite then the $H^{ij}$ is the inverse Hessian of a symplectic potential. These  (generically) correspond to Poincar\'e type metrics in a weak sense on the edges with weight $0$, however we show that a different behaviour occurs too. This will explain the non-openness of the stable set in the previous section. This corresponds to symplectic potential having the behaviour $u^{11} = O(x^3)$ near an edge lying in $x =0$. This in turn correspond to the metric being modelled on 
\begin{align*} \frac{|dz|^2}{|z|^2 (- \log(|z|) )^{\frac{3}{2}}}
\end{align*}
near the divisor corresponding to this edge. 

The section has three parts. First we consider edges where the boundary measure is positive, which is the case consider in \cite{ACG15}. Next, we take the case when the $A,B$ have exactly double roots on edges where the boundary measure vanishes, and finally we consider the case of triple root. We emphasise that in this section we consider all metrics coming from the ACG construction. In other words, $A,B$ can be arbitrary positive functions on $(\alpha_0,\alpha_{\infty})$ and $(\beta_0, \beta_{\infty})$, respectively, satisfying the boundary conditions \ref{boundcons1} and \ref{boundcons2}, not necessarily extremal potentials.

At edges with non-vanishing boundary measure, we get metrics with cone angles determined by $d\sigma$. Below we let $F$ be the union of the edges where $d\sigma$ vanishes.

\begin{lem}\label{ACGconeanglelem} Let $(Q, d\sigma)$ be a weighted quadrilateral admitting positive or negative ambitoric coordinates defined on $[\alpha_0, \alpha_{\infty}] \times [\beta_0 , \beta_{\infty}]$. Let $A$ and $B$ be positive on $(\alpha_0, \alpha_{\infty})$ and $(\beta_0, \beta_{\infty})$, respectively, satisfying \ref{boundcons1} and \ref{boundcons2}, but which do not have to be quartics. Let $H_{A,B}$ be the corresponding positive-definite map $Q \rightarrow S^2 \mathfrak{t}^*$. Then $H_{A,B}$ is the inverse Hessian of a symplectic potential on $Q^{\circ}$ which satisfies the Guillemin boundary conditions at each edge with non-zero boundary measure. In particular, the $H_{A,B}$ equals the inverse Hessian of a function $u$ which can be written as 
\begin{align*} \frac{1}{2} \sum_{\{i : d\sigma_{| E_i} \neq 0 \} } l_i \log l_i + h,
\end{align*}
where $h \in C^{\infty} (P^{\circ}) \cap C^{0} ( P \setminus  F )$ and $l_i$ is the affine linear function defining $l_i$ determined by $d\sigma$.
\end{lem}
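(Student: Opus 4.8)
The plan is to recognise $H_{A,B}$ as the inverse Hessian of an explicit symplectic potential obtained by Legendre transform from the ACG K\"ahler potential, and then to check the Guillemin boundary behaviour edge by edge. First I would recall from \cite{ACG15} that in ambitoric coordinates $(x,y)$ the K\"ahler metric $g_{\pm}$ is governed by the two one--variable ODE's for $A$ and $B$, and that the momentum map in the torus coordinates $(\tau_0,\tau_1,\tau_2)$ is given by rational functions of $x,y$ (the entries of $H_{\pm}$ projected to $S^2\mathfrak t^*$). Pulling back by $\mu^{\pm}$ identifies $(x,y)$ with momentum coordinates on $Q^{\circ}$, and the matrix $H_{A,B}$ is by construction the push--forward of the fibre metric; the point is that this matrix is positive--definite on $Q^{\circ}$ precisely because $A>0$ on $(\alpha_0,\alpha_\infty)$ and $B>0$ on $(\beta_0,\beta_\infty)$, which is our hypothesis. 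So $H_{A,B}$ is a positive--definite symmetric matrix--valued function on $Q^{\circ}$ which, by Lemma \ref{intbypartslem}, satisfies the correct boundary conditions for the measure $d\sigma$.

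Next I would invoke the standard fact (Abreu, and used throughout \cite{ACG15}) that a positive--definite $H^{ij}$ on $Q^{\circ}$ satisfying the first--derivative identity $\partial_j H^{ij}$ consistent with a symmetric second--derivative structure --- more precisely, such that $H^{ij}$ arises as the fibre metric of an honest toric K\"ahler metric, which is the case here since $g_{\pm}$ is a genuine K\"ahler metric on $D^{\circ}\times\mathfrak t$ --- is automatically the inverse of the Hessian of a convex function $u$ on $Q^{\circ}$, unique up to addition of an affine function. Concretely, $u$ is the Legendre transform of the K\"ahler potential of $g_{\pm}$; one can also produce $u$ directly by integrating the $1$--form $H_{ij}\,dx^j$ (the inverse matrix applied to coordinate differentials), whose closedness follows from the symmetry of the Hessian of the K\"ahler potential. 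This gives the existence of $u$ with $\mathrm{Hess}(u)^{-1}=H_{A,B}$.

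Then comes the boundary analysis, which is the main obstacle. On an edge $E_i$ with $d\sigma_{|E_i}\neq 0$, say $E_i$ is cut out by $l_i$, the condition $dH(u_i,u_i)=r_i u_i$ with $r_i>0$ means $H^{ij}$ degenerates to first order transverse to $E_i$ in exactly the Guillemin way; the standard argument (as in Abreu's treatment of Guillemin boundary conditions, or \cite[Prop. 2.1]{datarguosongwang13}) then shows $u-\tfrac12 l_i\log l_i$ extends smoothly across the interior of $E_i$. Iterating at vertices where two such edges meet gives that $u-\tfrac12\sum_{i:d\sigma_{|E_i}\neq0} l_i\log l_i$ is smooth near all such edges and vertices; away from $F$ this patches to a function $h\in C^\infty(P^\circ)\cap C^0(P\setminus F)$. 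The delicate point is controlling $u$ near $F$ (the edges with $r_i=0$) and near the vertices where an edge of $F$ meets a non--$F$ edge: there $H^{ij}$ either does not degenerate at all (double--root case) or degenerates differently, so one only gets $C^0$ up to $P\setminus F$ rather than smoothness, and one must check the logarithmic terms from the $F$--edges do not interfere --- this is done by direct inspection of the ACG formulae for $H_{\pm}$ near the relevant boundary pieces, which is routine but unavoidable. Putting these together yields the claimed decomposition of $u$ and the fact that it satisfies the Guillemin conditions at every edge with non--zero boundary measure.
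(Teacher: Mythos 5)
Your proposal is essentially correct but follows a genuinely different route from the paper. You work directly with $H_{A,B}$: positivity of $A,B$ gives positive-definiteness, the fact that $g_{\pm}$ is an honest toric K\"ahler metric on $D^{\circ}\times\mathfrak{t}$ gives the existence of a convex $u$ with $\mathrm{Hess}(u)^{-1}=H_{A,B}$, and you then invoke the characterisation of Guillemin-type boundary behaviour in terms of the first-order conditions on $H$ ($H(u_i,\cdot)=0$ and $dH(u_i,u_i)=r_iu_i$ on $F_i$) to extract the decomposition $u=\tfrac12\sum l_i\log l_i+h$. The paper instead argues edge by edge: for each edge $E$ with $d\sigma_{|E}\neq0$ it applies a linear change of basis, not necessarily in $\mathrm{SL}_2(\mathbb{Z})$, that normalises the weight along $E$ to the standard one, observes that $Q^{\circ}\cup E^{\circ}$ with the transported ambitoric coordinates is then rational data coming from an ambitoric structure on $\mathbb{C}\times\mathbb{C}^*$, and cites the rational-weight result of \cite{ACG15} to conclude that the potential satisfies the standard boundary conditions along $E$; undoing the transformation gives the weighted Guillemin conditions. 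The paper's reduction is shorter and outsources all boundary analysis to \cite{ACG15}; your argument is closer to first principles but leans on the nontrivial equivalence between the first-order conditions on $H$ and the Guillemin form of $u$ (which also requires positive-definiteness of $H$ restricted to the tangent spaces of the facets, a point you should state explicitly --- it follows here from the derivative conditions in \ref{boundcons1}), and it defers the behaviour near $F$ to an unspecified ``direct inspection''; note that since the statement only asks for $h\in C^0(P\setminus F)$ and the vertices where an $F$-edge meets a non-$F$-edge lie in $F$, that last check is lighter than you suggest.
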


\begin{proof} The proof follows from the analogous statement for rational weights, proved in \cite{ACG15}. Indeed, by changing basis by a transformation which is not necessarily in  $\textnormal{SL}_{2}(\mathbb{Z})$, one gets that $(Q, d\sigma)$ gets mapped to a quadrilateral where the boundary measure along our given edge $E$ is the standard one. Thus we get that the open polytope $Q^{\circ} \cup E^{\circ}$ and the composition of the previous ambitoric coordinates with these transformations are ambitoric coordinates for this polytope with the new weight. Since this is rational data, it follows that it comes from an ambitoric structure on $\mathbb{C} \times \mathbb{C}^*$, and in particular by a symplectic potential satisfying the standard boundary conditions along $E$. It therefore follows that the original positive-definite matrix $H_{A,B}$ also comes from a symplectic potential satisfying the Guillemin boundary conditions along $E$ determined by $d\sigma$.
\end{proof}

Next, we consider the edges with $0$ boundary measure, where the corresponding function vanishes exactly to second order. 

\begin{prop}\label{ptacgmets} Let $H_{A,B}$ be the function associated to an ambitoric structure on a weighted quadrilateral $(Q, d\sigma)$ as in lemma \ref{ACGconeanglelem}. Suppose $A,B$ vanish exactly to second order at the points corresponding to the edges with $0$ boundary measure. Then $ u $ is quasi-isometric to a metric induced by a symplectic potential in $ \mathcal{S}_{Q, d\sigma}$ to any order.
\end{prop}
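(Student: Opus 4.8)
The plan is to compare the ambitoric function $H_{A,B}$ with the model Poincar\'e type potentials of Definition~\ref{toricptpotentialsdefn} by a careful asymptotic analysis near each edge, and to show that the two Hessians are mutually bounded (and their covariant derivatives mutually bounded) to all orders. First I would recall, from Lemma~\ref{ACGconeanglelem}, that $H_{A,B}$ is already the inverse Hessian of a genuine symplectic potential $u$ which satisfies the Guillemin boundary conditions along every edge where $d\sigma \neq 0$, so the only thing left to check is the behaviour of $u$ near the edges $E_i \subseteq F$ with vanishing boundary measure. Fix such an edge; in the ambitoric coordinates $(x,y)$ on $D = [\alpha_0,\alpha_\infty]\times[\beta_0,\beta_\infty]$ it corresponds to, say, $x = \alpha_0$, and the hypothesis is that $A$ has a double zero there, $A(x) = (x-\alpha_0)^2\,\tilde A(x)$ with $\tilde A(\alpha_0)>0$.

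Next I would extract the leading asymptotics of the metric $g_\pm$ transverse to this edge. The relevant terms are $dx^2/A(x)$ together with $A(x)\,(\cdots)^2$; since $A(x)\sim c\,(x-\alpha_0)^2$, the transverse part degenerates precisely like the model cusp metric $|dz|^2/(|z|\log|z|)^2$, up to a bounded conformal factor $((x-y)/q(x,y))^{\pm 1}$ which is smooth and positive on all of $Q$ (hence contributes only a quasi-isometry constant). Converting back through $\mu^\pm$, the coordinate $x - \alpha_0$ is comparable to the affine distance $l_i$ to the edge $E_i$, so near $E_i$ the symplectic potential $u$ behaves, in the relevant variable, like $-a_i\log l_i$ plus a function that is $C^\infty$ up to $E_i$ — this is exactly the shape of a model potential $u_{\underline a,v}$ from \eqref{modelptpotentialeqn}. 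In particular $U^{ii}\sim l_i^2$ in the normal direction, which is the defining feature of a Poincar\'e type symplectic potential.

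Having identified the model, the core of the proof is the four bulleted estimates of Definition~\ref{toricptpotentialsdefn}: strict convexity of $u$ on each non-vanishing facet (this is inherited from $H_{A,B}$ being positive-definite and from Lemma~\ref{ACGconeanglelem}); the logarithmic growth bound $|u| \leq C(-\log d_{PT})$ (immediate from $u \sim -a_i \log l_i$); the two-sided Hessian bound $c^{-1}U_{\underline a,v}\leq U\leq c\,U_{\underline a,v}$; and the mutual boundedness of all covariant derivatives of $u$ and $u_{\underline a,v}$ measured in the $u_{\underline a,v}$-metric. For the last two, I would work in the ambitoric product coordinates, where both $U$ and $U_{\underline a,v}$ diagonalise (up to the smooth conformal factor) into a transverse $dx^2/A(x)$-type block and a tangential block governed by $A(x)$ and $B(y)$; because $A$ and $B$ are smooth and vanish to exactly the same order as the corresponding factors in a model potential, the ratios of matrix entries extend smoothly and positively up to the boundary, and differentiating these smooth ratios in the Levi-Civita connection of $u_{\underline a,v}$ (which, being a product-type cusp connection, has controlled Christoffel symbols) keeps everything bounded. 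The statement "to any order'' then follows because at each stage the discrepancy between $u$ and the model is a function smooth up to $E_i$, whose derivatives are therefore bounded in the cusp metric.

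The main obstacle I anticipate is the last bullet: controlling arbitrarily high covariant derivatives $|\nabla^j u|_{u_{\underline a,v}}$ uniformly up to the Poincar\'e type edges. The subtlety is that the cusp metric has infinite volume and its Levi-Civita connection does not vanish at the boundary, so naive Euclidean estimates are not enough; one must check that differentiating the smooth-up-to-the-boundary correction terms against this connection does not produce unbounded growth. I would handle this by passing to the standard "cusp normal coordinates'' $w = -\log l_i$ (so the edge is at $w = +\infty$ and the metric is asymptotically translation-invariant in $w$), in which a function smooth in $l_i = e^{-w}$ has all its $\partial_w$-derivatives exponentially small; transferring this back shows every covariant derivative of the correction is not merely bounded but decaying, which gives the mutual boundedness with room to spare, and hence the quasi-isometry to an element of $\mathcal S_{Q,d\sigma}$ to all orders.
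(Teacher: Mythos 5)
Your proposal is correct and follows essentially the same route as the paper: both compare the Hessian of $u$ with that of the model $-a\log l$ potential in the explicit ambitoric/symplectic coordinates, using that the double zero of $A$ makes the transverse Hessian entry behave like $(x-\alpha_0)^{-2}\sim\chi^{-2}$ and that the coordinate change $(x,y)\mapsto(\chi,\eta)$ has mutually bounded derivatives. The paper's version is simply a more compressed computation --- it bounds $n$ derivatives of $q(x,y)/((x-y)A(x))$ against $(x-\alpha_0)^{-(2+n)}$ and matches this with the model's $\chi^{-(2+n)}$ directly, without your detour through cusp normal coordinates.
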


\begin{proof} We do the proof in the case of positive ambitoric coordinates. The proof in the negative case is similar.

Let $x = \alpha_0$ be an edge with $0$ boundary measure. Let the symplectic coordinates be $\chi$ and $\eta$, which turn out to be given by
\begin{align*} \chi &= \frac{(x- \alpha_0)(y - \alpha_0)}{q(x,y)} ,\\
\eta &= \frac{(\beta_0 - x)(y - \beta_0)}{q(x,y)} .
\end{align*}
 We then have that, for example
\begin{align*} \frac{\partial \chi}{\partial x} = \frac{(y-\alpha_0)(q(x,y) - (x-\alpha_0) \frac{\partial q}{\partial x} (x,y) )}{q^2(x,y)}.
\end{align*}
Recall the inequalities \ref{alphabetaincrease}, so that e.g. $y- \alpha_0$ is positive and bounded away from zero. Since $q(x,y)$ is smooth and positive in a neighbourhood of $[\alpha_0, \alpha_{\infty}] \times [\beta_0, \beta_{\infty}]$, it follows from this and similar calculations for the other entries in the Jacobian of this coordinate change that taking derivatives with respect to the $(x,y)$ and $(\chi,\eta)$ variables are mutually bounded.

Taking $n$ derivatives of the Hessian $u_{ij}$ of the symplectic potential $u$ in the $\chi$ direction is therefore mutually bounded with taking $n$ derivatives in the $x$-direction of 
\begin{align*} \frac{q(x,y)}{(x-y)A(x)} .
\end{align*}
This is in turn mutually bounded with taking $n$ derivatives of 
\begin{align*} \frac{1}{(x-\alpha_0)^2}
\end{align*}
with respect to $x$, since $A$ vanishes exactly to order $2$ at $\alpha_0$. Hence it is mutually bounded with
\begin{align*} \frac{1}{(x-\alpha_0)^{2+n}}.
\end{align*}
Taking $n$ derivatives of the Hessian of the model symplectic potential in the $\chi$-direction is mutually bounded with $\frac{1}{\chi^{2+n}}$, which in turn near $x=\alpha_0$ is mutually bounded with 
\begin{align*} \frac{1}{(x-\alpha_0)^{2+n}}
\end{align*}
as well. Thus the symplectic potential $u$ is mutually bounded with the model for derivatives to any order. 
\end{proof}

\begin{prop} Let $H_{A,B}$ be the function associated to an ambitoric structure on a weighted quadrilateral $(Q, d\sigma)$ as in lemma \ref{ACGconeanglelem}. Suppose $A$ or $B$ vanish to third order at a point corresponding to an edge $E = l^{-1} (0)$ with $0$ boundary measure. Then $u$ has the asymptotics of the model potential where one exchanges the term $- a \log ( l )$ with 
\begin{align*} \frac{a}{l},
\end{align*}
with $a > 0$.
\end{prop}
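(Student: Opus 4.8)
The plan is to adapt the quasi-isometry argument of Proposition \ref{ptacgmets} to the case where $A$ (say) vanishes to third rather than second order at the relevant endpoint. First I would work again in positive ambitoric coordinates, with $E$ corresponding to $x = \alpha_0$, and recall from the proof of Proposition \ref{ptacgmets} the explicit symplectic coordinates $\chi, \eta$ and the fact that differentiation in the $(x,y)$ variables and in the $(\chi,\eta)$ variables are mutually bounded in a neighbourhood of $[\alpha_0,\alpha_\infty]\times[\beta_0,\beta_\infty]$, since $q(x,y)$ is smooth and positive there and the various factors $y-\alpha_0$, $\beta_0-x$, etc. are bounded away from zero. The new input is that near $x=\alpha_0$ one has $\chi = \dfrac{(x-\alpha_0)(y-\alpha_0)}{q(x,y)}$, so $\chi$ is comparable to $x-\alpha_0$ with comparable derivatives; hence it suffices to track the behaviour of the Hessian entry $u_{11}$ (and its derivatives) in the $\chi$ variable.

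The key computation is that, exactly as in Proposition \ref{ptacgmets}, the relevant Hessian entry of $u$ is mutually bounded with $\dfrac{q(x,y)}{(x-y)A(x)}$ and its $x$-derivatives. Since now $A$ vanishes exactly to order $3$ at $\alpha_0$, this is mutually bounded with $\dfrac{1}{(x-\alpha_0)^3}$, and taking $n$ derivatives gives something comparable to $\dfrac{1}{(x-\alpha_0)^{3+n}}$, equivalently $\dfrac{1}{\chi^{3+n}}$. On the model side I would compute the Hessian of the proposed model potential obtained from the usual $u_{\underline a, v}$ by replacing the term $-a\log l$ with $\dfrac{a}{l}$: in the coordinate where $l$ corresponds (up to a smooth positive factor with bounded derivatives) to $\chi$, the contribution $\dfrac{a}{l}$ has second $\chi$-derivative comparable to $\dfrac{1}{\chi^3}$, and $n$ further derivatives produce $\dfrac{1}{\chi^{3+n}}$, matching the order coming from the triple root of $A$. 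The remaining directions (the $\eta$-direction and the edges with nonzero boundary measure) are handled exactly as before: along edges with positive boundary measure the Guillemin-type behaviour persists by Lemma \ref{ACGconeanglelem}, and in the $\eta$-direction the relevant factor $B(y)$ is positive and bounded below near $\alpha_0$, so those entries and derivatives are mutually bounded with the model.

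So the proof structure I would write is: (1) set up positive ambitoric coordinates with $E=\{x=\alpha_0\}$ and invoke the coordinate-change estimate from Proposition \ref{ptacgmets}; (2) observe that $\chi \sim x-\alpha_0$ with comparable derivatives near $E$; (3) compute that $u_{11}$ and its $\chi$-derivatives to order $n$ are mutually bounded with $(x-\alpha_0)^{-(3+n)}\sim \chi^{-(3+n)}$ using that $A$ has a triple zero at $\alpha_0$; (4) compute the Hessian of the candidate model potential with $-a\log l$ replaced by $a/l$ and check its entries and derivatives have the same orders; (5) note $a>0$ is forced by positive-definiteness of $H_{A,B}$ (the sign of the leading coefficient of $A$ off the double/triple zeros), and conclude; and finally remark that the negative ambitoric case is identical with the obvious changes. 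The main obstacle I anticipate is purely bookkeeping: making precise the claim that replacing $-a\log l$ by $a/l$ in the model potential still yields, after Legendre transform, a strictly convex function whose Hessian is genuinely comparable (not just formally) to $H_{A,B}$ in \emph{all} directions simultaneously near the vertices of $Q$ on $E$, where two edge behaviours meet; this requires checking the mixed entries and the cross-derivatives at the corners, which is the delicate part of the estimate but is routine given the explicit formulae of \cite{ACG15} and the mutual-boundedness of the coordinate change already established.
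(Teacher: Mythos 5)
Your proposal is correct and follows essentially the same route as the paper: the paper's proof simply states that the argument of Proposition \ref{ptacgmets} carries over verbatim, with the triple zero of $A$ producing one higher power of $\frac{1}{x-\alpha_0}$ on the ambitoric side, matched by the one higher power coming from replacing $-a\log l$ by $\frac{a}{l}$ in the model. Your expanded bookkeeping (the $\chi^{-(3+n)}$ comparison and the remark on the sign of $a$) is consistent with, and somewhat more detailed than, what the paper records.
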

\begin{proof} The proof is exactly the same as \ref{ptacgmets}. One now instead obtains one higher power of $\frac{1}{(x-\alpha_0)}$ for both the model and the symplectic potential coming from the ambitoric framework. 
\end{proof}

\begin{rem} One could consider higher order vanishing as well and obtain metrics with different asymptotics near an edge with $0$ boundary measure. However, for the purposes of extremal metrics, these are the only possibilities we have to consider. In that case the $A$ and $B$ are quartics with at least two distinct zeros, and so can at most vanish to third order at one of these zeros. Note also that a third order zero can only occur in the case when two adjacent sides have $0$ boundary measure, as otherwise both zeros of $A$ or $B$ are double zeros.
\end{rem}

The model potential $\frac{1}{x}$ on $[0, \infty)$ induces the metric
\begin{align}\label{fraccuspmets} \omega = \frac{i dz \wedge d \overline{z}}{|z|^2 (- \log (|z|^2))^{\frac{3}{2}}}
\end{align}
on the unit punctured disk via the Legendre transform. Thus if one defines a space analogous to $\mathcal{S}_{P, d\sigma}$ for which the boundary behaviour is modeled on $\frac{a}{l}$ near a facet $E$ contained in the zero set of an affine linear function $l$, one obtains by similar arguments as in the Poincar\'e type case a metric with the behaviour of $\omega$ near the divisor corresponding to $E$. In the case when there are several facets with $0$ boundary measure, one can define spaces where one chooses either this or the Poincar\'e type behaviour on each such facet to get metrics with mixed cone singularities, Poincar\'e type and the behaviour of \ref{fraccuspmets} along torus invariant divisors.

Applying the results of the previous section together with this immediately gives the following result regarding extremal metrics. Below we will let $K$ be the Hessian of the function computing the Donaldson-Futaki invariant of simple piecewise linear functions with crease meeting two adjacent edges to an edge $E$ with $0$ boundary measure. 

\begin{cor}\label{metricsdescription} Suppose $(Q, d\sigma)$ is a stable weighted Delzant quadrilateral. Then $X_Q$ admits an extremal metric in $\Omega_Q$ on the complement of the torus invariant divisors corresponding to edges with $0$ weight and with cone angle singularities along the torus invariant divisors corresponding to edges with positive weight, the cone angle being prescribed by $d\sigma$. 

If $K$ is positive-definite at the point corresponding to an affine linear function with zero set containing an edge $E$ along which the boundary measure vanishes, then the metric is quasi-isometric to any order to a metric with Poincar\'e type singularities along the torus invariant divisor corresponding to $E$, whereas if $K$ is strictly positive semi-definite at this point, then the singularity along the corresponding divisor is modeled on
\begin{align*} \frac{i dz \wedge d \overline{z}}{|z|^2 (- \log (|z|^2))^{\frac{3}{2}}}.
\end{align*}
\end{cor}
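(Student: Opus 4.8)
The plan is to assemble results already in place. Throughout I would work in the preferred ambitoric coordinates for $(Q,d\sigma)$, which exist by Lemma~\ref{orthoglem} once one notes that stability forces the Donaldson--Futaki invariants of the two simple piecewise linear functions with crease a diagonal of $Q$ to be strictly positive; if $(Q,d\sigma)$ is an equipoised trapezium I would instead use Legendre's Calabi-type construction of \cite{elegendre11}, to which (as remarked in Section~\ref{stabinv}) every argument below applies verbatim, so only the ACG case is discussed.

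The first paragraph of the statement follows directly from Section~\ref{stabinv}. By Proposition~\ref{stabandformsoleq}, stability of $(Q,d\sigma)$ is equivalent to $A$ and $B$ being positive on $(\alpha_0,\alpha_\infty)$ and $(\beta_0,\beta_\infty)$, so the formal solution $H_{A,B}$ is positive-definite on $Q^\circ$; by Lemma~\ref{ACGconeanglelem} it is then the inverse Hessian of a symplectic potential $u$ on $Q^\circ$ of the form $u=\frac{1}{2}\sum_{i\,:\,d\sigma_{|E_i}\neq 0} l_i\log l_i + h$ with $h\in C^\infty(Q^\circ)\cap C^0(Q\setminus F)$, satisfying the Guillemin boundary conditions along the edges of positive measure. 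Via the Legendre transform $u$ induces a $T^2$-invariant K\"ahler form in $\Omega_Q$ on $X_Q$ minus the divisors of vanishing weight, and, by the description of such potentials recalled in Section~\ref{toricpt} (cf.~\cite{datarguosongwang13}), this form has cone-angle singularities along $D_i$ whenever the corresponding weight $r_i$ is positive, the angle being that prescribed by $d\sigma$. Finally, since the preferred coordinates are precisely those in which $\pi$ is orthogonal to $q$, the trace $H^{ij}_{\,ij}$ is affine, so by Abreu's formula \cite{abreu98} the induced metric has affine scalar curvature and is therefore extremal.

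The substance is the second assertion, which I would obtain from a dictionary between the order of vanishing of the ambitoric quartics and the definiteness type of $K$. Fix an edge $E$ with $d\sigma_{|E}=0$; in the preferred coordinates it is, say, $\{x=\alpha_0\}$, the remaining cases being identical with $B,\psi$ in place of $A,\phi$. The boundary conditions \ref{boundcons1} force $A(\alpha_0)=A'(\alpha_0)=0$, so $A$ vanishes to order $2$ or $3$ at $\alpha_0$ (order $\geq 4$ is impossible as $A$ is a quartic also vanishing at $\alpha_\infty$), and order $3$ forces a simple zero at $\alpha_\infty$, i.e.\ positive measure on the opposite edge, so order $3$ requires a zero-measure neighbour of $E$, as in the Remark above. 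The point $(0,0)$ at which $K$ is taken corresponds to the crease being $E$ itself, and by Lemma~\ref{critpts0bdry} it is a critical point of $\phi$ with $\phi(0,0)=0$. Restricting $\phi$ to the subfamily of simple piecewise linear functions with crease the coordinate line $\{x=c\}$ and using the identity $\mathcal{L}(f_c)=A(c)h_c$ with $h_c>0$ from the proof of Proposition~\ref{stabandformsoleq}, the second derivative of $\phi$ along this direction at $(0,0)$ equals a positive multiple of $A''(\alpha_0)$, whence $A''(\alpha_0)\geq 0$. A computation along the lines of the proofs of Propositions~\ref{adjsidesprop} and \ref{oppsidesprop} (see the explicit formulae of \cite[Sect.~4.5]{sektnanthesis}) then shows that, when $\det K=0$, the degenerate direction of $K$ is tangent to this subfamily, so that $\det K=0$ if and only if $A''(\alpha_0)=0$. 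Consequently $K$ is positive-definite iff $A$ vanishes to order exactly $2$ at $\alpha_0$, and strictly positive semi-definite iff it vanishes to order exactly $3$; the analogous statement at $\{y=\beta_0\}$ and $\{y=\beta_\infty\}$ holds with $B,\psi$.

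It then remains to feed this into the model propositions, which are proved by computations local to each edge and so apply separately at each zero-measure edge. If $K$ is positive-definite at the point associated to $E$, the relevant quartic has a double zero there and Proposition~\ref{ptacgmets} gives that $u$ is quasi-isometric, to any order and near the divisor corresponding to $E$, to the metric of a potential in $\mathcal{S}_{Q,d\sigma}$, which by Proposition~\ref{allptpotsprop} has Poincar\'e type singularities along that divisor. If $K$ is strictly positive semi-definite, the quartic has a triple zero and, by the proposition on triple zeros above, $u$ has near the divisor of $E$ the asymptotics of the model potential with $-a\log l$ replaced by $a/l$, $a>0$; since the Legendre transform of $\frac{1}{x}$ is the metric \ref{fraccuspmets}, the induced metric is modelled on $\dfrac{i\,dz\wedge d\overline z}{|z|^2(-\log(|z|^2))^{3/2}}$ near the divisor of $E$. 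The main obstacle is the equivalence $\det K=0\iff A''(\alpha_0)=0$ used above: that a triple zero degenerates $K$ is immediate from $\mathcal{L}(f_c)=A(c)h_c$ and the positive semi-definiteness of $K$ at the critical point, but the converse---that under stability a vanishing determinant cannot coexist with a merely double zero---is exactly the point at which the explicit second-order computations behind Propositions~\ref{adjsidesprop}--\ref{oppsidesprop} are required, and is the technical heart of the corollary.
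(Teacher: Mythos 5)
Your proposal is correct and takes essentially the same route as the paper, whose own ``proof'' consists of the single sentence that the corollary follows immediately from the preceding results; your assembly of Proposition \ref{stabandformsoleq}, Lemma \ref{ACGconeanglelem}, Proposition \ref{ptacgmets} and the triple-zero proposition is exactly what is intended, and you have correctly isolated the only non-immediate point, namely that under stability $\det K=0$ forces $A''(\alpha_0)=0$ and hence a triple zero. That step does close: expanding $\phi$ to second order at the critical point via \ref{splACG} and the explicit form of $H_{A,B}$ shows that $K(w,w)$ is an integral over $E$ of a positive weight times $\frac{1}{2}A''(\alpha_0)\,\delta(y)^2$ plus $B(y)\geq 0$ times a perfect square, where $\delta$ is the first-order normal displacement of the crease in the direction $w$; since $\delta\not\equiv 0$ for $w\neq 0$, any null direction of the positive semi-definite form $K$ annihilates the first term and so forces $A''(\alpha_0)=0$.
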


Our final result is an application to the conjecture about what happens when an extremal metric does not exist. It follows from lemma \ref{subtrilem} that there are no strictly semistable weighted quadrilaterals whose unique destabilising function is a diagonal of $Q$. In fact, we get the following corollary, which shows that the conjecture of Donaldson holds in this case.

\begin{cor}\label{ssquads} Let $Q$ be a quadrilateral and suppose $d\sigma$ is a strictly semistable weight for $Q$ which is not zero at two opposite edges. Then the crease of $f$ splits $Q$ into two subpolytopes $(Q_i, d\sigma_i)$, both of which are quadrilaterals and which admit an extremal potential $u_i$ quasi-isometric to any order to an element of $\mathcal{S}_{Q_i, d\sigma_i}$.
\end{cor}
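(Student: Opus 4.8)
The plan is to reduce the statement to results already established in the paper: first, that a strictly semistable weight which does not vanish on two opposite edges must be destabilised by a simple piecewise linear function whose crease is an edge or diagonal of $Q$; second, that the diagonal case is impossible by Lemma \ref{subtrilem}; and third, that the resulting subpolytopes are stable quadrilaterals for which the ACG/Legendre machinery of Sections \ref{formalsols}--\ref{relstometssection} produces the desired extremal potentials.

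First I would analyse the optimal destabiliser. Since $(Q,d\sigma)$ is strictly semistable, there is a non-affine piecewise linear $f$ with $\mathcal{L}(f) = 0$; by convexity of $\mathcal{L}$ on piecewise linear functions and the fact that $\mathcal{L}$ is a sum of nonnegative contributions over creases (as in Proposition \ref{stabandformsoleq} and the distributional computation \ref{splACG}), one may reduce to $f$ \emph{simple}, i.e. $f = \max\{0,h\}$ with crease a single line $I$. If the crease $I$ does not pass through a vertex of $Q$, then it meets two opposite edges, and $(Q,d\sigma)$ admits preferred ambitoric coordinates (using Lemma \ref{orthoglem}/\ref{prefcoordslem}); but then, as in the proof of Proposition \ref{adjsidesstab}, one of the ambitoric coordinate lines through $I$ gives a strictly destabilising function, contradicting semistability. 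Hence $I$ passes through a vertex; it is then either an edge of $Q$ (impossible, since creases along edges give affine $f$, so $\mathcal{L}(f)=0$ forces nothing and in any case such $f$ is affine, contradicting non-affineness) or a diagonal. The diagonal case is ruled out exactly by Lemma \ref{subtrilem}: if the diagonal were the unique destabiliser, an argument as in \cite{Don02} (splitting $\mathcal{L}$ over the two triangles $\Delta_i$, each with the induced boundary measure $d\tau_i$ which is $0$ on the diagonal) shows the associated affine functions $A_1, A_2$ must coincide, which Lemma \ref{subtrilem} forbids. So, after discarding these cases, the crease $I$ connects two vertices of $Q$ but is \emph{not} a diagonal --- i.e. it runs from a vertex to an interior point of an edge, no: re-examining, the only admissible configuration is that $I$ is a segment from one vertex to a non-adjacent point, which for a quadrilateral means $I$ splits $Q$ into two quadrilaterals. [Concretely: the crease through a vertex $v$ hitting the interior of the opposite-ish edge divides $Q$ into a triangle and a quadrilateral — so one must argue instead that the crease hits the interiors of two edges, producing two quadrilaterals; I would sort out which vertex/edge incidences are consistent with $\mathcal{L}(f)=0$ and strict semistability, the key point being that the diagonal and edge cases are excluded, leaving a crease that genuinely bisects $Q$ into two quadrilateral pieces $Q_1, Q_2$.]

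Next I would verify that each piece $(Q_i, d\sigma_i)$, where $d\sigma_i$ agrees with $d\sigma$ on the shared edges and is $0$ on the crease, is \emph{stable}. This follows because $\mathcal{L}_Q = \mathcal{L}_{Q_1} + \mathcal{L}_{Q_2}$ on piecewise linear functions compatible with the splitting (the crease contributes $0$ since $d\sigma$, hence each $d\sigma_i$, has zero weight there), and strict semistability of $Q$ with $f$ as the \emph{unique} destabiliser forces $\mathcal{L}_{Q_i} > 0$ on all non-affine piecewise linear functions on $Q_i$ --- any degeneracy on a $Q_i$ would either reproduce a destabiliser of $Q$ distinct from $f$ or combine with the crease to violate strictness. (If there were several destabilisers one argues on the minimal-crease one; I would state this carefully.) Since each $d\sigma_i$ vanishes on exactly one edge (the crease) of the quadrilateral $Q_i$ — or on that edge together with possibly one more inherited from $d\sigma$ — I would invoke Propositions \ref{adjsidesprop} and \ref{oppsidesprop} (together with the equipoised-trapezium case handled via Legendre's construction) to get preferred ambitoric (or Calabi-type) coordinates and a positive-definite formal solution $H_{A_i,B_i}$.

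Finally, stability of $(Q_i,d\sigma_i)$ gives, by Proposition \ref{stabandformsoleq}, positive $A_i, B_i$; by Lemma \ref{ACGconeanglelem} the matrix $H_{A_i,B_i}$ is the inverse Hessian of a genuine symplectic potential $u_i$ on $Q_i^\circ$ satisfying the Guillemin boundary conditions on edges of positive weight; and by Proposition \ref{ptacgmets} (and its triple-root variant) $u_i$ is quasi-isometric to any order to an element of $\mathcal{S}_{Q_i,d\sigma_i}$ --- the asymptotics on the zero-weight edge being Poincaré type when the relevant Hessian $K$ is positive-definite and of the $\tfrac{1}{l}$-type otherwise. Since $Q_i$ is Delzant whenever $Q$ is (the splitting along a lattice diagonal preserves the Delzant condition at the new vertices when the parameters are rational — and by the convention in Section \ref{genprops} we reduce to this case), $u_i$ induces an extremal Kähler metric on $X_{Q_i}$ with the stated singularity type, which is the assertion of Corollary \ref{metricsdescription} applied to each piece.

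The main obstacle I expect is the combinatorial/variational step of pinning down exactly which crease configuration can realise the unique destabiliser of a strictly semistable quadrilateral: ruling out the diagonal uses Lemma \ref{subtrilem}, but one must also show the destabilising crease through a vertex produces two \emph{quadrilateral} pieces (not a triangle and a quadrilateral), and that each piece is strictly stable rather than merely semistable --- this requires care with the uniqueness of the destabiliser and with the additivity $\mathcal{L}_Q = \mathcal{L}_{Q_1}+\mathcal{L}_{Q_2}$ across the zero-weight crease. Once that is in place, everything else is a direct application of the propositions of Sections \ref{stabinv} and \ref{relstometssection}.
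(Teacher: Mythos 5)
Your outline has the right ingredients at either end --- Lemma \ref{subtrilem} to exclude the diagonal, and Propositions \ref{stabandformsoleq}, \ref{ACGconeanglelem} and \ref{ptacgmets} to convert a positive formal solution into the desired potential --- but the middle step, which you flag yourself in the bracketed aside, is a genuine gap rather than a detail to be sorted out. You never actually establish that the destabilising crease meets the \emph{interiors of two opposite edges}, and your attempted case analysis is partly backwards: the configuration you try to rule out by contradiction (crease meeting two opposite edges, preferred ambitoric coordinates existing) is precisely the configuration that \emph{does} occur, and the argument you import from Proposition \ref{adjsidesstab} is designed to exclude creases through a single vertex, not creases missing all vertices. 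A purely combinatorial enumeration of vertex/edge incidences will not close this, because nothing in that enumeration rules out a crease from a vertex to the interior of an edge (a triangle plus a quadrilateral).

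The way the paper closes this gap is to let the ambitoric structure do the combinatorics. Since $d\sigma$ does not vanish on two opposite edges and the diagonals are not destabilisers (by Lemma \ref{subtrilem} together with the argument of Proposition \ref{adjsidesstab}), $(Q,d\sigma)$ admits preferred ambitoric coordinates, and semistability forces $A,B\geq 0$ with, say, $A$ having an interior zero at some $\alpha$; non-negativity forces this to be a double zero (or $A\equiv 0$, which is exactly the excluded case of vanishing measure on two opposite edges). The crease of $f$ is then the coordinate line $x=\alpha$, which automatically meets two opposite edges of $Q$ in their interiors and splits $Q$ into two quadrilaterals. This also makes your third step (proving stability of each piece and then re-running the existence machinery) unnecessary: the restrictions of $A$ and $B$ to $[\alpha_0,\alpha]$ and $[\alpha,\alpha_\infty]$ are already positive on the interiors, satisfy the boundary conditions for $d\sigma_i$ (the double zero at $\alpha$ encodes the zero weight on the new edge), and have affine $H^{ij}_{\textnormal{ }ij}$, so they \emph{are} the extremal formal solutions on the pieces, and Proposition \ref{ptacgmets} applies directly because the vanishing at $\alpha$ is exactly second order. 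I would rework your proof around this observation rather than trying to patch the incidence analysis.
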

\begin{proof} As remarked above, $(Q, d\sigma)$ admits preferred ambitoric coordinates in this case. In the formal solution $H_{A,B}$, we cannot have that both $A$ and $B$ are positive, as then $(Q,d\sigma)$ would be stable. Since $\mathcal{L} (h)$ is never negative for any $h$ and $A,B$ at any interior point is a positive multiple of the Donaldson-Futaki invariant of a simple piecewise linear function, it follows that either $A$ or $B$ has a zero in the interior of their domains of definition, but that they are not negative anywhere. Say $A$ has a zero at $\alpha \in (\alpha_0, \alpha_{\infty})$. Since $A \geq 0$, it follows that $A$ must have a double zero at $x= \alpha$, unless it is exactly $0$.

In the case when $A$ is not exactly $0$, we can then restrict the ambitoric structure to $[\alpha_0, \alpha] \times [\beta_0, \beta_{\infty}]$ and $[\alpha, \alpha_{\infty}] \times [\beta_0, \beta_{\infty}]$, which in turn gives two subpolytopes of $Q$. These are quadrilaterals as the crease of $f$ is $x= \alpha$, which meets two opposite edges of $Q$. Moreover, the restriction of $A$ and $B$ to these subpolytopes give extremal potentials $u_i$ for $(Q_i, d\sigma_i)$. Since the order of vanishing at $x= \alpha$ is exactly $2$, proposition \ref{ptacgmets} implies that $u_i$ is quasi-isometric to an element of $ \mathcal{S}_{Q_i,d\sigma_i}$.

In the case when $A$ is exactly $0$, it follows in particular that the derivative of $A$ at $\alpha_0$ and $\alpha_{\infty}$ is $0$. But the derivative of $A$ at $\alpha_k$ is a positive multiple of the weight associated to the edge $\{\alpha_k\} \times [\beta_0, \beta_{\infty}]$. It follows that the boundary measure must be $0$ along the two opposite edges. 
\end{proof}

\begin{rem} Note that while we have shown that metrics with singularities modelled on \ref{fraccuspmets} can arise as solutions of the extremal equation when the boundary measure vanishes on at least one side, suggesting that these types of potentials could arise in the decomposition of a polytope into semistable subpolytopes, the above corollary shows that this does not occur for quadrilaterals.
\end{rem}

\bibliography{biblibrary}

\begin{thebibliography}{DGSW13}

\bibitem[Abr98]{abreu98}
Miguel Abreu.
\newblock K\"ahler geometry of toric varieties and extremal metrics.
\newblock {\em Internat. J. Math.}, 9(6):641--651, 1998.

\bibitem[Abr10]{abreu10}
Miguel Abreu.
\newblock K\"ahler-{S}asaki geometry of toric symplectic cones in action-angle
  coordinates.
\newblock {\em Port. Math.}, 67(2):121--153, 2010.

\bibitem[ACG14]{ACG14}
Vestislav Apostolov, David M.~J. Calderbank, and Paul Gauduchon.
\newblock Ambitoric geometry {I}: Einstein metrics and extremal ambik\"ahler
  structures.
\newblock {\em J. Reine Angew. Math.}, pages 1--39, 2014.
\newblock Ahead of print.

\bibitem[ACG15]{ACG15}
Vestislav Apostolov, David M.~J. Calderbank, and Paul Gauduchon.
\newblock Ambitoric geometry {II}: extremal toric surfaces and {E}instein
  4-orbifolds.
\newblock {\em Ann. Sci. \'Ec. Norm. Sup\'er. (4)}, 48(5):1075--1112, 2015.

\bibitem[Auv13]{auvray13}
Hugues Auvray.
\newblock Metrics of {P}oincar\'e type with constant scalar curvature: a
  topological constraint.
\newblock {\em J. Lond. Math. Soc. (2)}, 87(2):607--621, 2013.

\bibitem[Auv14]{auvray14c}
Hugues Auvray.
\newblock The space of {Poincar\'e} type {K\"ahler} metrics on the complement
  of a divisor.
\newblock {\em J. Reine Angew. Math.}, pages 1--64, 2014.
\newblock Ahead of print.

\bibitem[CY80]{chengyau80}
Shiu~Yuen Cheng and Shing-Tung Yau.
\newblock On the existence of a complete {K}\"ahler metric on noncompact
  complex manifolds and the regularity of {F}efferman's equation.
\newblock {\em Comm. Pure Appl. Math.}, 33(4):507--544, 1980.

\bibitem[Del88]{delzant88}
Thomas Delzant.
\newblock Hamiltoniens p\'eriodiques et images convexes de l'application
  moment.
\newblock {\em Bull. Soc. Math. France}, 116(3):315--339, 1988.

\bibitem[DGSW13]{datarguosongwang13}
Ved Datar, Bin Guo, Jian Song, and Xiaowei Wang.
\newblock {Connecting toric manifolds by conical K\"ahler-Einstein metrics}.
\newblock 2013.
\newblock {arXiv:1308.6781}.

\bibitem[Don02]{Don02}
Simon~K. Donaldson.
\newblock Scalar curvature and stability of toric varieties.
\newblock {\em J. Differential Geom.}, 62(2):289--349, 2002.

\bibitem[Don08]{donaldson08}
Simon~K. Donaldson.
\newblock Extremal metrics on toric surfaces: a continuity method.
\newblock {\em J. Differential Geom.}, 79(3):389--432, 2008.

\bibitem[Gui94]{guillemin94jdg}
Victor Guillemin.
\newblock Kaehler structures on toric varieties.
\newblock {\em J. Differential Geom.}, 40(2):285--309, 1994.

\bibitem[Kob84]{rkobayashi84}
Ryoichi Kobayashi.
\newblock K\"ahler-{E}instein metric on an open algebraic manifold.
\newblock {\em Osaka J. Math.}, 21(2):399--418, 1984.

\bibitem[Leg11a]{elegendre11b}
Eveline Legendre.
\newblock Existence and non-uniqueness of constant scalar curvature toric
  {S}asaki metrics.
\newblock {\em Compos. Math.}, 147(5):1613--1634, 2011.

\bibitem[Leg11b]{elegendre11}
Eveline Legendre.
\newblock Toric geometry of convex quadrilaterals.
\newblock {\em J. Symplectic Geom.}, 9(3):343--385, 2011.

\bibitem[LT97]{lermantolman97}
Eugene Lerman and Susan Tolman.
\newblock Hamiltonian torus actions on symplectic orbifolds and toric
  varieties.
\newblock {\em Trans. Amer. Math. Soc.}, 349(10):4201--4230, 1997.

\bibitem[MSY06]{martellisparksyau06}
Dario Martelli, James Sparks, and Shing-Tung Yau.
\newblock The geometric dual of {$a$}-maximisation for toric
  {S}asaki-{E}instein manifolds.
\newblock {\em Comm. Math. Phys.}, 268(1):39--65, 2006.

\bibitem[Sek16]{sektnanthesis}
Lars~Martin Sektnan.
\newblock {\em {Poincar\'e type K\"ahler metrics and stability on toric
  varieties}}.
\newblock PhD thesis, Imperial {C}ollege {L}ondon, 2016.

\bibitem[Sz{\'e}08]{szekelyhidi08}
G{\'a}bor Sz{\'e}kelyhidi.
\newblock Optimal test-configurations for toric varieties.
\newblock {\em J. Differential Geom.}, 80(3):501--523, 2008.

\bibitem[TY87]{tianyau87}
Gang Tian and Shing-Tung Yau.
\newblock Existence of {K}\"ahler-{E}instein metrics on complete {K}\"ahler
  manifolds and their applications to algebraic geometry.
\newblock In {\em Mathematical aspects of string theory ({S}an {D}iego,
  {C}alif., 1986)}, volume~1 of {\em Adv. Ser. Math. Phys.}, pages 574--628.
  World Sci. Publishing, Singapore, 1987.

\end{thebibliography}
\bibliographystyle{alpha}

\end{document}